\title{Derived Complete Complexes at Weakly Proregular Ideals}
\author{Amnon Yekutieli}
\date{3 August 2024. {\em Version}: 4.}
\address{Department of  Mathematics,
Ben Gurion University, Be'er Sheva 84105, Israel.}
\email{\href{mailto:amyekut@gmail.com}{amyekut@gmail.com}}
\urladdr{\url{https://sites.google.com/view/amyekut-math/home}}
\dedicatory{To the memory of my dear friend Sahar Mientakevitch}
\newtheorem{thm}[equation]{Theorem}
\newtheorem{cor}[equation]{Corollary}
\newtheorem{prop}[equation]{Proposition}
\newtheorem{lem}[equation]{Lemma}
\theoremstyle{definition}
\newtheorem{dfn}[equation]{Definition}
\newtheorem{rem}[equation]{Remark}
\newtheorem{conv}[equation]{Convention}
\numberwithin{equation}{section}
\newcommand{\iso}{\xrightarrow{%
\smash{\raisebox{-0.5ex}{\ensuremath{\scriptstyle \simeq  \mspace{2mu}}}}}} 
\newcommand{\xar}{\xrightarrow}
\newcommand{\lto}{\leftarrow}
\newcommand{\sub}{\subseteq}
\newcommand{\opn}{\operatorname}
\newcommand{\cat}[1]{\operatorname{\mathsf{#1}}}
\newcommand{\cd}{\mspace{1.3mu}{\cdotB}\mspace{1.3mu}}
\newcommand{\rmitem}[1]{\item[\textrm{(#1)}]}
\newcommand{\mfrak}[1]{\mathfrak{#1}}
\newcommand{\mrm}[1]{\mathrm{#1}}
\newcommand{\Ga}{\Gamma}
\newcommand{\si}{\sigma}
\newcommand{\de}{\delta}
\newcommand{\al}{\alpha}
\newcommand{\ga}{\gamma}
\newcommand{\La}{\Lambda}
\renewcommand{\a}{\mfrak{a}}
\renewcommand{\b}{\mfrak{b}}
\renewcommand{\c}{\mfrak{c}}
\newcommand{\ba}{\bsym{a}}
\newcommand{\bb}{\bsym{b}}
\newcommand{\bm}{\bsym{m}}
\newcommand{\bn}{\bsym{n}}
\newcommand{\K}{\mathbb{K}}
\newcommand{\Z}{\mathbb{Z}}
\newcommand{\N}{\mathbb{N}}
\newcommand{\tup}[1]{\textrm{#1}}
\newcommand{\bsym}[1]{\boldsymbol{#1}}
\newcommand{\boplus}{\bigoplus\nolimits}
\newcommand{\ot}{\otimes}
\newcommand{\what}[1]{\widehat{#1}}
\newcommand{\wh}[1]{\widehat{#1}}
\renewcommand{\d}{\mathrm{d}}
\newcommand{\lb}{\linebreak}
\newcommand{\msp}[1]{\mspace{#1 mu}}
\begin{document}

\begin{abstract}
Weak proregularity of an ideal in a commutative ring is a subtle
generalization of the noetherian property of the ring.
Weak proregularity is of special importance for the study of derived completion,
and it occurs quite often in non-noetherian rings arising in Hochschild
and prismatic cohomologies.

This paper is about several related topics: adically flat modules,
recognizing derived complete complexes, the structure of the category of derived
complete complexes, and a derived complete Nakayama theorem -- all with
respect to a weakly proregular ideal; and the preservation
of weak proregularity under completion of the ring.
\end{abstract}

\maketitle

\tableofcontents

\setcounter{section}{-1}
\section{Introduction}

The {\em weak proregularity} (WPR) property of an ideal $\a$ in a commutative
ring $A$ was discovered by Grothendieck in 1967 \cite{LC}, without naming it.
A finite sequence $(a_1, \ldots, a_n)$ of elements of $A$ is called a {\em
weakly proregular sequence} if its Koszul cohomology satisfies a rather
complicated asymptotic condition (see Definition \ref{dfn:255} below).
An ideal $\a$ in $A$ is called WPR if it is generated by some WPR sequence.
Grothendieck proved that when $A$ is a noetherian
ring, every ideal in it is WPR. Moreover, he proved that the WPR property of
$\a$ is sufficient for the {\em derived $\a$-torsion} (the algebraic version of
cohomology with supports) to behave like in the noetherian case.

Thirty years later, weak proregularity was brought back into active research
by Alonso, Jeremias and Lipman \cite{AJL}, and they also coined the
name (in the Erratum of their paper). There was much
subsequent work by other authors, including the proof that the {\em
MGM Equivalence} holds for WPR ideals \cite{PSY1}.
The relation between WPR and {\em adic flatness} (a slight
weakening of the flatness condition) was addressed in \cite{Ye4},
and a noncommutative generalization of WPR was studied in \cite{VY}.
Further developments can be found in the papers \cite{Ye6} and \cite{Po2}.
To summarize: These prior results showed that {\em the WPR
property of the ideal $\a$ is a very subtle generalization of the noetherian
property of the ring $A$}.

Several applications of the WPR property were found,
including to {\em Hochschild cohomology}, see \cite{Sh1}, \cite{Sh2} and
\cite{Sh3}; and to {\em perfectoid and prismatic theory}, see \cite{CS},
\cite{BS}, \cite{Ce}, \cite{DLMS}, \cite{It}, \cite{NS}, \cite{IKY} and
\cite{BMS}.

All rings in this paper are commutative (but of course they are not assumed
to be noetherian). In the Introduction we are going to present the main
theorems of the paper, with only minimal explanations. Sections \ref{sec:back}
and \ref{sec:back-der} contain a much more detailed review of the necessary
background material.

Let us begin with a few words on adic completion.
Let $A$ be a ring, and let $\a \sub A$ be a finitely generated ideal.
For each $k \in \N$ define the ring $A_k := A / \a^{k + 1}$.
Given an $A$-module $M$, we write
$M_k := A_k \ot_A M$. With this notation, the {\em $\a$-adic completion} of $M$
is $\La_{\a}(M) := \lim_{\lto k} \msp{1} M_k$.
This is an $A$-linear functor from the category  $\cat{M}(A)$
of $A$-modules to itself (usually not exact on either side).
The setting above will be retained throughout the Introduction.

Let us now say a few words on {\em derived completion}.
The DG category of (unbounded)  complexes of $A$-modules
$M = \boplus_{i \in \Z} M^i$ is $\cat{C}(A)$.
The triangulated derived category of $A$-modules is $\cat{D}(A)$.
The $\a$-adic completion functor on modules extends to a DG functor
$\La_{\a} : \cat{C}(A) \to \cat{C}(A)$,
$\La_{\a}(M) := \boplus_{i \in \Z} \La_{\a}(M^i)$,
and there is a left derived functor
$\mrm{L} \La_{\a} : \cat{D}(A) \to \cat{D}(A)$.
A complex $M \in \cat{D}(A)$ is called {\em derived $\a$-adically complete} if
the canonical morphism $M \to \mrm{L} \La_{\a}(M)$ in $\cat{D}(A)$ is an
isomorphism.

The definition above, sometimes called {\em derived completeness in the
idealistic sense}, is not the same as  {\em derived completeness in the
sequential sense}. The latter concept is the one appearing in many modern
texts, e.g.\ \cite{SP} and \cite{BS}.
Generally speaking, the idealistic derived completion is more directly related
to plain completion; whereas sequential derived completion is easier to work
with. This is explained in Section \ref{sec:back-der}.
By results in \cite{PSY1} and \cite{Po2}, the two notions of derived
completion agree if and only if the ideal $\a$ is WPR (see Theorem
\ref{thm:270}). Since we shall mostly consider derived completion at WPR ideals
in our paper, the distinction between the idealistic and the sequential derived
completion will usually not matter.

An $A$-module $P$ is called {\em $\a$-adically flat} (see
\cite[Definition 4.2]{Ye2}), or
{\em $\a$-completely flat} (see \cite[Section 1.2]{BS}), if for every
$\a$-torsion $A$-module $N$ and every $i > 0$ we have $\opn{Tor}^A_i(P, N) = 0$.
It is known that the following three conditions are equivalent:
\begin{enumerate}
\item[$\rhd$] The $A$-module $P$ is $\a$-adically flat.

\item[$\rhd$]  For every $k \geq 0$ and every $i > 0$ the module
$\opn{Tor}^{\, A}_i(A_k, P)$ vanishes, and $P_k$ is a flat $A_k$-module.

\item[$\rhd$]  For every $i > 0$ the module
$\opn{Tor}^{\, A}_i(A_0, P)$ vanishes, and $P_0$ is a flat $A_0$-module.
\end{enumerate}
This holds without any finiteness assumptions on $A$, $\a$ or $P$.
See \cite[Theorem 4.3]{Ye4}, \cite[Section 4.1]{BMS} or
\cite[Section 1.2]{BS}.

When the ideal $\a$ is WPR and the module $P$ is complete, we can say more:

\begin{thm} \label{thm:501}
Let $A$ be a ring, let $\a$ be a weakly proregular ideal in $A$,
and let $P$ be an $\a$-adically complete $A$-module. For $k \geq 0$ define the
ring $A_k := A / \a^{k + 1}$ and the $A_k$-module $P_k := A_k \ot_A P$.
Then the following three conditions are equivalent:
\begin{enumerate}
\rmitem{i} The $A$-module $P$ is $\a$-adically flat.

\rmitem{ii} The functor $P \ot_{A} (-)$ is exact on $\a$-torsion $A$-modules.

\rmitem{iii} For every $k \geq 0$ the $A_k$-module $P_k$ is flat.
\end{enumerate}
\end{thm}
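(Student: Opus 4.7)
I would establish the cycle (i) $\Rightarrow$ (ii) $\Rightarrow$ (iii) $\Rightarrow$ (i), in which only the last implication actually uses WPR and the completeness of $P$.

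The implication (i) $\Rightarrow$ (ii) is immediate from the definition of $\a$-adic flatness, which already gives $\opn{Tor}^A_1(P, N) = 0$ for every $\a$-torsion $A$-module $N$, equivalently exactness of $P \otimes_A (-)$ on $\a$-torsion modules. For (ii) $\Rightarrow$ (iii) I would use that every $A_k$-module $M$ is $\a$-torsion over $A$, together with the canonical identification $P \otimes_A M \cong P_k \otimes_{A_k} M$; exactness of $P \otimes_A (-)$ on $\a$-torsion modules then specializes to exactness of $P_k \otimes_{A_k} (-)$ on $A_k$-modules, i.e.\ to flatness of $P_k$ over $A_k$. Neither of these two steps needs WPR or the completeness of $P$.

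The heart of the theorem is (iii) $\Rightarrow$ (i). By the criterion recalled just before the statement, it suffices to show $\opn{Tor}^A_i(A_0, P) = 0$ for every $i > 0$, since the flatness of $P_0$ over $A_0$ is already part of (iii). The plan is to exploit WPR through the sequential formula for derived completion: for WPR $\a$, one has $\mrm{L}\La_\a(M) \simeq \mrm{R}\lim_k (A_k \otimes^{\mrm{L}}_A M)$ in $\cat{D}(A)$ for every $M$. Coupling this with Theorem~\ref{thm:270} and the standard fact that, under WPR, $\a$-adically complete modules are derived complete, the canonical map $P \to \mrm{R}\lim_k (A_k \otimes^{\mrm{L}}_A P)$ is a quasi-isomorphism. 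Writing $C_k := A_k \otimes^{\mrm{L}}_A P$, this says $\mrm{R}\lim_k C_k$ is the module $P$ concentrated in degree $0$, while (iii) records that $\opn{H}^0(C_k) = P_k$ is flat over $A_k$.

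From here I would run an induction along the tower $\{C_k\}$, using the fiber triangles $(\a^{k+1}/\a^{k+2}) \otimes^{\mrm{L}}_A P \to C_{k+1} \to C_k$ and the WPR-driven Mittag--Leffler control on the tower, to force $\opn{H}^{-i}(C_k) = \opn{Tor}^A_i(A_k, P) = 0$ for every $i > 0$ and every $k$; specializing to $k = 0$ yields what is needed. The hard part is precisely this step: translating the vanishing of the derived inverse limit (which only records the combined behaviour of the tower) into pointwise vanishing of the higher cohomology of each $C_k$. I expect to have to invoke WPR a second time here, either through a Koszul/telescope resolution built from a WPR generating sequence $(a_1, \ldots, a_n)$ of $\a$ to carry out a dimension-shift, or by a direct pro-triviality argument ruling out $\opn{lim}^1$ obstructions in the derived limit.
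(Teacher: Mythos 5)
The easy implications (i) $\Rightarrow$ (ii) $\Rightarrow$ (iii) in your proposal match the paper's argument exactly, and you correctly identify that only (iii) $\Rightarrow$ (i) uses WPR and completeness of $P$.

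However, your treatment of (iii) $\Rightarrow$ (i) has a genuine gap, and you acknowledge it yourself: you say the hard part is converting vanishing of the derived inverse limit into pointwise vanishing of $\opn{Tor}^A_i(A_k,P)$, and you ``expect to have to invoke WPR a second time here,'' without actually supplying the argument. Two specific worries. First, your plan rests on the formula $\mrm{L}\La_{\a}(M) \simeq \mrm{R}\lim_k(A_k \ot^{\mrm{L}}_A M)$ for WPR $\a$. This identity is standard in the noetherian case, but for merely WPR ideals the canonical computations of $\mrm{L}\La_{\a}$ run through the telescope or infinite dual Koszul complex (formula (\ref{eqn:338})), not through the quotients $A_k$; passing from the Koszul tower to the $A_k$-tower for a non-flat $M$ is itself a nontrivial step that is not justified. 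Second, the induction you sketch along the fiber triangles $(\a^{k+1}/\a^{k+2}) \ot^{\mrm{L}}_A P \to C_{k+1} \to C_k$ runs in the wrong direction: controlling $\opn{H}^{<0}(C_{k+1})$ from $\opn{H}^{<0}(C_k)$ requires information about $(\a^{k+1}/\a^{k+2}) \ot^{\mrm{L}}_A P$, which is precisely the kind of Tor-vanishing you are trying to prove. The $\mrm{R}\lim$ being concentrated in degree $0$ does not by itself force each $C_k$ to be concentrated in degree $0$, and no mechanism is offered to make it do so.

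The paper closes (iii) $\Rightarrow$ (i) differently: it observes that condition (iii) says $\{P_k\}_{k\in\N}$ is a flat $\a$-adic system with $P \cong \lim_{\lto k} P_k$, and then invokes \cite[Theorem 6.9]{Ye4}, which asserts that the inverse limit of a flat $\a$-adic system over a WPR ideal is $\a$-adically flat. All the work you are trying to do from scratch is packaged in that cited theorem. If you want a self-contained argument, you would essentially need to reprove \cite[Theorem 6.9]{Ye4}; the tower-of-$C_k$'s plan as written does not do that.
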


Theorem \ref{thm:501} is repeated as Theorem \ref{thm:502} in Section
\ref{sec:ad-flat}, and proved there.

For a graded $A$-module $M = \bigoplus_{i \in \Z} M^i$, its supremum is
$\opn{sup}(M) := \opn{sup} \msp{2} \{ i \in \Z \mid M^i \neq 0 \}$.
For a complex of $A$-modules $M$ we have two values to consider:
$\opn{sup}(M)$, which neglects the differential, and
$\opn{sup}(\opn{H}(M))$.
The complex $M$ is called bounded above if $\opn{sup}(M) < \infty$,
and it is called cohomologically bounded above
if $\opn{sup}(\opn{H}(M)) < \infty$.

\begin{thm} \label{thm:436}
Let $A$ be a ring, let $\a$ be a finitely generated ideal in $A$,
and let $P$ be a complex of $\a$-adically flat $A$-modules.
Assume either of these two conditions hold: either the complex $P$ is bounded
above, or the ideal $\a$ is weakly proregular.
Then the canonical morphism
$\eta^{\mrm{L}}_{P} : \mrm{L} \La_{\a}(P) \to \La_{\a}(P)$
in $\cat{D}(A)$ is an isomorphism.
\end{thm}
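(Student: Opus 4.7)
The plan is to handle case (a), where $P$ is bounded above, directly by a K-flat resolution and a short exact sequence decomposition, then to bootstrap to case (b) using the finite cohomological dimension of $\mrm{L}\La_{\a}$ available under the WPR hypothesis.

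For case (a): pick a K-flat resolution $\phi \colon Q \to P$ with $Q$ a bounded above complex of flat $A$-modules, available because $P$ is bounded above. Then $\mrm{L}\La_{\a}(P) \cong \La_{\a}(Q)$ in $\cat{D}(A)$, so the task reduces to showing that the cone $C = \opn{cone}(\phi)$---a bounded above acyclic complex of $\a$-adically flat modules, since flats are $\a$-adically flat and $\a$-adic flatness is closed under direct sum---satisfies $\La_{\a}(C) \simeq 0$. The pivotal lemma is that $\La_{\a}$ is exact on short exact sequences of $\a$-adically flat modules: for $0 \to M' \to M \to M'' \to 0$ such a sequence, tensoring with $A_k$ stays exact because $\opn{Tor}_1^A(M'', A_k) = 0$, and passing to the inverse limit preserves exactness because the transitions in $\{M'_k\}_k$ are surjective so $\lim^1$ vanishes. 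A downward induction from $Z^{\opn{sup}(C) + 1} = 0$, via the long exact sequence of $\opn{Tor}$ applied to $0 \to Z^i \to C^i \to Z^{i + 1} \to 0$, shows that every cocycle submodule $Z^i \sub C^i$ remains $\a$-adically flat; splicing these short exact sequences and applying $\La_{\a}$ then yields the acyclicity of $\La_{\a}(C)$.

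For case (b): under WPR with $\a$ generated by $n$ elements, $\mrm{L}\La_{\a}$ has cohomological dimension $\leq n$; in particular, $\mrm{L}\La_{\a}$ sends $\cat{D}^{\geq a}(A)$ into $\cat{D}^{\geq a - n}(A)$ (this follows from the module-level bound $L_i \La_{\a}(-) = 0$ for $i > n$, extended through a standard hypercohomology spectral sequence on bounded below complexes). Fix $m \in \Z$ and set $N := m + n + 1$. Let $P'$ and $P''$ denote the stupid truncations of $P$ in degrees $\geq N + 1$ and $\leq N$, respectively; both are complexes of $\a$-adically flat modules, $P''$ is bounded above, and there is a degree-wise split short exact sequence $0 \to P' \to P \to P'' \to 0$ of complexes. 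Applying $\mrm{L}\La_{\a}$ and $\La_{\a}$ produces two distinguished triangles in $\cat{D}(A)$, connected by the natural transformation $\eta^{\mrm{L}}$. In the induced map of long exact sequences at $\opn{H}^m$, the contributions from $P'$ vanish both at $m$ and at $m + 1$: for $\La_{\a}(P')$ trivially, since it is supported in degrees $\geq N + 1 > m + 1$; and for $\mrm{L}\La_{\a}(P')$ by the cohomological dimension bound, giving $\mrm{L}\La_{\a}(P') \in \cat{D}^{\geq N + 1 - n}(A) = \cat{D}^{\geq m + 2}(A)$. Hence $\opn{H}^m(\eta^{\mrm{L}}_P) \cong \opn{H}^m(\eta^{\mrm{L}}_{P''})$, which is an isomorphism by case (a). Since $m$ was arbitrary, $\eta^{\mrm{L}}_P$ is an isomorphism in $\cat{D}(A)$. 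The principal obstacle I anticipate is the cohomological dimension bound on $\mrm{L}\La_{\a}$ in sufficient generality and the convergence of the hypercohomology spectral sequence on bounded below complexes; everything else is careful but essentially routine work with short and long exact sequences.
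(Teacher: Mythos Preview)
Your proposal is correct and follows essentially the same two-step strategy as the paper: for the bounded-above case, resolve by a bounded-above complex of flat (free) modules, show by downward induction that the cocycles of the acyclic cone are $\a$-adically flat, and splice to conclude that the completion of the cone is acyclic; for the WPR case, use the finite cohomological displacement of $\mrm{L}\La_{\a}$ together with stupid truncations to reduce to the bounded-above case. The only cosmetic differences are that the paper proves acyclicity of the completed cone by tensoring the whole cone with $A_k$ and then invoking Mittag--Leffler on the resulting inverse system of acyclic complexes (your ``$\La_{\a}$ is exact on short exact sequences of $\a$-adically flat modules'' packages the same Tor-vanishing plus surjective-transition argument), and that the paper obtains the cohomological displacement bound directly from the telescope description $\mrm{L}\La_{\a} \cong \opn{Hom}_A(\opn{Tel}(A;\ba),-)$ rather than via a hypercohomology spectral sequence, so your anticipated obstacle does not in fact arise.
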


Theorem \ref{thm:436} is repeated as Theorem
\ref{thm:115} in Section \ref{sec:ad-flat}, and proved there.

Let us denote by $\cat{D}(A)_{\tup{$\a$-com}}$ the full subcategory of
$\cat{D}(A)$ on the derived $\a$-adically complete complexes.
It is a triangulated category. Theorems \ref{thm:232} and \ref{thm:233}
below describe the structure of the category $\cat{D}(A)_{\tup{$\a$-com}}$.

A complex of $A$-modules $P$ is called {\em $\a$-adically semi-free} if
$P = \La_{\a}(P')$, the $\a$-adic completion a semi-free
complex of $A$-modules $P'$.

\begin{thm} \label{thm:232}
Let $A$ be a ring, let $\a$ be a weakly proregular ideal in $A$, and
let $M \in \cat{D}(A)$. The following three conditions are equivalent:
\begin{itemize}
\rmitem{i} $M$ is a derived $\a$-adically complete complex.

\rmitem{ii} There is an isomorphism $M \cong P$
in $\cat{D}(A)$, where $P$ is an $\a$-adically semi-free complex of
$A$-modules.

\rmitem{iii} There is an isomorphism $M \cong N$ in $\cat{D}(A)$,
where $N$ is a complex of $\a$-adically complete $A$-modules.
\end{itemize}
Moreover, when these equivalent conditions hold, the $\a$-adically semi-free
complex $P$ in condition \tup{(ii)} can be chosen such that
$\opn{sup}(P) = \opn{sup}(\opn{H}(M))$.
\end{thm}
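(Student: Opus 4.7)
The plan is to establish the cycle (i) $\Rightarrow$ (ii) $\Rightarrow$ (iii) $\Rightarrow$ (i), with the supremum refinement folded into (i) $\Rightarrow$ (ii). Setting $s := \opn{sup}(\opn{H}(M)) \in \Z \cup \{\pm \infty\}$, the case $s = -\infty$ is immediate: $M$ is acyclic and, being derived complete, is zero in $\cat{D}(A)$, so $P := 0$ works. Otherwise, the standard ``build from the top'' construction produces a semi-free resolution $\pi : Q \iso M$ in $\cat{C}(A)$ with $\opn{sup}(Q) = s$. The terms of $Q$ are free, hence $\a$-adically flat, so Theorem \ref{thm:436} together with weak proregularity yields an isomorphism $\mrm{L} \La_\a(Q) \iso \La_\a(Q)$ in $\cat{D}(A)$. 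Set $P := \La_\a(Q)$, which is $\a$-adically semi-free by definition; then $\mrm{L} \La_\a(M) \cong \mrm{L} \La_\a(Q) \cong P$, and derived completeness of $M$ upgrades this to an isomorphism $M \cong P$, establishing (ii). Because $\La_\a$ acts degreewise, $\opn{sup}(P) \leq \opn{sup}(Q) = s$, while the opposite inequality $s = \opn{sup}(\opn{H}(P)) \leq \opn{sup}(P)$ is automatic, forcing equality.

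For (ii) $\Rightarrow$ (iii), if $P = \La_\a(P')$ with $P'$ semi-free, then each component $P^i = \La_\a(P'^i)$ is the $\a$-adic completion of a free $A$-module. Since $\a$ is finitely generated, the completion of any $A$-module is itself $\a$-adically complete, so $P$ is already a complex of $\a$-adically complete modules and $N := P$ does the job.

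The main obstacle is (iii) $\Rightarrow$ (i): showing that any complex $N$ of $\a$-adically complete modules lies in $\cat{D}(A)_{\tup{$\a$-com}}$. Since $\a$ is weakly proregular, idealistic and sequential derived completion agree (Theorem \ref{thm:270}), so one can test derived completeness sequentially: $N$ is derived complete iff $\mrm{R} \lim (\cdots \xrightarrow{f} N \xrightarrow{f} N) \cong 0$ in $\cat{D}(A)$ for every $f$ in a weakly proregular sequence generating $\a$. Each $\a$-adically complete $N^i$ is $f$-adically separated and complete, so the tower $\cdots \xrightarrow{f} N^i \xrightarrow{f} N^i$ has vanishing $\lim$ and $\mrm{R}^1 \lim$. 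Transporting this componentwise vanishing to the complex $N$ --- via the hyper-$\mrm{R} \lim$ spectral sequence, or equivalently via the closure of $\cat{D}(A)_{\tup{$\a$-com}}$ under $\mrm{R} \lim$ (which is itself a consequence of WPR) --- yields the claim. The delicate point, when $N$ is unbounded, is precisely this passage from componentwise vanishing to complex-wise vanishing, and it is here that the WPR hypothesis is used essentially.
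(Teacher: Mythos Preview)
Your argument for (i) $\Rightarrow$ (ii) and (ii) $\Rightarrow$ (iii) matches the paper's (Theorem~\ref{thm:155} and the trivial direction of Theorem~\ref{thm:130}). One small point: for (i) $\Rightarrow$ (ii) the paper does not invoke Theorem~\ref{thm:436}; since $Q$ is semi-free it is already K-flat, so $\eta^{\mrm L}_Q$ is an isomorphism by the very construction of $\mrm L\La_\a$, with no need for WPR or adic flatness.

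For (iii) $\Rightarrow$ (i) you take a genuinely different route. The paper first proves the single-module case (Lemma~\ref{lem:146}: an $\a$-adically complete module is derived complete) by resolving via $\a$-adically free modules (Proposition~\ref{prop:145}) and applying Theorem~\ref{thm:115}; it then inducts on amplitude for bounded $N$, and for unbounded $N$ uses the finite cohomological displacement of $\mrm L\La_\a$ (Corollary~\ref{cor:440}) together with stupid truncations. Your approach instead passes through Theorem~\ref{thm:270} to the sequential picture, checks for each generator $f$ that the tower $\cdots\xrightarrow{f}N^i\xrightarrow{f}N^i$ has vanishing $\lim$ and $\mrm R^1\lim$ (which is correct: the relevant series $\sum_j f^j y_{k+j}$ converge in the $\a$-adic topology, and indeed $N^i$ is $f$-adically complete), and then observes that the resulting map $\prod_k N\to\prod_k N$ is a \emph{degreewise} isomorphism of complexes, hence $\mrm R\!\lim(\cdots\xrightarrow{f}N\xrightarrow{f}N)=0$. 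This last step is exactly the two-column double complex underlying your ``hyper-$\mrm R\!\lim$ spectral sequence'', and it works without any boundedness assumption on $N$; the Milnor-type sequence through $\opn H^q(N)$ would not, since $\opn H^q(N)$ need not be $\a$-adically complete.

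What each approach buys: yours is shorter and makes transparent that the unbounded case is no harder than the bounded one once one is in the sequential framework --- so your remark that WPR is ``used essentially'' at the unbounded passage is slightly off; WPR enters only via Theorem~\ref{thm:270} at the outset. The paper's approach stays inside the idealistic framework throughout and isolates Lemma~\ref{lem:146} as an independent statement, at the cost of a longer three-step argument with an explicit cohomological-dimension bound.
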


This theorem is a significant improvement upon \cite[Theorem 1.15]{PSY2},
in which the ring $A$ was assumed to be noetherian, and the complex $M$ was
assumed to have bounded above cohomology.
Theorem \ref{thm:232} is a combination of Theorems \ref{thm:155} and
\ref{thm:130} in the body of the paper, and the proofs are in Section
\ref{sec:recognize}.
See the end of the Introduction for a discussion of related work.

A complex of $A$-modules $P$ is called {\em $\a$-adically K-projective} if it
is a complex of $\a$-adically complete modules, and if for every acyclic complex
of $\a$-adically complete modules $N$, the complex
$\opn{Hom}_{A}(P, N)$ is acyclic.

\begin{thm} \label{thm:437}
Let $A$ be a ring, and let $\a$ be a weakly proregular ideal in $A$.
The following three conditions are equivalent for a complex $P$ of $\a$-adically
complete $A$-modules.
\begin{itemize}
\rmitem{i} $P$ is an $\a$-adically K-projective complex.

\rmitem{ii} There is a homotopy equivalence
$Q \to P$ in $\cat{C}_{\mrm{str}}(A)$, where $Q$ is an
$\a$-adically semi-free complex.

\rmitem{iii} For every complex of $\a$-adically complete $A$-modules
$M$, the canonical morphism
\[ \eta^{\mrm{R}}_{P, M} : \opn{Hom}_{A}(P, M) \to \opn{RHom}_{A}(P, M) \]
in $\cat{D}(A)$ is an isomorphism.
\end{itemize}
Furthermore, when these equivalent conditions hold, the $\a$-adically
semi-free complex $Q$ in condition {\rm (ii)} can be chosen such that
$\opn{sup}(Q) = \opn{sup}(\opn{H}(P))$.
\end{thm}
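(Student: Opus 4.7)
The plan is to establish the circle (ii)$\Rightarrow$(iii)$\Rightarrow$(i)$\Rightarrow$(ii). The linchpin of the whole argument is the assertion that any $\a$-adically semi-free complex is $\a$-adically K-projective. This I would verify in two steps: first, for an $\a$-adic completion of a free module $\La_{\a}(\boplus_{\alpha} A)$ and any $\a$-adically complete $N$, one has $\opn{Hom}_A(\La_{\a}(\boplus_{\alpha} A), N) \cong \prod_{\alpha} N$, and products preserve acyclicity of complexes of abelian groups; second, the telescope filtration by $\a$-adic semi-free subcomplexes lifts this vanishing to the whole complex via the long exact sequences of Hom.

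For (ii)$\Rightarrow$(iii), given a homotopy equivalence $Q\to P$ with $Q$ semi-free, the functor $\opn{Hom}_A(-,M)$ takes it to a homotopy equivalence, and by the cornerstone above $\opn{Hom}_A(Q,M)\to \opn{RHom}_A(Q,M)$ is an isomorphism in $\cat{D}(A)$; combining these with the isomorphism $\opn{RHom}_A(Q,M)\cong \opn{RHom}_A(P,M)$ (induced by the quasi-isomorphism $Q\to P$) gives the result. For (iii)$\Rightarrow$(i), apply the hypothesis to an acyclic complex $M$ of $\a$-adically complete modules: then $\opn{RHom}_A(P,M)=0$ in $\cat{D}(A)$, so $\opn{Hom}_A(P,M)$ is acyclic, which is precisely $\a$-adic K-projectivity.

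The main obstacle is (i)$\Rightarrow$(ii), and I would handle it as follows. Since $P$ is a complex of $\a$-adically complete modules, condition (iii) of Theorem \ref{thm:232} is satisfied, so that theorem yields an $\a$-adically semi-free complex $Q$ with an isomorphism $Q\cong P$ in $\cat{D}(A)$, satisfying the sharp bound $\sup(Q)=\sup(\opn{H}(P))$. Because semi-free complexes are ordinary K-projective, this derived isomorphism is represented by a chain-level quasi-isomorphism $\phi:Q\to P$. The task is to promote $\phi$ from a quasi-isomorphism to a homotopy equivalence. Let $C:=\opn{cone}(\phi)$. Then $C$ is acyclic; $C$ is a complex of $\a$-adically complete modules (since complete modules form an additive subcategory closed under finite direct sums); and $C$ is $\a$-adically K-projective, because the class of $\a$-adically K-projective complexes is closed under extensions in the triangle $Q\to P\to C\to Q[1]$, and both $Q$ (by the cornerstone) and $P$ (by hypothesis) belong to this class.

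It remains to show that an acyclic $\a$-adically K-projective complex $C$ is contractible, which I would do by the standard trick: the complex $\opn{Hom}_A(C,C)$ is acyclic by the definition of $\a$-adic K-projectivity applied with $M=C$, hence the cocycle $\opn{id}_C$ is a coboundary, yielding a contracting homotopy on $C$. Consequently $\phi$ is a homotopy equivalence, proving (ii) with the desired sup control carried over from Theorem \ref{thm:232}. The only delicate point that requires care is the extension-closure of $\a$-adic K-projectivity used above, and the standard verification from a short exact sequence of Hom complexes applies verbatim here.
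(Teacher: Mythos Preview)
Your argument for (ii)$\Rightarrow$(iii) is circular. You write that ``by the cornerstone above $\opn{Hom}_A(Q,M)\to\opn{RHom}_A(Q,M)$ is an isomorphism,'' but the cornerstone only says that $Q$ is $\a$-adically K-projective --- i.e., that $Q$ satisfies condition (i). What you are invoking is condition (iii) for $Q$, and (i)$\Rightarrow$(iii) is precisely part of what the theorem asserts. Concretely: $\a$-adic K-projectivity tells you that $\opn{Hom}_A(Q,N)$ is acyclic whenever $N$ is an acyclic complex of \emph{complete} modules; but to compare $\opn{Hom}_A(Q,M)$ with $\opn{RHom}_A(Q,M)$ you must resolve either $Q$ by an ordinary K-projective or $M$ by an ordinary K-injective, and neither resolution stays within complexes of complete modules, so $\a$-adic K-projectivity gives you nothing. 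The paper closes this gap by writing $Q=\La_{\a}(Q')$ with $Q'$ ordinary semi-free, noting $\opn{Hom}_A(Q,M)\cong\opn{Hom}_A(Q',M)\cong\opn{RHom}_A(Q',M)$, and then proving the nontrivial isomorphism $\opn{RHom}_A(Q',M)\cong\opn{RHom}_A(\La_{\a}(Q'),M)$. This last step genuinely uses WPR: one needs that $M$ is derived $\a$-adically complete (Theorem~\ref{thm:130}) and a result from \cite[Erratum, Theorem 9]{PSY1}.

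There is a related slip in your (i)$\Rightarrow$(ii). You claim that ``semi-free complexes are ordinary K-projective'' in order to lift the derived isomorphism $Q\cong P$ to a chain map. But $Q$ is $\a$-adically semi-free, meaning $Q=\La_{\a}(Q')$; completions of free modules are in general not projective $A$-modules --- indeed not even flat in the WPR setting, see \cite[Theorem~7.2]{Ye4} --- so $Q$ is not ordinary K-projective and the lifting argument fails. The paper avoids this by building the chain map directly: take an ordinary semi-free resolution $\rho:Q'\to P$; since $P$ is a complex of complete modules, $\La_{\a}(\rho):\La_{\a}(Q')\to\La_{\a}(P)=P$ is already a chain map $Q\to P$, and one checks via diagram~(\ref{eqn:407}) that it is a quasi-isomorphism (again using the derived completeness of $P$). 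After that your cone-contractibility argument, or equivalently Lemma~\ref{lem:405}, finishes the implication.
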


This is Theorem \ref{thm:401} in the body of the paper. The proof is in
Section \ref{sec:struct-cat-dc}.

The homotopy category of complexes of $A$-modules is $\cat{K}(A)$.
The categorical localization functor is
$\opn{Q} : \cat{K}(A) \to \cat{D}(A)$.

Denote by $\cat{K}(A)_{\tup{$\a$-sfr}}$ and
$\cat{K}(A)_{\tup{$\a$-kpr}}$ the full subcategories of $\cat{K}(A)$ on
the $\a$-adically semi-free and $\a$-adically K-projective complexes,
respectively. It is not hard to see that
$\cat{K}(A)_{\tup{$\a$-kpr}}$ is a full triangulated subcategory
of $\cat{K}(A)$; and Theorem \ref{thm:437} implies that
$\cat{K}(A)_{\tup{$\a$-sfr}}$ is also triangulated.
(See Remark \ref{rem:155} regarding the subtle difficulty with standard
cones of homomorphisms between $\a$-adically semi-free complexes.)

\begin{thm} \label{thm:233}
Let $A$ be a ring, and let $\a$ be a weakly proregular ideal in $A$.
Then the localization functor
$\opn{Q} : \cat{K}(A) \to \cat{D}(A)$
restricts to an equivalence of triangulated categories
$\opn{Q} : \cat{K}(A)_{\tup{$\a$-sfr}} \to \cat{D}(A)_{\tup{$\a$-com}}$.
\end{thm}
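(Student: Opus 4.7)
The plan is to prove the equivalence by verifying essential surjectivity and full faithfulness separately; the triangulated character of the restricted functor will then follow formally, since $\opn{Q}$ is triangulated and, as noted in the paragraph preceding the theorem, both $\cat{K}(A)_{\tup{$\a$-sfr}}$ and $\cat{D}(A)_{\tup{$\a$-com}}$ are full triangulated subcategories of their ambient categories. Essentially all of the substance has already been packaged into Theorems \ref{thm:232} and \ref{thm:437}, and the present statement is an assembly of those results.

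First I will check that $\opn{Q}$ really sends $\cat{K}(A)_{\tup{$\a$-sfr}}$ into $\cat{D}(A)_{\tup{$\a$-com}}$. If $P = \La_\a(P')$ for a semi-free complex $P'$, then every component of $P$ is the $\a$-adic completion of a free $A$-module, hence is $\a$-adically complete. Thus $P$ is a complex of $\a$-adically complete modules, and the implication (iii)$\Rightarrow$(i) of Theorem \ref{thm:232} forces $\opn{Q}(P) \in \cat{D}(A)_{\tup{$\a$-com}}$. Essential surjectivity is then an immediate reading of the implication (i)$\Rightarrow$(ii) of Theorem \ref{thm:232}: every derived $\a$-adically complete complex $M$ is isomorphic in $\cat{D}(A)$ to some $\a$-adically semi-free complex $P$, which is a preimage under $\opn{Q}$.

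For full faithfulness, given $P, P' \in \cat{K}(A)_{\tup{$\a$-sfr}}$, I need to show that the map
\[ \opn{Hom}_{\cat{K}(A)}(P, P') \to \opn{Hom}_{\cat{D}(A)}(P, P') \]
induced by $\opn{Q}$ is bijective. The key observation is that every $\a$-adically semi-free complex is $\a$-adically K-projective: the identity morphism $P \to P$ is a (trivial) homotopy equivalence in $\cat{C}_{\mrm{str}}(A)$ from a semi-free complex to $P$, so $P$ satisfies condition (ii) of Theorem \ref{thm:437}, and hence also condition (iii). Applying (iii) with target $P'$ (a complex of $\a$-adically complete modules) yields an isomorphism
\[ \opn{Hom}_A(P, P') \iso \opn{RHom}_A(P, P') \]
in $\cat{D}(A)$, and taking $\opn{H}^0$ recovers the desired bijection.

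The main obstacle, such as it is, lies in the auxiliary step of recognizing that an $\a$-adically semi-free complex is automatically $\a$-adically K-projective, so that condition (iii) of Theorem \ref{thm:437} is applicable. Once that is in hand, the argument is a clean assembly of the earlier results, and I would expect only a few lines of formal verification that the restricted $\opn{Q}$ commutes with translations and sends distinguished triangles to distinguished triangles.
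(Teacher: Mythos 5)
Your proposal is correct and matches the paper's proof of this statement (Theorem \ref{thm:135} in the body): essential surjectivity comes from Theorem \ref{thm:232} (i)$\Rightarrow$(ii), and full faithfulness from Theorem \ref{thm:437}(iii) applied to a pair of $\a$-adically semi-free complexes, followed by $\opn{H}^0$. The one cosmetic difference is that you derive ``$\a$-adically semi-free $\Rightarrow$ $\a$-adically K-projective'' by observing that the identity witnesses condition (ii) of Theorem \ref{thm:437}, whereas the paper has this as a standalone Proposition \ref{prop:400} (proved directly by reducing $\opn{Hom}_A(P,N)$ to $\opn{Hom}_A(P',N)$ via Proposition \ref{prop:440}(3)); your route is logically equivalent but circles through the heavier equivalence theorem, while the direct proposition is what actually underlies condition (ii) $\Rightarrow$ (i) there. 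You are also right to flag the triangulated-subcategory issue for $\cat{K}(A)_{\tup{$\a$-sfr}}$; the paper handles this in Proposition \ref{prop:430}.
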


This theorem is an improvement of \cite[Theorem 1.19]{PSY2}, where the ring $A$
was noetherian, and the the target category
was the subcategory $\cat{D}^{-}(A)_{\tup{$\a$-com}}$.
Theorem \ref{thm:233} is repeated as Theorem \ref{thm:135}  and
proved there.
See the end of the Introduction for a discussion of related work.

The next theorem is a cohomological variant of the Nakayama Lemma.

\begin{thm} \label{thm:207}
Let $A$ be a ring, let $\a$ be a weakly proregular ideal in $A$, and
define the rings $\what{A} := \La_{\a}(A)$ and $A_0 := A / \a$.
Let $M$ be a derived $\a$-adically complete complex of $A$-modules,
with $\opn{sup}(\opn{H}(M)) = i_0$ for some $i_0 \in \Z$.
Suppose there is a number $r \in \N$ such that
$\opn{H}^{i_0}(A_0 \ot^{\mrm{L}}_{A} M)$
is generated by $\leq r$ elements as an $A_0$-module.
Then $\opn{H}^{i_0}(M)$ is a generated by $\leq r$ elements as an
$\what{A}$-module.
\end{thm}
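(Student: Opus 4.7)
The plan is to pass to an $\a$-adically semi-free model of $M$ with controlled top degree, lift a set of generators of $\opn{H}^{i_0}(A_0 \ot^{\mrm{L}}_A M)$ to the model, and close the argument using a module-level derived Nakayama lemma. First, by Theorem \ref{thm:232} I would replace $M$, up to isomorphism in $\cat{D}(A)$, by an $\a$-adically semi-free complex $P$ with $\opn{sup}(P) = i_0$; in particular $P^{i_0+1} = 0$, and each $P^k = \La_\a(F^k)$ for some free $A$-module $F^k$. At a weakly proregular ideal, the completion of a free module is $\a$-adically flat, so $\opn{Tor}^A_i(A_0, P^k) = 0$ for all $i > 0$. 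Since $P$ is bounded above, a Cartan--Eilenberg spectral sequence argument shows that $A_0 \ot_A P$ represents $A_0 \ot^{\mrm{L}}_A M$ in $\cat{D}(A_0)$; combined with right exactness of $A_0 \ot_A (-)$ and $P^{i_0+1} = 0$, this yields
\[
\opn{H}^{i_0}\bigl( A_0 \ot^{\mrm{L}}_A M \bigr) \cong A_0 \ot_A N,
\]
where $N := \opn{H}^{i_0}(M) = P^{i_0}/\opn{d}(P^{i_0-1})$. The hypothesis thus says $A_0 \ot_A N$ is generated by $\leq r$ elements over $A_0$.

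I would then pick $n_1, \ldots, n_r \in N$ lifting a set of generators of $A_0 \ot_A N$. Since $P^{i_0-1}$ and $P^{i_0}$ are $\a$-adic completions of free $A$-modules, they carry canonical $\what{A}$-module structures, and the differential between them is continuous and hence $\what{A}$-linear; therefore $N$ inherits a canonical $\what{A}$-module structure, and the assignment $e_j \mapsto n_j$ defines an $\what{A}$-linear map $\phi : \what{A}^{\oplus r} \to N$. Set $N' := \opn{im}(\phi)$ and $N'' := N/N'$. Since $\a$ is weakly proregular, $\what{A}$ is derived $\a$-complete, and the top cohomology of a derived $\a$-complete complex is a derived $\a$-complete module (via the restriction of the standard t-structure to the derived complete subcategory), so both $\what{A}^{\oplus r}$ and $N$ are derived complete. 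The full subcategory of derived $\a$-complete $A$-modules is closed under kernels and cokernels in $\cat{M}(A)$, so $N'$ and $N''$ are derived complete as well.

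By construction the induced map $A_0^{\oplus r} \to A_0 \ot_A N$ is surjective, which forces $A_0 \ot_A N'' = 0$, i.e.\ $\a N'' = N''$. The final ingredient is the derived Nakayama lemma at a (weakly proregular, or more generally finitely generated) ideal: a derived $\a$-complete $A$-module $X$ with $\a X = X$ must vanish. Applied to $N''$, this yields $N'' = 0$, hence $N = N'$ is generated over $\what{A}$ by $n_1, \ldots, n_r$, as claimed.

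The main obstacle, as I see it, is the invocation of derived Nakayama in its underived module form --- concluding $N'' = 0$ from only the underived vanishing $\a N'' = N''$, rather than from the stronger $A_0 \ot^{\mrm{L}}_A N'' = 0$. This is a nontrivial but standard input at a finitely generated ideal; once it is granted, the remainder of the proof is routine manipulation of the top cohomology of a well-chosen semi-free representative together with the $\what{A}$-module structure it inherits.
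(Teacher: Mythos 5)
Your proof is correct, but it takes a genuinely different route to the crucial Nakayama step. The paper's proof (Theorem \ref{thm:104}) stays at the chain level: after replacing $M$ with a nonpositive $\a$-adically free complex $P$, it forms the surjection $\psi = \d \oplus \phi : P^{-1} \oplus \what{A}^{\oplus r} \to P^0$, shows $\psi$ is surjective mod $\a$, and invokes the Complete Nakayama Theorem \ref{thm:275} for plain $\a$-adically complete modules (which the paper proves from scratch via the usual successive-approximation argument). You instead descend to cohomology: you set $N := \opn{H}^{i_0}(M)$, form $N'' := N/\what{A}\langle n_1,\dots,n_r\rangle$, use Theorem \ref{thm:276}(2) (via the thick abelian subcategory $\cat{M}_{\tup{$\a$-dcom}}(A)$ of $\a$-contramodules) to recognize $N''$ as derived complete, and apply the contramodule Nakayama lemma --- a derived complete module with $\a X = X$ vanishes. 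Both work; the paper's version is more self-contained because Theorem \ref{thm:275} is elementary and proved in the text, whereas your version imports the contramodule Nakayama lemma (e.g.\ \cite[Tag 0G1U]{SP} or Positselski), which is standard but not established in the paper. Your approach is arguably conceptually cleaner because it handles everything at the level of $\opn{H}^{i_0}$, avoiding the choice of lifts $p_j \in P^0$ and the comparison of $\psi$ with the presentation (\ref{eqn:447}).

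Two small remarks. First, your claim that $A_0 \ot_A P$ ``represents $A_0 \ot^{\mrm{L}}_A M$'' is slightly stronger than what you need and is not literally justified if $P$ is unbounded below (you only need the top-degree identification $\opn{H}^{i_0}(A_0 \ot^{\mrm{L}}_A M) \cong A_0 \ot_A \opn{H}^{i_0}(M)$, which is exactly the derived K\"unneth trick \cite[Theorem 7]{Ye7} the paper cites, valid without any flatness hypothesis once $\opn{sup}(\opn{H}(M)) = i_0$). Second, you correctly flag the module-level derived Nakayama as the nontrivial ingredient; it does hold at a merely finitely generated ideal, so no weak proregularity is actually needed at that particular step.
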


This is repeated as Theorem \ref{thm:104} in the body of the paper.
The proof relies on Theorem \ref{thm:232}, and on the complete Nakayama theorem
for modules (Theorem \ref{thm:275}).
A weaker version of Theorem \ref{thm:207}
is \cite[Theorem 2.2]{PSY2}, where the ring $A$ was assumed to be noetherian.

\begin{cor} \label{cor:400}
In the setting of Theorem \ref{thm:207},
let $M$ and $N$ be derived $\a$-adically complete
complexes of $A$-modules, with
$\opn{sup}(\opn{H}(M)), \opn{sup}(\opn{H}(N)) \leq i_0$ for some
$i_0 \in \Z$.
Let $\phi : M \to  N$ be a morphism
in $\cat{D}(A)$.
The following two conditions are equivalent:
\begin{itemize}
\rmitem{i} The homomorphism
$\opn{H}^{i_0}(\phi) : \opn{H}^{i_0}(M) \to \opn{H}^{i_0}(N)$ is surjective.

\rmitem{ii} The homomorphism
\[ \opn{H}^{i_0}(\opn{id}_{A_0} \ot^{\mrm{L}}_{A} \msp{2} \phi) :
\opn{H}^{i_0}(A_0 \ot^{\mrm{L}}_{A} M) \to
\opn{H}^{i_0}(A_0 \ot^{\mrm{L}}_{A} N) \]
is surjective.
\end{itemize}
\end{cor}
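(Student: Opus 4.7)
The plan is to reduce the statement to a vanishing criterion on the mapping cone of $\phi$, and then to invoke the derived Nakayama theorem, Theorem \ref{thm:207}. First I would form a distinguished triangle
\[ M \xar{\phi} N \to C \to M[1] \]
in $\cat{D}(A)$. Because $\mrm{L} \La_{\a}$ is a triangulated functor and the completion morphism $\opn{id} \to \mrm{L} \La_{\a}$ is a morphism of triangulated functors, the full subcategory $\cat{D}(A)_{\tup{$\a$-com}}$ is closed under cones; hence $C$ is itself derived $\a$-adically complete.

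From $\opn{sup}(\opn{H}(M)), \opn{sup}(\opn{H}(N)) \leq i_0$, the long exact cohomology sequence shows $\opn{sup}(\opn{H}(C)) \leq i_0$, and the portion
\[ \opn{H}^{i_0}(M) \xar{\opn{H}^{i_0}(\phi)} \opn{H}^{i_0}(N) \to \opn{H}^{i_0}(C) \to \opn{H}^{i_0 + 1}(M) = 0 \]
identifies condition (i) with $\opn{H}^{i_0}(C) = 0$. Applying $A_0 \ot^{\mrm{L}}_{A}(-)$ to the same triangle, and using that this left derived functor preserves the cohomological upper bound $\leq i_0$ (resolve $M$ by a complex of flat $A$-modules concentrated in degrees $\leq i_0$), the analogous long exact sequence with $\opn{H}^{i_0 + 1}(A_0 \ot^{\mrm{L}}_{A} M) = 0$ identifies condition (ii) with $\opn{H}^{i_0}(A_0 \ot^{\mrm{L}}_{A} C) = 0$. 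The implication (i) $\Rightarrow$ (ii) is then immediate: (i) forces $\opn{sup}(\opn{H}(C)) \leq i_0 - 1$, and hence $\opn{sup}(\opn{H}(A_0 \ot^{\mrm{L}}_{A} C)) \leq i_0 - 1$ as well.

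For (ii) $\Rightarrow$ (i) I would argue by contradiction. Assume (ii), i.e.\ $\opn{H}^{i_0}(A_0 \ot^{\mrm{L}}_{A} C) = 0$, but suppose $\opn{H}^{i_0}(C) \neq 0$. Combined with $\opn{sup}(\opn{H}(C)) \leq i_0$, this forces $\opn{sup}(\opn{H}(C)) = i_0$, so $C$ satisfies the hypotheses of Theorem \ref{thm:207}. Applying that theorem with $r = 0$ yields that $\opn{H}^{i_0}(C)$ is generated by zero elements as an $\what{A}$-module, i.e.\ $\opn{H}^{i_0}(C) = 0$, contradicting the assumption. I do not foresee a serious obstacle: the corollary is essentially a mechanical transcription of the derived Nakayama theorem through the mapping cone, and the only thing to verify carefully is that $A_0 \ot^{\mrm{L}}_{A}(-)$ respects the upper cohomological bound, which follows from the existence of bounded-above flat resolutions.
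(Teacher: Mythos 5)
Your proof is correct and takes essentially the same approach as the paper: both pass to the mapping cone $C$ of $\phi$, show the two conditions are equivalent to $\opn{H}^{i_0}(C)=0$ and $\opn{H}^{i_0}(A_0\ot^{\mrm{L}}_A C)=0$ respectively, and then obtain the nontrivial implication from Theorem \ref{thm:207} applied with $r=0$. The only cosmetic difference is that for the easy direction the paper invokes the derived K\"unneth trick to identify $\opn{H}^{i_0}(A_0\ot^{\mrm{L}}_A C)$ with $A_0\ot_A\opn{H}^{i_0}(C)$, whereas you use the equally valid observation that $A_0\ot^{\mrm{L}}_A(-)$ preserves upper cohomological bounds.
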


The corollary is repeated as Corollary \ref{cor:412} and proved there.

There is a crucial difference between $\a$-adic completion in the noetherian
case and in the WPR case.
It is a classical fact that when $A$ is a noetherian ring, its
$\a$-adic completion $\what{A}$ is flat over $A$, and it is noetherian.
The flatness of $\what{A}$ over $A$ can fail when the ideal $\a$ is WPR but the
ring $A$ is not noetherian; see \cite[Theorem 7.2]{Ye4} for a counterexample.
The relevance of flatness is this: it is easy to prove (see Lemma
\ref{lem:510}) that if
$A \to B$ is a flat ring homomorphism, and $\a$ is a WPR ideal in $A$, then the
ideal $\b := B \cd \a$ in $B$ is WPR.

Nonetheless, we have:

\begin{thm} \label{thm:230}
Let $A$ be a ring, let $\a$ be a WPR ideal in $A$,
let $\what{A}$ be the $\a$-adic completion of $A$, and let
$\what{\a} := \what{A} \cd \a$, the ideal in $\what{A}$ generated by $\a$. Then
the ideal $\what{\a}$ is WPR.
\end{thm}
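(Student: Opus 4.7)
My plan is to exhibit a weakly proregular sequence generating $\what{\a}$. Let $\bs{a} = (a_1, \ldots, a_n)$ be a WPR sequence in $A$ generating $\a$; since $\what{\a} = \what{A} \cdot \a$ is generated by the image of $\bs{a}$, it is enough to show $\bs{a}$ is WPR in $\what{A}$, i.e.\ that the inverse system $\{\opn{H}^{-p}(K(\what{A}; \bs{a}^k))\}_{k \in \N}$ of Koszul cohomologies is pro-zero for every $p > 0$. I would do this by establishing, for each $q \in \Z$, a natural isomorphism of inverse systems $\opn{H}^q(K(A; \bs{a}^k)) \iso \opn{H}^q(K(\what{A}; \bs{a}^k))$; the pro-zero property then transfers verbatim from the WPR hypothesis on $\a$ in $A$.

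The key input is that $\what{A}$ is $\a$-adically flat over $A$, even though (as the discussion preceding the statement recalls) it need not be honestly flat in the WPR setting. For an $A$-module $N$ annihilated by $\a^m$ and any $a \in \a^m$, $\hat{r} \in \what{A}$, $n \in N$, the $A$-balance of the tensor gives $a \cdot (\hat{r} \ot n) = (a \hat{r}) \ot n = \hat{r} \ot (a n) = 0$, so $\a^m$ annihilates $\what{A} \ot_A N$. Using the standard identification $\what{A}/\a^m \what{A} \cong A/\a^m$ valid for any finitely generated ideal, this forces
\[ \what{A} \ot_A N \, = \, \what{A}/\a^m \what{A} \ot_A N \, = \, A/\a^m \ot_A N \, = \, N. \]
Passing to filtered colimits, $\what{A} \ot_A (-)$ is literally the identity functor on the category of $\a$-torsion $A$-modules, in particular exact there. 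Applying Theorem \ref{thm:501} to the complete module $P = \what{A}$ converts this exactness into $\a$-adic flatness, whence $\opn{Tor}^A_i(\what{A}, M) = 0$ for every $\a$-torsion $M$ and every $i > 0$.

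The remainder is a spectral sequence collapse. Each $\opn{H}^q(K(A; \bs{a}^k))$ is annihilated by $(a_1^k, \ldots, a_n^k)$, hence by $\a^{n k}$ (any monomial of degree $n k$ in the $a_i$ necessarily involves some $a_i^k$), so every Koszul cohomology is $\a$-torsion. Since $K(A; \bs{a}^k)$ is a bounded complex of finitely generated free $A$-modules, $K(\what{A}; \bs{a}^k) = \what{A} \ot_A K(A; \bs{a}^k) = \what{A} \ot^{\mrm{L}}_A K(A; \bs{a}^k)$, and the convergent hyper-Tor spectral sequence
\[ E_2^{p,q} \, = \, \opn{Tor}^A_{-p} \bigl( \what{A}, \opn{H}^q(K(A; \bs{a}^k)) \bigr) \ \Longrightarrow \ \opn{H}^{p+q}(K(\what{A}; \bs{a}^k)) \]
collapses onto the row $p = 0$, where $E_2^{0,q} = \what{A} \ot_A \opn{H}^q(K(A; \bs{a}^k)) \cong \opn{H}^q(K(A; \bs{a}^k))$ by the identity-on-torsion fact. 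This delivers the desired natural isomorphism of inverse systems. The main obstacle is the first step: without some form of base-change control, off-row $\opn{Tor}$-terms could contaminate the abutment and spoil the pro-zero conclusion. The leverage around the failure of honest flatness comes entirely from Theorem \ref{thm:501}, which converts the flatness check into an exactness statement on $\a$-torsion modules where, as shown, the functor is simply the identity.
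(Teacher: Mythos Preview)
Your proof is correct and targets the same intermediate result as the paper --- that the canonical map $\opn{K}(A;\ba^k) \to \opn{K}(\what{A};\what{\ba}^{\,k})$ induces isomorphisms on cohomology, naturally in $k$ --- but reaches it by a genuinely different route. The paper (Lemma~\ref{lem:156}) argues inside $\cat{D}(A)_{\a\tup{-tor}}$: since both Koszul complexes have $\a$-torsion cohomology, the MGM equivalence reduces the question to checking that $\mrm{L}\La_{\a}$ of the map is an isomorphism, and this follows from Theorem~\ref{thm:115} applied to bounded complexes of $\a$-adically flat modules. You instead observe that $\what{A}\ot_A(-)$ is literally the identity on $\a$-torsion modules (this is essentially Proposition~\ref{prop:440}(2)), upgrade this to $\a$-adic flatness of $\what{A}$ via Theorem~\ref{thm:501} (legitimate since $\what{A}$ is complete and $\a$ is WPR), and then collapse the hyper-Tor spectral sequence against the bounded complex $\opn{K}(A;\ba^k)$. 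Your argument is more elementary in that it avoids MGM and the derived completion functor entirely, while the paper's is more structural and yields the quasi-isomorphism of complexes as a single statement. Both ultimately pivot on the same WPR-dependent fact --- the $\a$-adic flatness of $\what{A}$ over $A$ --- but exploit it through different machinery.
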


This theorem is repeated as Theorem \ref{thm:240}. The proof, in Section
\ref{sec:WPR-and-comp}, relies on the MGM equivalence (see Theorem
\ref{thm:271}). See discussion below regarding prior work of L. Positselski.

Here is a theorem that relies on Theorem \ref{thm:230}.

\begin{thm} \label{thm:516}
Let $A \to B$ be a flat ring homomorphism, and let $M$ be a flat $B$-module.
Let $\a \sub A$ be a weakly proregular ideal, and define the ideal
$\b := B \cd \a \sub B$. Let $\wh{B}$ be the $\b$-adic completion of $B$, with
ideal $\wh{\b} := \wh{B} \cd \b \sub B$, and let
$\wh{M}$ be the $\b$-adic completion of $M$.
Then $\wh{M}$ is a $\wh{\b}$-adically flat $\wh{B}$-module.
\end{thm}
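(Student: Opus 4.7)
The plan is to reduce the statement to Theorem \ref{thm:501} applied to the triple $(\wh{B}, \wh{\b}, \wh{M})$, using Theorem \ref{thm:230} to supply the weak proregularity hypothesis. The strategy is essentially: climb from $A$ to $B$ (preserving WPR via flat base change), then from $B$ to $\wh{B}$ (preserving WPR via Theorem \ref{thm:230}), and finally verify the concrete flatness criterion (iii) of Theorem \ref{thm:501} on the quotients $\wh{M}_k$.

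First, since $A \to B$ is flat and $\a$ is WPR, the ideal $\b = B \cd \a$ is WPR in $B$ by Lemma \ref{lem:510}. Next, applying Theorem \ref{thm:230} to the ring $B$ with its WPR ideal $\b$, we deduce that $\wh{\b} = \wh{B} \cd \b$ is a WPR ideal in $\wh{B}$. Thus Theorem \ref{thm:501} is available for the triple $(\wh{B}, \wh{\b}, \wh{M})$, provided $\wh{M}$ is $\wh{\b}$-adically complete. This last point is routine: because $\b$ is finitely generated, one has $\wh{\b}^{k+1} = \wh{B} \cd \b^{k+1}$, so the natural surjection $\wh{B}/\wh{\b}^{k+1} \to B/\b^{k+1}$ is an isomorphism, and similarly $\wh{M}/\wh{\b}^{k+1} \wh{M} \cong M/\b^{k+1} M$. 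Taking the inverse limit gives $\La_{\wh{\b}}(\wh{M}) \cong \wh{M}$.

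The final step is to verify condition (iii) of Theorem \ref{thm:501}, namely that for every $k \geq 0$ the $\wh{B}_k$-module $\wh{M}_k$ is flat. Under the isomorphisms just mentioned, this reduces to the statement that $M/\b^{k+1} M$ is flat over $B/\b^{k+1}$, which is immediate from the flatness of $M$ over $B$ (flatness is preserved by tensoring with a quotient ring). By Theorem \ref{thm:501}, it then follows that $\wh{M}$ is $\wh{\b}$-adically flat over $\wh{B}$.

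The only genuinely nontrivial ingredient in this argument is Theorem \ref{thm:230}, which supplies the weak proregularity of $\wh{\b}$; everything else is a matter of tracing through the standard identifications between the $\b$-adic and $\wh{\b}$-adic filtrations on $B$ and $M$. In particular, no further Tor-vanishing has to be verified by hand, because Theorem \ref{thm:501} encapsulates the passage from flatness modulo each power of $\wh{\b}$ to adic flatness of the complete module, and this is exactly where weak proregularity of $\wh{\b}$ is used.
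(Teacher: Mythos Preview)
Your proof is correct and follows essentially the same route as the paper: both arguments use Lemma~\ref{lem:510} to get $\b$ WPR, Theorem~\ref{thm:230} to get $\wh{\b}$ WPR, identify the quotients $\wh{B}/\wh{\b}^{k+1}\cong B/\b^{k+1}$ and $\wh{M}/\wh{\b}^{k+1}\wh{M}\cong M/\b^{k+1}M$, and then pass from the flat adic system to $\wh{\b}$-adic flatness of the limit. The only cosmetic difference is that you invoke the implication (iii)$\Rightarrow$(i) of Theorem~\ref{thm:501}, whereas the paper cites its underlying ingredient \cite[Theorem~6.9]{Ye4} directly; these are the same step.
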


One consequence of Theorem \ref{thm:516} is this:

\begin{cor} \label{cor:525}
Let $A \to B \to C$ be flat ring homomorphisms, with $A$ noetherian.
Given an ideal $\a \sub A$, let $\b := B \cd \a \sub B$ and
$\c := C \cd \a \sub C$ be the induced ideals, and let
$\wh{B}$ and $\wh{C}$ be the corresponding completions of $B$ and $C$.
Define the ideal $\wh{\b} := \wh{B} \cd \b \sub \wh{B}$.
Then $\wh{C}$ is $\wh{\b}$-adically flat over $\wh{B}$.
\end{cor}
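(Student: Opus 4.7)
The plan is to apply Theorem \ref{thm:516} directly, with the roles of the theorem's $A$, $B$, $\a$, $M$ played by $A$, $B$, $\a$, $C$ from the corollary (respectively).

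First I would verify the hypotheses. The ring homomorphism $A \to B$ is flat by assumption. Since $A$ is noetherian, every ideal in it is weakly proregular (this is the classical Grothendieck fact recalled in the introduction), so in particular $\a$ is WPR. The $B$-module $C$ is flat because $B \to C$ is a flat ring homomorphism. Thus Theorem \ref{thm:516} applies to $C$ as a flat $B$-module.

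Next I would identify the outputs of Theorem \ref{thm:516} with the data appearing in the corollary. The ``$\wh{B}$'' produced by the theorem---namely the $\b$-adic completion of the ring $B$---coincides on the nose with the $\wh{B}$ defined in the corollary, and the ideal ``$\wh{\b}$'' of the theorem agrees with $\wh{\b} = \wh{B} \cd \b$ of the corollary. The only point requiring verification is that the theorem's ``$\wh{M}$'', i.e.\ the $\b$-adic completion of $C$ regarded as a $B$-module, equals $\wh{C}$, the $\c$-adic completion of $C$ as a $C$-module. For this I would use that $\b = B \cd \a$ gives $\b^{k+1} = B \cd \a^{k+1}$ as ideals in $B$, whence
\[ \b^{k+1} \cd C \, = \, \a^{k+1} \cd C \, = \, C \cd \a^{k+1} \, = \, \c^{k+1} \]
inside $C$ for every $k \in \N$. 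The quotient systems $\{ C / \b^{k+1} C \}$ and $\{ C / \c^{k+1} \}$ therefore coincide term by term, so passing to inverse limits gives $\La_{\b}(C) = \wh{C}$.

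With this identification in hand, the conclusion of Theorem \ref{thm:516}---that $\wh{M}$ is $\wh{\b}$-adically flat over $\wh{B}$---reads precisely as the statement of the corollary. There is no genuine obstacle here; the argument is essentially a relabeling that combines the noetherian-implies-WPR principle, the compatibility of $B$-module and $C$-module completions under the equality $\b^{k+1} C = \c^{k+1}$, and a single invocation of Theorem \ref{thm:516}.
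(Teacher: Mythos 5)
Your proposal is correct and follows the same route as the paper: invoke the noetherian-implies-WPR fact to get $\a$ weakly proregular, observe that the $\b$-adic completion of $C$ agrees with $\wh{C}$, and apply Theorem \ref{thm:516} with $M := C$. The paper's proof is terser but makes exactly these three moves; you have simply spelled out the identification $\b^{k+1} C = \c^{k+1}$ that the paper leaves implicit.
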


The situation in the corollary arises naturally in certain aspects of perfectoid
theory. For instance when $A = \K[[t_1, \ldots, t_n]]$, the ring of powers
series over field of characteristic $p$, $C$ is the integral closure of $A$
(in an algebraic closure of the fraction field), and $B$ is the perfect closure
of $A$ in $C$.
Note that $\wh{B}$ and $\wh{C}$ are flat over $\wh{A}$, the $\a$-adic
completion of $A$, because the ring $\wh{A}$ is noetherian; see
\cite[Theorem 1.5]{Ye4}.

The theorem and the corollary are repeated as Theorem \ref{thm:510} and
Corollary \ref{cor:510}, where they are proved.

We do not know whether the theorems stated above remain true without the weak
proregularity condition.

To finish the Introduction, here is a discussion of related work. After showing
L. Positselski an early
version of our paper, containing Theorem \ref{thm:230}, he told us that
this result was already known to him. Indeed, it is stated in
\cite[Example 5.2(2)]{Po1}, with an indication how to prove it. As far as we
can tell, our proof (based on the MGM Equivalence and Lemma \ref{lem:156}) is
totally different.

After posting an early version of our paper online, we received a message from
J. Williamson, claiming that some of our results (Theorem \ref{thm:233}, and
the equivalence of conditions (i) and (iii) in Theorem \ref{thm:232})
can be deduced from results in
his joint paper \cite{PW}. While this claim might be true, we were not able to
verify it. The reason is that the theorems (and the proofs) in the paper
\cite{PW} are all in terms of Quillen equivalences between model categories. An
attempt to interpret these results in terms of derived categories and
triangulated functors, and then to compare them to our theorems, is quite
difficult.
We believe it is more appropriate to say that our Theorems \ref{thm:232} and
\ref{thm:233} are similar to some result in \cite{PW}.

It is worth mentioning that Theorem \ref{thm:230} was not known to the authors
of \cite{PW}, yet it was crucial to some of their main results. This forced
them to make statements contingent on Theorem \ref{thm:230} being true.


\section{Background Material on Completion of Modules}
\label{sec:back}

In this section we review some relevant facts and definitions about completion
and torsion, mostly taken from the paper \cite{Ye1}.
Recall that all rings in the paper are commutative.

Let us fix a ring $A$ and an ideal $\a \sub A$. Define the rings
$A_i := A / \a^{i + 1}$ for $i \in \N$.
The category of $A$-modules is denoted by $\cat{M}(A)$.
For an $A$-module $M$, we identify the $A_i$-modules
$M / (\a^{i + 1} \cd M) = A_i \ot_A M$.
The $\a$-adic completion of $M$ is
\begin{equation} \label{eqn:255}
\La_{\a}(M) := \lim_{\lto i} \msp{3} A_i \ot_A M .
\end{equation}
This is an $A$-linear functor from $\cat{M}(A)$ to itself, and there is
a functorial homomorphism $\tau_M : M \to \La_{\a}(M)$.
The module $M$ is called {\em $\a$-adically complete} if $\tau_M$ is an
isomorphism. If $\tau_M$ is injective, i.e.\ if
$\bigcap_i \a^i \cd M = 0$, then $M$ is called {\em $\a$-adically separated}.

The functor $\La_{\a}$ is neither left nor right exact.
If the ideal $\a$ is finitely generated, then the functor $\La_{\a}$
is idempotent, in the sense that for every module $M$ the $\a$-adic completion
$\La_{\a}(M)$ is $\a$-adically complete.
(See \cite[Example 1.8]{Ye1} for a counterexample when $\a$ is not finitely
generated.)
The full subcategory of $\cat{M}(A)$ on the $\a$-adically complete modules
is not an abelian subcategory.
(A substitute abelian category will be mentioned below.)

Let $M$ be an $A$-module. An element $m \in M$ is called an $\a$-torsion
element if $\a^i \cd m = 0$ for some $i \geq 1$. The set of
$\a$-torsion elements of $M$ is a submodule, called the $\a$-torsion submodule
of $M$, with notation $\Ga_{\a}(M)$.
By identifying
\[ \opn{Hom}_{A}({A_i}, M) = \{ m \in M \mid \a^{i + 1} \cd m = 0 \} , \]
we see that
\begin{equation} \label{eqn:256}
\Ga_{\a}(M) = \lim_{i \to} \msp{3} \opn{Hom}_{A}({A_i}, M) .
\end{equation}
This is an $A$-linear functor from $\cat{M}(A)$ to itself, and there is
a functorial homomorphism $\si_M : \Ga_{\a}(M) \to M$.
The module $M$ is called {\em $\a$-torsion} if $\Ga_{\a}(M) = M$, i.e.\ if
$\si_M$ is an isomorphism.

The functor $\Ga_{\a}$ is left exact. The full subcategory of
$\cat{M}(A)$ on the $\a$-torsion modules is denoted by
$\cat{M}_{\text{$\a$-tor}}(A)$.
It is an abelian subcategory. If $\a$ is finitely generated, then
$\cat{M}_{\text{$\a$-tor}}(A)$ is also closed under extensions, so it is a
thick abelian subcategory of $\cat{M}(A)$,

For the reasons mentioned above, from here on we shall assume this convention:

\begin{conv} \label{conv:280}
$A$ is a commutative ring, and $\a$ is a finitely generated ideal in
$A$. For $i \in \N$ we define the ring $A_i := A / \a^{i + 1}$. The $\a$-adic
completion of $A$ is the ring
$\what{A} := \lim_{\lto i} \msp{3} A_i$,
and $\what{\a} := \what{A} \cd \a$, the ideal in $\what{A}$ generated by $\a$.
\end{conv}

There are canonical ring homomorphisms $A \to A_i$ and
$\tau_A  : A \to \what{A}$. The ideal $\what{\a}$ is
finitely generated, the ring $\what{A}$ is $\what{\a}$-adically complete,
and the ring homomorphisms
$A_i \to \what{A} \msp{2} / \msp{2} \what{\a}^{\msp{2} i + 1}$ are
bijective.
If $M$ is an $\a$-adically complete $A$-module, then is has a unique
$\what{A}$-module structure extending the $A$-module structure.
Thus we may view completion as a functor
$\La_{\a} : \cat{M}(A) \to \cat{M}(\what{A})$.
We can also identify $\a$-adically complete $A$-modules with
$\what{\a}$-adically complete $\what{A}$-modules.
Similarly for $\a$-torsion $A$-modules.

Since completion of infinitely generated modules has a bit of subtlety (even
under Convention \ref{conv:280}), we provide the next elementary but useful
proposition.

\begin{prop} \label{prop:440}
Let $M$ and $N$ be $A$-modules.
\begin{enumerate}
\item Let $\what{M} := \La_{\a}(M)$. Then for every $i \in \N$
the homomorphism $\tau_{M, i} : A_i \ot_A M \to A_i \ot_A \what{M}$, that's
induced by $\tau_M$, is bijective.

\item Let $\what{M} := \La_{\a}(M)$. If $N$ is $\a$-torsion, then the
homomorphism $N \ot_A M \to N \ot_A \what{M}$ that's induced by $\tau_M$ is
bijective.

\item If $N$ is $\a$-adically complete, then the homomorphism
$\opn{Hom}_{A}(\La_{\a}(M), N) \to \lb \opn{Hom}_{A}(M, N)$
induced by $\tau_M$ is bijective.

\item If $M$ is $\a$-torsion, then the homomorphism
$\opn{Hom}_{A}(M, \Ga_{\a}(N)) \to \opn{Hom}_{A}(M, N)$
induced by $\si_N$ is bijective.

\item If $M$ and $N$ are both $\a$-adically complete (resp.\ $\a$-torsion),
then the homomorphism
$\opn{Hom}_{\what{A}}(M, N) \to \opn{Hom}_{A}(M, N)$,
corresponding to the ring homomorphism $A \to \what{A}$, is bijective.
\end{enumerate}
\end{prop}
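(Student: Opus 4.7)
The plan is to dispatch the five parts by exploiting the idempotence of $\La_{\a}$ on finitely generated ideals and by reducing each statement to formal adjointness or to continuity with respect to the $\a$-adic topology.

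For part (1), the key observation is that the definition (\ref{eqn:255}) gives $A_i \ot_A \wh{M}$ as the quotient of $\wh{M}$ by $\a^{i+1} \cd \wh{M}$, and under Convention \ref{conv:280} the canonical map $A_i \ot_A M \to A_i \ot_A \wh{M}$ is just the reduction mod $\a^{i+1}$ of the canonical map $M \to \wh{M}$. I would exhibit its inverse by using the universal property of the inverse limit: the transition maps in the tower $\{A_k \ot_A M\}_{k \geq i}$ are all surjective and become the identity on $A_i$-modules after applying $A_i \ot_{A_k}(-)$. Part (2) then follows from part (1) by writing $N = \lim_{\to} N_{(i)}$ where $N_{(i)} := \opn{Hom}_A(A_i, N) \sub N$ is the $\a^{i+1}$-torsion submodule (so each $N_{(i)}$ is naturally an $A_i$-module), and then commuting the tensor product with the filtered colimit.

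For parts (3) and (4) the approach is purely formal. In (3), given $\phi \in \opn{Hom}_A(M, N)$, I would construct the extension to $\wh{M}$ by observing that $N = \lim N/\a^{i+1}N$ since $N$ is complete, that each composite $M \to N \to N/\a^{i+1}N$ factors through $A_i \ot_A M = A_i \ot_A \wh{M}$ by part (1), and then assembling the inverse system of maps. In (4), the image of any $A$-linear $\phi : M \to N$ from an $\a$-torsion module lies in $\Ga_{\a}(N)$ for the obvious reason that $\a^{i+1} \cd m = 0$ forces $\a^{i+1} \cd \phi(m) = 0$, so the map $\opn{Hom}_A(M, \Ga_{\a}(N)) \to \opn{Hom}_A(M, N)$ has an evident inverse.

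Part (5) is the step that requires the most care, but it still reduces to continuity. In the complete case, I would argue that any $A$-linear $\phi : M \to N$ satisfies $\phi(\a^k \cd M) \sub \a^k \cd N$, hence is continuous for the $\a$-adic topologies; since the $\wh{A}$-action on a complete module $M$ is uniquely determined by the requirement that $\what{a} \cd m = \lim_k (a_k \cd m)$ for any sequence $a_k \in A$ converging to $\what{a}$, continuity of $\phi$ forces $\wh{A}$-linearity. In the $\a$-torsion case the argument is simpler: the $\wh{A}$-action on any element $m \in M$ with $\a^{i+1} \cd m = 0$ factors through $\wh{A} \to \wh{A}/\wh{\a}^{i+1} \cong A_i$, so $\wh{A}$-linearity of $\phi$ is automatic from $A$-linearity. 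I expect part (5) in the complete case to be the main (though mild) obstacle, since it is the only place where one must invoke $\a$-adic continuity rather than pure algebra; the remaining parts are essentially immediate from the definitions.
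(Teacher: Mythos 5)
Your treatment of parts (2)–(5) matches the paper's proof in substance: part (2) writes $N$ as the filtered colimit of the $\a^{i+1}$-torsion submodules $N^i = \opn{Hom}_A(A_i, N)$ and bootstraps from (1); part (3) reassembles an inverse system of maps through the $M_i$; part (4) is the trivial factorization; and part (5) reduces to the observation that the $\wh{A}$-action on a complete (resp.\ torsion) module is forced by $\a$-adic continuity (resp.\ by factoring through $A_i$). These are all in the same spirit as the paper's arguments.

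Part (1), however, has a genuine gap at exactly the place where the content lies. You are right that $A_i \ot_A \wh{M} = \wh{M} / \a^{i+1}\wh{M}$ (right-exactness of tensor), and you are right that the universal property of the inverse limit gives a map $\pi_i : A_i \ot_A \wh{M} \to M_i$ with $\pi_i \circ \tau_{M,i} = \opn{id}_{M_i}$, so $\tau_{M,i}$ is split injective. But this only shows that $\pi_i$ is a \emph{surjection}. The remaining claim — that $\pi_i$ is injective, equivalently that $\ker(\wh{M} \to M_i) \sub \a^{i+1}\wh{M}$, equivalently that $\tau_{M,i}$ is surjective — is the nontrivial assertion, and your justification ("the transition maps are surjective and become the identity after applying $A_i \ot_{A_k}(-)$") does not deliver it: it shows that $\lim_{\lto k}(A_i \ot_A M_k) = M_i$, but the functor $A_i \ot_A (-)$ does not commute with inverse limits, so this does not identify $A_i \ot_A (\lim_{\lto k} M_k)$ with $M_i$. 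The inclusion $\ker(\wh{M} \to M_i) \sub \a^{i+1}\wh{M}$ uses the finite generation of $\a$ in an essential way and requires a genuine argument (a presentation of $\a^{i+1}$ by finitely many generators and a compatible-choice / Mittag--Leffler type construction along the tower, or an equivalent device). The paper dispatches exactly this step by citing \cite[Theorem 2.8]{Ye4}, which is where the real work sits; your sketch treats it as formal, and that is the one place the proposal would not survive scrutiny as written.
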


\begin{proof}
(1) Let $M_i := A_i \ot_A M$, so
$\what{M} = \lim_{\lto i} \msp{3} M_i$.
By \cite[Theorem 2.8]{Ye4} the canonical homomorphism
$\pi_i : A_i \ot_A \what{M} \to M_i$ is bijective.
And $\pi_i \circ \tau_{M, i} = \opn{id}_{M_i}$.

\medskip \noindent
(2) Let $N^i := \opn{Hom}_{A}(A_i, N)$, so
$N =  \lim_{i \to}  N^i$.
Each $N^i$ is an $A_i$-module, so by item (1)
the homomorphism $N^i \ot_A M \to N^i \ot_A \what{M}$ is bijective.
Since $\ot_A$ respects direct limits, the assertion holds.

\medskip \noindent (3)
Let $\what{M}$ and $M_i$ be as above, and let $N_i := A_i \ot_A N$.
We are given that $N \cong \lim_{\lto i} \msp{3} N_i$.
By (1) we have $M_i \cong A_i \ot_A \what{M}$.
Then
\[ \opn{Hom}_{A}(\what{M}, N) \
\cong \lim_{\lto i} \msp{3} \opn{Hom}_{A_i}(M_i, N_i)
\cong \opn{Hom}_{A}(M, N) . \]

\medskip \noindent
(4) Since $M$ is torsion, every $\phi : M \to N$ factors through
$\Ga_{\a}(N)$.

\medskip \noindent
(5) Since $A_i \cong \what{A} / \msp{2} \what{\a}^{\msp{2} i + 1}$,
this is immediate from formula (\ref{eqn:255}) in the complete case, and
from formula (\ref{eqn:256}) in the torsion case.
\end{proof}

\begin{dfn} \label{dfn:145}
An $A$ module $P$ is called an {\em $\a$-adically free $A$-module} if it is
isomorphic to the $\a$-adic completion of a free $A$-module $P'$.
\end{dfn}

For some purposes it is useful to talk about {\em function modules}.
Given a set $Z$ and an $A$-module $M$, we denote by
$\opn{F}_{}(Z, M)$ the $A$-module of functions $\phi : Z \to M$.
Such a function $\phi$ can be viewed as a collection
$\bm = \{ m_z \}_{z \in Z}$ of elements of $M$, indexed by the set $Z$,
by letting $m_z := \phi(z)$.

In $\opn{F}_{}(Z, A)$ we have the submodule
$\opn{F}_{\mrm{fin}}(Z, A)$ of finitely supported functions
$\phi : Z \to A$. The $A$-module $\opn{F}_{\mrm{fin}}(Z, A)$
is free with basis the delta functions
$\{ \de_z \}_{z \in Z}$.

A function $\phi : Z \to \what{A}$ is called {\em $\a$-adically decaying} if
for every $i \geq 1$ the set
$\{ z \in Z \mid \phi(z) \notin \what{\a}^{\msp{2} i} \}$ is finite.
The $A$-module of $\a$-adically decaying functions is
denoted by $\opn{F}_{\mrm{dec}}(Z, \what{A})$.
It is known (see \cite[Corollaries 2.9 and 3.6]{Ye1}) that
$\opn{F}_{\mrm{dec}}(Z, \what{A})$ is the $\a$-adic completion of
$\opn{F}_{\mrm{fin}}(Z, A)$.
Every $\a$-adically free $A$-module $P$ is isomorphic to
$\opn{F}_{\mrm{dec}}(Z, \what{A})$ for a suitable set $Z$; indeed, if
$P \cong \La_{\a}(P')$ for some free module $P'$, and if
$P' \cong \opn{F}_{\mrm{fin}}(Z, A)$,
then we get an isomorphism $P \cong \opn{F}_{\mrm{dec}}(Z, \what{A})$.

An $\a$-adically complete $A$-module $M$ has
on it the $\a$-adic metric (see \cite[end of Section 1]{Ye1}), and $M$ is
complete for this metric (in the usual sense of metric spaces).

\begin{lem} \label{lem:256}
Let $M$ be an $\a$-adically complete $A$-module, and let $N \sub M$ be an
$A$-submodule that's closed for the $\a$-adic metric. Then there is a
surjective $A$-module homomorphism $\phi : P \to N$ from some  $\a$-adically
free $A$-module $P$.
\end{lem}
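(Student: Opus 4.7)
The plan is to take $Z := N$ as the indexing set and build $P$ as the $\a$-adic completion of the free $A$-module on $N$. Concretely, I would first set $P' := \opn{F}_{\mrm{fin}}(N, A)$, together with the surjection $\phi'_0 : P' \to N$ sending the basis element $\de_n$ to $n$. Then $P := \La_{\a}(P') \cong \opn{F}_{\mrm{dec}}(N, \what{A})$ is $\a$-adically free in the sense of Definition \ref{dfn:145}. Composing $\phi'_0$ with the inclusion $N \hookrightarrow M$ and applying $\La_{\a}$, and using $\La_{\a}(M) = M$ (since $M$ is $\a$-adically complete), yields a candidate homomorphism $\phi : P \to M$. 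The task then reduces to showing that $\phi$ lands in $N$ and is surjective onto $N$.

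For the image-containment step, I would fix $\bm = \{m_n\}_{n \in N} \in \opn{F}_{\mrm{dec}}(N, \what{A})$ and $i \in \N$. The decay condition makes $S_i := \{n \in N \mid m_n \notin \what{\a}^{\msp{2} i + 1}\}$ a finite set. By Proposition \ref{prop:440}(1), the image of $\bm$ in $A_i \ot_A P$ identifies, via the canonical isomorphism $A_i \ot_A P' \iso A_i \ot_A P$, with the finitely supported tuple $\{\bar{a}_n\}_{n \in S_i}$ inside $A_i \ot_A P' = \boplus_{n \in N} A_i$, where each $\bar{a}_n \in A_i$ is the class of $m_n$ modulo $\what{\a}^{\msp{2} i + 1}$. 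Choosing arbitrary lifts $a_n \in A$ of $\bar{a}_n$, the element $n_i := \sum_{n \in S_i} a_n \cd n$ lies in $N$ and is congruent to $\phi(\bm)$ modulo $\a^{i + 1} M$. Therefore $\phi(\bm) \in N + \a^{i + 1} M$ for every $i$, placing $\phi(\bm)$ in the $\a$-adic closure $\bigcap_{i} (N + \a^{i + 1} M)$ of $N$ inside $M$; by the closedness hypothesis this closure equals $N$, so $\phi(\bm) \in N$. Surjectivity onto $N$ is then immediate, since the image of $\de_n \in P'$ in $P$ is sent by $\phi$ to $n$ for every $n \in N$.

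The hard part will be the middle step: extracting an honest element $n_i \in N$ that approximates $\phi(\bm)$ to order $\a^{i + 1} M$ from the abstractly given decaying function $\bm$. The crucial subtlety is that the lifts $a_n$ must live in $A$ rather than in $\what{A}$, because $N$ is assumed only to be an $A$-submodule of $M$; this is precisely why Proposition \ref{prop:440}(1) is invoked to replace each $\what{A}$-valued coefficient $m_n$ by an $A_i$-coefficient $\bar{a}_n$ before lifting. Once this is accomplished, the closedness of $N$ in the $\a$-adic metric converts approximation modulo $\a^{i + 1} M$ into genuine membership in $N$, and the choice $Z := N$ takes care of surjectivity automatically.
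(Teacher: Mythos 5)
Your proposal is correct and follows essentially the same route as the paper's proof: complete a free surjection $P' \to N$ to a map $P = \La_\a(P') \to M$, and use the closedness of $N$ to show the image lands in $N$. The only differences are cosmetic — you take $Z := N$ rather than an arbitrary generating collection, and you unwind the paper's one-line appeal to "continuity and closedness" into an explicit approximation argument via Proposition \ref{prop:440}(1), which is a faithful elaboration of the same idea.
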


\begin{proof}
This is part of the proof of \cite[Lemma 1.20]{PSY2}, but the noetherian
assumption there is not needed. Choosing a generating collection
$\{ n_z \}_{z \in Z}$ for the $A$-module $N$,
we get a surjection
$\phi : P' \to N$,
where $P' := \opn{F}_{\mrm{fin}}(Z, A)$. The formula is
$\phi(\ba) = \sum_{z \in Z} a_z \cd n_z$
for a finitely supported collection
$\ba = \{ a_z \}_{z \in Z} \in P'$.
Let $P := \opn{F}_{\mrm{dec}}(Z, \what{A}) = \La_{\a}(P')$,
which is $\a$-adically free.
Since $M$ is complete, by Proposition \ref{prop:440}(2) we get a homomorphism
$\what{\phi} : P \to M$ extending $\phi : P' \to M$.
Because $N$ is closed in $M$, continuity implies that
$\phi(\ba) = \sum_{z \in Z} a_z \cd n_z \in N$ for all $\ba \in P$. Thus
$\what{\phi} : P \to N$ is a surjective homomorphism.
\end{proof}

\begin{prop} \label{prop:145}
Let  $M$ be an $\a$-adically complete $A$-module. Then there is an exact
sequence
\[ \cdots \to P^{-1} \xar{\d^{-1}_P} P^0 \xar{\pi} M \to 0 \]
of $A$-modules, such that all the $P^i$ are
$\a$-adically free $A$-modules.
\end{prop}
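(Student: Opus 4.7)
The plan is to build the resolution recursively, using Lemma~\ref{lem:256} at each step to resolve the kernel produced at the previous step. The key point to verify is that every such kernel is a closed submodule of an $\a$-adically complete module, so that Lemma~\ref{lem:256} continues to apply.

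For the base case, take $N = M$ as a (trivially closed) submodule of the $\a$-adically complete module $M$, and invoke Lemma~\ref{lem:256} to obtain a surjection $\pi : P^0 \to M$ from an $\a$-adically free $A$-module $P^0$. For the inductive step, suppose $P^{i+1}$ and the outgoing map --- either $\d^{i+1}_P : P^{i+1} \to P^{i+2}$ (when $i \leq -2$), or $\pi$ (when $i = -1$) --- have been constructed. Let $K^{i+1} \sub P^{i+1}$ be the kernel of this outgoing map. Since $P^{i+1}$ is $\a$-adically free, it is $\a$-adically complete, so provided $K^{i+1}$ is closed in $P^{i+1}$ for the $\a$-adic metric, Lemma~\ref{lem:256} produces a surjection $\phi^i : P^i \to K^{i+1}$ with $P^i$ an $\a$-adically free $A$-module. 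Setting $\d^i_P : P^i \to P^{i+1}$ to be $\phi^i$ followed by the inclusion $K^{i+1} \hookrightarrow P^{i+1}$ gives $\opn{im}(\d^i_P) = K^{i+1}$, hence exactness at $P^{i+1}$, and the induction continues.

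The one remaining obligation is to verify that $K^{i+1}$ is always closed. The central observation is that any $A$-module homomorphism $\phi : N_1 \to N_2$ satisfies $\phi(\a^j \cd N_1) \sub \a^j \cd N_2$ for all $j$, hence is continuous for the $\a$-adic metrics. In the base case, $M$ is $\a$-adically complete, in particular $\a$-adically separated, so $\{0\} \sub M$ is a closed subset; consequently $K^0 = \pi^{-1}(0)$ is closed in $P^0$. In the inductive step, $P^{i+2}$ is $\a$-adically free, hence also separated, and the same reasoning shows that $K^{i+1} = (\d^{i+1}_P)^{-1}(0)$ is closed in $P^{i+1}$. No serious obstacle is expected: the genuine technical work --- producing $\a$-adically free surjections onto closed submodules of complete modules --- has already been carried out in Lemma~\ref{lem:256}, and what remains here is essentially bookkeeping.
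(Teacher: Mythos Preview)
Your proof is correct and follows essentially the same route as the paper: iterate Lemma~\ref{lem:256}, each time applying it to the kernel of the previously constructed map, after checking that this kernel is closed in the ambient $\a$-adically free module. You spell out the closedness argument more carefully (continuity of module homomorphisms plus separatedness of the target), whereas the paper just asserts it, but the structure of the proof is the same.
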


This is \cite[Lemma 1.20]{PSY2}, but the noetherian condition there is
superfluous. Here is a concise proof.

\begin{proof}
By Lemma \ref{lem:256} there is a surjection
$\pi : P^0 \to M$ from some $\a$-adically free $A$-module $P^0$.
The submodule $N_0 := \opn{Ker}(\pi) \sub P^0$ is closed, because $M$ is
$\a$-adically complete. Using Lemma \ref{lem:256}, there is a surjection
$\d^{-1}_P : P^{-1} \to N_0$ from some $\a$-adically free $A$-module $P^{-1}$.
And so on.
\end{proof}

While working on the present paper, we discovered an error in the proof
of \cite[Theorem 2.11]{Ye1}. Here is a correct proof of a slightly stronger
statement. Corollary \ref{cor:265} is a repetition of
\cite[Theorem 2.11]{Ye1}.

\begin{thm}[Complete Nakayama] \label{thm:275}
Let $A$ be a ring, let $\a$ be an ideal in $A$,  and let
$\phi : M \to N$ be an $A$-module homomorphism.
Assume that $M$ is $\a$-adically complete, and $N$ is $\a$-adically separated.
Define $A_0 := A / \a$ and $N_0 := A_0 \ot_A N$.
Let $\pi_0 : N \to N_0$ be the canonical surjection,
and let $\phi_0 := \pi_0 \circ \phi : M \to N_0$.
Then the following two conditions are equivalent:
\begin{itemize}
\rmitem{i}  $\phi : M \to N$ is surjective.

\rmitem{ii}  $\phi_0 : M \to N_0$ is surjective.
\end{itemize}
\end{thm}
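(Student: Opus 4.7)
The implication (i) $\Rightarrow$ (ii) is immediate: the composition $\phi_0 = \pi_0 \circ \phi$ of two surjections is surjective. All the work lies in the converse, for which I will use a standard successive-approximation argument, taking advantage of the completeness of $M$ and the separatedness of $N$.

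So assume (ii), and fix any $n \in N$. My plan is to build inductively a sequence $m_0, m_1, m_2, \ldots \in M$ with $m_k \in \a^k \cdot M$ for $k \geq 1$, such that the partial sums $s_k := m_0 + m_1 + \cdots + m_k$ satisfy
\[ n - \phi(s_k) \in \a^{k + 1} \cdot N . \]
The base case uses surjectivity of $\phi_0$: since $\phi(M) + \a \cdot N = N$, there is $m_0 \in M$ with $n - \phi(m_0) \in \a \cdot N$. For the inductive step, write $n - \phi(s_{k-1}) = \sum_j a_j \cdot n_j$ as a \emph{finite} sum with $a_j \in \a^k$ and $n_j \in N$ (this is where I use only that the given element lies in $\a^k \cdot N$, not any finite generation of $\a$). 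For each $n_j$, surjectivity of $\phi_0$ gives $m_j' \in M$ with $n_j - \phi(m_j') \in \a \cdot N$. Set $m_k := \sum_j a_j \cdot m_j' \in \a^k \cdot M$; then
\[ n - \phi(s_k) = \sum_j a_j \cdot (n_j - \phi(m_j')) \in \a^{k+1} \cdot N , \]
as required.

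Since $m_k \in \a^k \cdot M$, the sequence $(s_k)$ is Cauchy in the $\a$-adic topology on $M$. Because $M$ is $\a$-adically complete (hence in particular separated, so the topology is Hausdorff and limits are unique), there is a well-defined $m := \lim_k s_k \in M$, the sum of the series $\sum_k m_k$. The homomorphism $\phi$ is $A$-linear, hence satisfies $\phi(\a^k \cdot M) \subseteq \a^k \cdot N$, and is therefore continuous for the $\a$-adic topologies; consequently $\phi(m) = \lim_k \phi(s_k)$ in $N$. Combined with $n - \phi(s_k) \in \a^{k+1} \cdot N$ for every $k$, this gives
\[ n - \phi(m) \in \bigcap_{k \geq 0} \, \a^{k+1} \cdot N = 0 , \]
where the last equality is the separatedness of $N$. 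Hence $n = \phi(m)$, proving (i).

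The proof is mostly routine; the only real point to watch is that the inductive step should not require finite generation of $\a$, only the fact that every element of $\a^k \cdot N$ is a finite sum $\sum a_j \cdot n_j$ with $a_j \in \a^k$. This is what allows the statement to be made for an arbitrary ideal $\a$, beyond the standing Convention \ref{conv:280}.
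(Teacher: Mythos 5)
Your proof is correct and is essentially the same successive-approximation argument the paper gives: both build $m_k \in \a^k \cd M$ inductively so that the partial sums $s_k$ satisfy $n - \phi(s_k) \in \a^{k+1} \cd N$, then pass to the limit using completeness of $M$, continuity of $\phi$, and separatedness of $N$. Your closing remark that the argument needs no finite generation of $\a$ (only that elements of $\a^k \cd N$ are finite $\a^k$-linear combinations) is a point the paper leaves implicit, and it correctly explains why the theorem is stated for an arbitrary ideal rather than under Convention \ref{conv:280}.
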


\begin{proof}
We shall only treat the nontrivial implication.
The basic observation is this:
since $N_0 = N / (\a \cd N)$,
the surjectivity of $\phi_0$
means that $N = \phi(M) + \a \cd N$.
Therefore, given an arbitrary element
$n \in N$, we can find an element $m \in M$ such that
\begin{equation} \label{eqn:260}
n - \phi(m) \in \a \cd N .
\end{equation}

Now let's fix some element $n \in N$.
We are going to find a sequence
$m_0, m_1, \ldots$ of elements of $M$, such that
$m_k \in \a^{k} \cd M$, and for every $k \geq 0$ the formula
\begin{equation} \label{eqn:261}
n - \sum_{i = 0}^k \msp{5} \phi(m_i) \in \a^{k + 1} \cd N
\end{equation}
will hold. This will be done by induction on $k$.

For $k = 0$ we look at formula (\ref{eqn:260}), and define
$m_{0} := m$.

Now assume $k \geq 0$, and we have elements
$m_0, m_1, \ldots, m_k$ satisfying formula (\ref{eqn:261}).
Let $n' := n - \sum_{i = 0}^k \msp{2} \phi(m_i)$.
Since $n' \in \a^{k + 1} \cd N$, there are finitely many elements
$a_{i} \in \a^{k + 1}$ and $n'_{i} \in N$
such that
$n' = \sum_{i} \msp{2} a_i \cd n'_i$.
Using (\ref{eqn:260}) we can find elements $m'_i \in M$
such that $n'_i - \phi(m'_i) \in \a \cd N$.
Define
$m_{k + 1} := \sum_{i} \msp{2} a_i \cd m'_i \in \a^{k + 1} \cd M$.
Then
\[ n - \sum_{i = 0}^{k + 1} \msp{5} \phi(m_i) =
n' - \phi(m_{k + 1}) =
\sum_{i} \msp{2} a_i \cd (n'_i - \phi(m'_i)) \in \a^{k + 2} \cd N . \]
This finishes the inductive construction.

Because $M$ is $\a$-adically complete, the infinite sum
$m := \sum_{k = 0}^{\infty} m_k \in M$
converges. By formula (\ref{eqn:261}), the $\a$-adic continuity of $\phi$, and
the separatedness of $N$, we see that $\phi(m) = n$.
\end{proof}

Let $N$ be an $\a$-adically complete $A$-module. A collection
$\bn = \{ n_z \}_{z \in Z}$ of elements of $N$ is an
{\em $\a$-adic generating collection} if for every $n \in N$ there is an
$\a$-adically decaying collection
$\ba = \{ a_z \}_{z \in Z}$ of elements of $\what{A}$,
i.e.\ $\ba \in \opn{F}_{\mrm{dec}}(Z, \what{A})$, such that
$n = \sum_{z \in Z} a_z \cd n_z$.

\begin{cor} \label{cor:265}
In the setting of the theorem, assume that $N$ is also $\a$-adically complete.
the following conditions are equivalent for a
collection $\{ n_z \}_{z \in Z}$ of elements of $N$.
\begin{itemize}
\rmitem{i} The collection $\{ n_z \}_{z \in Z}$ is an $\a$-adic generating
collection of $N$.

\rmitem{ii} The collection $\{ \pi_0(n_z) \}_{z \in Z}$ is a
generating collection of $N_0$.
\end{itemize}
\end{cor}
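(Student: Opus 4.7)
The plan is to reduce the statement directly to the Complete Nakayama Theorem \ref{thm:275}, applied to the canonical homomorphism from an $\a$-adically free module built out of the index set $Z$.

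First, set $P := \opn{F}_{\mrm{dec}}(Z, \what{A})$, the $\a$-adic completion of the free $A$-module $P' := \opn{F}_{\mrm{fin}}(Z, A)$ on the delta functions $\{\de_z\}_{z \in Z}$; then $P$ is $\a$-adically free and in particular $\a$-adically complete. Using the unique $\what{A}$-module structure on $N$ extending its $A$-structure, define the $A$-linear map $\phi : P \to N$ by $\phi(\ba) := \sum_{z \in Z} a_z \cd n_z$. Convergence of this sum in $N$ follows from the $\a$-adic decay of $\ba$ together with the completeness of $N$: for each $i \geq 1$, all but finitely many terms lie in $\a^{\msp{1} i} \cd N$, so the finite partial sums form a Cauchy net with a unique limit. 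By the very definition of an $\a$-adic generating collection, condition (i) is precisely the surjectivity of $\phi$.

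Next, analyze the reduction $\phi_0 := \pi_0 \circ \phi : P \to N_0$. Since $\phi_0$ annihilates $\a \cd P$, it factors through $A_0 \ot_A P$, which by Proposition \ref{prop:440}(1) agrees with $A_0 \ot_A P'$ --- the free $A_0$-module on the basis $\{\de_z\}_{z \in Z}$. The induced map sends $\de_z \mapsto \pi_0(n_z)$, so $\phi_0$ is surjective if and only if the collection $\{\pi_0(n_z)\}_{z \in Z}$ generates $N_0$ over $A_0$, i.e.\ condition (ii) holds.

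Finally, since both $P$ and $N$ are $\a$-adically complete, and in particular $\a$-adically separated, Theorem \ref{thm:275} applied to $\phi : P \to N$ yields that $\phi$ is surjective if and only if $\phi_0$ is. Combining the three steps gives the equivalence (i) $\Leftrightarrow$ (ii). I expect no serious obstacle; the only delicate points are verifying convergence of the sum defining $\phi$ and correctly identifying $A_0 \ot_A P$ with the free $A_0$-module on $Z$ via Proposition \ref{prop:440}(1), after which the result is formal.
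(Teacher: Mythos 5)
Your proof is correct and follows essentially the same route as the paper: introduce $P := \opn{F}_{\mrm{dec}}(Z, \what{A})$, define $\phi(\ba) := \sum_{z \in Z} a_z \cd n_z$, identify conditions (i) and (ii) with the two surjectivity conditions in Theorem \ref{thm:275}, and conclude. The extra detail you give -- verifying convergence of the sum and using Proposition \ref{prop:440}(1) to identify $A_0 \ot_A P$ with the free $A_0$-module on $Z$ -- is accurate and simply makes explicit what the paper leaves to the reader.
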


\begin{proof}
Let $P := \opn{F}_{\mrm{dec}}(Z, \what{A})$, and let
$\phi : P \to N$ be the $A$-linear homomorphism
$\phi(\ba) := \sum_{z \in Z} a_z \cd n_z$.
Consider the theorem, with $M := P$.
Then condition (i) here is condition (i) in the theorem, and condition (ii)
here is condition (ii) in the theorem. So
(i) $\Leftrightarrow$ (ii).
\end{proof}

\section{Background Material on Derived Completion}
\label{sec:back-der}

First let us recall some categorical definitions and results, primarily
following \cite{PSY1}, \cite{Ye5} and \cite{Ye6}.
All rings are commutative.

Fix a ring $A$. Recall that $\cat{M}(A)$ is the category of
$A$-modules. It is
an $A$-linear abelian category. The DG category of complexes of $A$-modules is
$\cat{C}(A)$.
Given $M \in \cat{C}(A)$, we denote by $\opn{Z}^i(M)$,
$\opn{B}^i(M)$ and $\opn{H}^i(M)$ the modules of degree $i$ cocycles,
coboundaries, and cohomologies, respectively.
Taking direct sums we get the graded $A$-modules
$\opn{Z}(M) := \boplus_i \opn{Z}^i(M)$,
$\opn{B}(M) := \boplus_i \opn{B}^i(M)$ and
$\opn{H}(M) := \boplus_i \opn{H}^i(M)$.
These satisfy
$\opn{H}(M) =$ $\opn{Z}(M) \msp{2} / \msp{2} \opn{B}(M)$.

A homomorphism $\phi : M \to N$ in $\cat{C}(A)$ is called {\em strict}
if it has degree $0$ and it commutes with the differentials.
The strict subcategory of $\cat{C}(A)$, with all objects but only strict
homomorphisms, is $\cat{C}_{\mrm{str}}(A)$.  $\cat{C}_{\mrm{str}}(A)$ is an
$A$-linear abelian category, containing $\cat{M}(A)$ as a full abelian
subcategory, i.e.\ the complexes concentrated in degree $0$.
The homotopy category of $A$-modules is $\cat{K}(A)$, and it is an $A$-linear
triangulated category.
The projection functor (identity on objects, surjective on morphisms) is
$\opn{P} : \cat{C}_{\mrm{str}}(A) \to \cat{K}(A)$.
For complexes $M$ and $N$ we have
$\opn{Hom}_{\cat{C}(A)}(M, N) = \opn{Hom}_{A}(M, N)$,
$\opn{Hom}_{\cat{C}_{\mrm{str}}(A)}(M, N) = \opn{Z}^0(\opn{Hom}_{A}(M, N))$,
and
$\opn{Hom}_{\cat{K}(A)}(M, N) = \opn{H}^0(\opn{Hom}_{A}(M, N))$.
A homomorphism $\phi : M \to  N$ in $\cat{C}_{\mrm{str}}(A)$ is a homotopy
equivalence if and only if
$\opn{P}(\phi) : M \to N$ is an isomorphism in $\cat{K}(A)$.

The  derived category of $A$ is denoted by $\cat{D}(A)$, and it is
an $A$-linear triangulated category. $\cat{D}(A)$ is the categorical
localization of $\cat{K}(A)$ with respect to the quasi-isomorphisms, and the
localization functor is $\bar{\opn{Q}} : \cat{K}(A) \to \cat{D}(A)$,
an $A$-linear triangulated functor.
The composed functor
$\opn{Q} := \bar{\opn{Q}} \circ \opn{P} : \cat{C}_{\mrm{str}}(A)
\to \cat{D}(A)$
is an $A$-linear functor, and it is the identity on objects.
For more details on these matters see \cite[Chapters 3, 5, 7]{Ye5}.

Here is a review of the concepts of integer
interval, concentration, supremum, infimum and amplitude of a graded
$A$-module, all taken from \cite[Sections 12.1 and 12.4]{Ye5}.
For a graded $A$-module
$M = \bigoplus_{i \in \Z} M^i$,
its supremum is
$\opn{sup}(M) := \opn{sup} \{ \msp{2} i \mid M^i \neq 0 \} \in
\Z \cup \{ \pm \infty \}$.
The extreme cases are these:
$\opn{sup}(M) = \infty$ if and only if $M$ is unbounded above, and
$\opn{sup}(M) = -\infty$ if and only if $M = 0$.
The infimum $\opn{inf}(M)$ is defined analogously.
The amplitude of $M$ is
$\opn{amp}(M) := \opn{sup}(M) - \opn{inf}(M) \in \Z \cup \{ \pm \infty \}$.
If $M \neq 0$, and we let
$i_0 := \opn{inf}(M)$ and  $i_1 := \opn{sup}(M)$, then $M$ is concentrated in
the degree interval $[i_0, i_1]$.

For a complex of $A$-modules $M$ there are two distinct values:
$\opn{sup}(M)$, which refers to the supremum of the underlying graded
module, and $\opn{sup}(\opn{H}(M))$. Likewise for the other concepts.
We say that $M$ is bounded above (resp.\ cohomologically bounded above) if
$\opn{sup}(M) < \infty$ (resp.\ $\opn{sup}(\opn{H}(M)) < \infty$).

We shall use the stupid truncations from \cite[Definition 11.2.1]{Ye5} several
times. We shall also use the notions of cohomological displacement and
cohomological dimension of a functor from $\cat{D}(A)$ to itself, from
\cite[Section 12.4]{Ye5}.

A complex of $A$-modules $P$ is called {\em K-flat} if for every acyclic complex
$N$, the complex $P \ot_{A} N$ is acyclic. The complex $P$ is
called {\em K-projective} if for every acyclic complex $N$, the complex
$\opn{Hom}_{A}(P, N)$ is is acyclic. The complex $P$ is called {\em semi-free}
if it is a complex of free $A$-modules, which is either bounded above,
or else it admits a suitable filtration (see \cite[Definition 11.4.3]{Ye5}).
The logical implications are semi-free $\Rightarrow$ K-projective
$\Rightarrow$ K-flat.
Every complex $M$ admits a semi-free resolution, i.e.\ a quasi-isomorphism
$\rho : P \to M$ from a semi-free complex $P$, such that
$\opn{sup}(P) = \opn{sup}(\opn{H}(M))$; see \cite[Corollary 11.4.27]{Ye5}.

Let $\cat{K}(A)_{\tup{prj}}$ be the full subcategory of $\cat{K}(A)$ on the
K-projective complexes. Then the localization functor restricts to an
equivalence of triangulated categories
$\opn{Q} : \cat{K}(A)_{\tup{prj}} \to \cat{D}(A)$.
See \cite[Corollary 10.2.11]{Ye5}.

From here we assume that Convention \ref{conv:280} is in place,
so there is a finitely generated ideal $\a \sub A$.
The $A$-linear functor $\La_{\a} : \cat{M}(A) \to \cat{M}(A)$
extends to a DG functor
$\La_{\a} : \cat{C}(A) \to \cat{C}(A)$,
whose formula is
$\La_{\a}(M) := \boplus_{i \in \Z} \La_{\a}(M^i)$
for $M = \boplus_{i \in \Z} M^i \in \cat{C}(A)$.
There is an induced triangulated functor
$\La_{\a} : \cat{K}(A) \to \cat{K}(A)$,
and it admits a left derived functor
$\mrm{L} \La_{\a} : \cat{D}(A) \to \cat{D}(A)$,
with accompanying universal morphism
$\eta^{\mrm{L}} : \mrm{L} \La_{\a} \circ \bar{\opn{Q}} \to \bar{\opn{Q}} \circ
\, \La_{\a}$
of functors
$\cat{K}(A) \to \cat{D}(A)$.
The functor $\mrm{L} \La_{\a}$ is calculated using K-flat complexes.
There is a functorial morphism $\tau^{\mrm{L}}_M : M \to \mrm{L} \La_{\a}(M)$
in $\cat{D}(A)$ such that
$\eta^{\mrm{L}}_M \circ \tau^{\mrm{L}}_M = \opn{Q}(\tau_M)$ as morphisms
$M \to \La_{\a}(M)$.
These are shown in the following commutative diagram in $\cat{D}(A)$~:
\begin{equation} \label{eqn:535}
\begin{tikzcd} [column sep = 8ex, row sep = 6ex]
M
\ar[r, "{\tau^{\mrm{L}}_{M}}"]
\arrow[rr, bend left = 20, start  anchor = north,
end  anchor = north,  "{\opn{Q}(\tau^{}_{M})}"]
&
\mrm{L} \La_{\a}(M)
\ar[r, "{\eta^{\mrm{L}}_{M}}"]
&
\La_{\a}(M)
\end{tikzcd}
\end{equation}

To simplify notation, we shall often abuse notation a bit,
and write $\opn{Q}$ instead of $\bar{\opn{Q}}$ for the localization functor
$\cat{K}(A) \to \cat{D}(A)$.

The functor $\Ga_{\a} : \cat{M}(A) \to \cat{M}(A)$
admits a right derived functor
$\mrm{R} \Ga_{\a} : \cat{D}(A) \to \cat{D}(A)$,
with accompanying universal morphism of functors
$\eta^{\mrm{R}} : \Ga_{\a} \to \mrm{R} \Ga_{\a}$.
The functor $\mrm{R} \Ga_{\a}$ is calculated by K-injective complexes.
There is a functorial morphism
$\si^{\mrm{R}}_M : \mrm{R} \Ga_{\a}(M) \to M$
in $\cat{D}(A)$, such that
$\si^{\mrm{R}}_M \circ \eta^{\mrm{R}}_M = \opn{Q}(\si_M)$
as morphisms $\Ga_{\a}(M) \to M$. And there is a commutative diagram
like (\ref{eqn:535}.

\begin{dfn} \label{dfn:270}
Let $A$ be a ring and $\a$ an ideal in it.
\begin{enumerate}
\item A complex of $A$-modules $M$ is called {\em derived $\a$-adically
complete} if the morphism
$\tau^{\mrm{L}}_M : M \to \mrm{L} \La_{\a}(M)$
in $\cat{D}(A)$ is an isomorphism.
The full subcategory of $\cat{D}(A)$ on the derived $\a$-adically complete
complexes is denoted by $\cat{D}(A)_{\tup{$\a$-com}}$.

\item A complex of $A$-modules $M$ is called {\em derived $\a$-torsion}
if the morphism
$\si^{\mrm{R}}_M : \mrm{R} \Ga_{\a}(M) \to M$
in $\cat{D}(A)$ is an isomorphism.
The full subcategory of $\cat{D}(A)$ on the derived $\a$-torsion
complexes is denoted by $\cat{D}(A)_{\tup{$\a$-tor}}$.
\end{enumerate}
\end{dfn}

The categories $\cat{D}(A)_{\tup{$\a$-com}}$ and
$\cat{D}(A)_{\tup{$\a$-tor}}$ are triangulated.
In our previous papers (starting with \cite{PSY1}) we used the terms
``cohomologically $\a$-adically
complete'' and ``cohomologically $\a$-torsion''.
The reason for the updated names is that the properties described in Definition
\ref{dfn:270} are those of $M$ as an object of the derived category, and not
properties of the cohomology $\opn{H}(M)$.

In order to distinguish the definition above from other similar definitions in
the literature, we can be more precise and call these complexes
{\em derived $\a$-adically complete in the idealistic sense}, and
{\em derived $\a$-torsion in the idealistic sense}, respectively.
This distinction was studied in \cite{Ye6}, and below we provide a summary.

In order to define the other variants of derived completion and torsion, and
weak proregularity, we first have to introduce more concepts.

Recall that given an element $a \in A$, the associated {\em Koszul complex} is
\begin{equation} \label{eqn:333}
\opn{K}(A; a) :=
\bigl( \cdots \to 0 \to A \xar{a \cd (-)} A \to 0 \to \cdots \bigr)
\end{equation}
concentrated in degrees $-1$ and $0$.
Next, given a finite sequence $\ba = (a_1, \ldots, a_p)$ of elements in $A$,
with $p \geq 1$,
the associated Koszul complex of $\ba$ is
\begin{equation} \label{eqn:334}
\opn{K}(A; \ba) :=
\opn{K}(A; a_1) \ot_{A} \cdots \ot_{A} \opn{K}(A; a_p) .
\end{equation}
This is a complex of finite rank free $A$-modules, concentrated in degrees
$-p, \ldots, 0$. Therefore $\opn{K}(A; \ba)$ is a semi-free complex of
$A$-modules.
Moreover, $\opn{K}(A; \ba)$ is a {\em commutative semi-free DG $A$-ring} in the
sense of \cite[Definition 3.11(1)]{Ye3}.
As a graded ring we have
$\opn{K}(A; \ba)^{\natural} \cong A[x_1, \ldots, x_p]$, where the $x_i$ are
variables of degree $-1$.
If the sequence $\ba$ generates the ideal $\a$, then there is a canonical
$A$-ring isomorphism
$\opn{H}^0(\opn{K}(A; \ba)) \cong A_0 = A / \a$.
Therefore each $\opn{H}^i(\opn{K}(A; \ba))$ is an $A_0$-module.

For every $j_1 \geq j_0 \geq 1$ there is a homomorphism of complexes
$\opn{K}(A; a^{j_1}) \to \opn{K}(A; a^{j_0})$,
which is the identity in degree $0$, and multiplication by
$a^{j_1 - j_0}$ in degree $-1$.
For a sequence $\ba = (a_1, \ldots, a_p)$ of elements in $A$
we write $\ba^j := (a_1^j, \ldots, a_p^j)$.
Then for $j_1 \geq j_0 \geq 1$ there is a homomorphism of complexes
\begin{equation} \label{eqn:275}
\opn{K}(A; \ba^{j_1}) \to \opn{K}(A; \ba^{j_0}) .
\end{equation}
(In fact this is a DG $A$-ring homomorphism.)
In this way the collection
$\{ \opn{K}(A; \ba^{j}) \}_{j \geq 1}$
is an inverse system of complexes.

An inverse system of $A$-modules
$\{ N_j \}_{j \in \N}$ is called {\em pro-zero}, or is said to satisfy the
trivial ML condition, if for every $j_0$ there is some
$j_1 \geq j_0$ such that the homomorphism $N_{j_1} \to N_{j_0}$ is zero.

\begin{dfn} \label{dfn:255}
A sequence $\ba = (a_1, \ldots, a_p)$ of elements in a ring $A$ is called a
{\em weakly proregular sequence} if for every $i \leq -1$ the inverse system
$\bigl\{ \opn{H}^i \bigl( \opn{K}(A; \ba^{j}) \bigr) \bigr\}_{j \in \N}$
is pro-zero.
\end{dfn}

\begin{dfn} \label{dfn:256}
An ideal $\a$ in a ring $A$ is called a {\em weakly proregular ideal}
if it is generated by some weakly proregular sequence
$\ba = (a_1, \ldots, a_p)$.
\end{dfn}

The standard abbreviation for weakly proregular is WPR.
Grothendieck proved in \cite{LC} that when $A$ is a noetherian ring, every
ideal in it is WPR. By \cite[Corollary 6.3]{PSY1}, if $\a$ is a WPR ideal,
then every finite sequence $\ba = (a_1, \ldots, a_p)$ that generates $\a$ is a
WPR sequence.

Given a sequence $\ba = (a_1, \ldots, a_p)$ in $A$,
the {\em dual Koszul complex} is
\begin{equation} \label{eqn:330}
\opn{K}^{\vee}(A; \ba) := \opn{Hom}_A(\opn{K}(A; \ba), A) .
\end{equation}
The {\em infinite  dual Koszul complex}, also called the {\em augmented
\v{C}ech complex}, is
\begin{equation} \label{eqn:331}
\opn{K}^{\vee}_{\infty}(A; \ba) :=
\lim_{j \to} \opn{K}^{\vee}_{}(A; \ba^j) .
\end{equation}
The direct system is dual to the inverse system in formula
(\ref{eqn:275}).
The complex $\opn{K}^{\vee}_{\infty}(A; \ba)$ looks like this:
\begin{equation} \label{eqn:332}
\opn{K}^{\vee}_{\infty}(A; \ba) =
\bigl( \msp{4} 0 \to A \to \bigoplus_i A[a_i^{-1}] \to \cdots \to
A[(a_1 \cdots a_p)^{-1}] \to 0 \msp{4} \bigr) ,
\end{equation}
concentrated in degrees $0, \ldots, p$.
There is also the {\em telescope complex}
$\opn{Tel}(A; \ba)$, which is a complex of countable rank free $A$-modules,
concentrated in degrees $0, \ldots, p$; see \cite[Section 5]{PSY1}.
There is a canonical quasi-isomorphism
$\opn{Tel}(A; \ba) \to \opn{K}_{\infty}^{\vee}(A; \ba)$,
and a canonical homomorphism
$\opn{Tel}(A; \ba) \to A$ in $\cat{C}_{\mrm{str}}(A)$, called the augmentation.
For $M \in \cat{C}(A)$ let
\begin{equation} \label{eqn:337}
\tau^{\ba}_{M} : M \to \opn{Hom}_{A} \bigl( \opn{Tel}(A; \ba), M \bigr)
\end{equation}
be the homomorphism in $\cat{C}_{\mrm{str}}(A)$ induced by the augmentation.
See \cite[Equation (5.19)]{PSY1}.

Let $M$ be a complex of $A$-modules.
The {\em derived $\a$-adic completion in the sequential sense} of $M$  is
\begin{equation} \label{eqn:338}
\mrm{L}_{\mrm{seq}} \La_{\a} (M) :=
\opn{RHom}_A(\opn{K}_{\infty}^{\vee}(A; \ba), M) \cong
\opn{Hom}_A(\opn{Tel}(A; \ba), M) .
\end{equation}
The complex $M$ is called {\em derived $\a$-adically complete in
the sequential sense} if the morphism
\begin{equation} \label{eqn:339}
\opn{Q}(\tau^{\ba}_{M}) : M \to \mrm{L}_{\mrm{seq}} \La_{\a} (M)
\end{equation}
in $\cat{D}(A)$ is an isomorphism.
Similarly, The {\em derived $\a$-torsion in the sequential sense} of $M$  is
\begin{equation} \label{eqn:340}
\mrm{R}_{\mrm{seq}} \Ga_{\a} (M) :=
\opn{K}^{\vee}_{\infty}(A; \ba) \ot_A M \cong
\opn{Tel}(A; \ba) \ot_A M ,
\end{equation}
and the complex $M$ is called {\em derived $\a$-torsion in
the sequential sense} if the corresponding morphism
\begin{equation} \label{eqn:341}
\opn{Q}(\si^{\ba}_{M}) :  \mrm{R}_{\mrm{seq}} \Ga_{\a} (M) \to M
\end{equation}
in $\cat{D}(A)$ is an isomorphism.
It can be shown (see the proof of \cite[Corollary 6.2]{PSY1}) that the functors
$\mrm{L}_{\mrm{seq}} \La_{\a}$ and $\mrm{R}_{\mrm{seq}} \Ga_{\a}$ do not depend
(up to canonical isomorphisms) on the generating sequence $\ba$.
These are the notions of derived completion and torsion used in most current
texts, such as \cite{SP} and \cite{BS}.

According to \cite[Corollaries 4.26 and 5.25]{PSY1} there are functorial
commutative diagrams
\begin{equation} \label{eqn:270}
\begin{tikzcd} [column sep = 6ex, row sep = 6ex]
M
\ar[dr, "{\tau^{\mrm{L}}_{M}}"]
\ar[d, "{\opn{Q}(\tau^{\msp{2} \ba}_{M})}"']
\\
\mrm{L}_{\mrm{seq}} \La_{\a} (M)
\ar[r, "{v^{\mrm{L}}_{M}}"']
&
\mrm{L} \La_{\a}(M)
\end{tikzcd}
\qquad
\begin{tikzcd} [column sep = 5ex, row sep = 6ex]
\mrm{R}_{} \Ga_{\a} (M)
\ar[r, "{v^{\mrm{R}}_{M}}"]
\ar[dr, "{\si^{\mrm{R}}_{M}}"']
&
\mrm{R}_{\mrm{seq}} \Ga_{\a} (M)
\ar[d, "{\opn{Q}(\si^{\msp{2} \ba}_{M})}"]
\\
&
M
\end{tikzcd}
\end{equation}
in $\cat{D}(A)$.

\begin{thm}[\cite{PSY1}, \cite{Po2}, {\cite[Theorem 3.11]{Ye6}}]
\label{thm:270}
Let $A$ be a ring and let $\a$ be a finitely generated ideal in it.
The following three conditions are equivalent:
\begin{itemize}
\rmitem{i} The ideal $\a$ is weakly proregular.

\rmitem{ii} For every $M \in \cat{D}(A)$ the morphism $v^{\mrm{L}}_{M}$ in the
first diagram in (\ref{eqn:270}) is an isomorphism.

\rmitem{iii} For every $M \in \cat{D}(A)$ the morphism $v^{\mrm{R}}_{M}$ in the
second diagram in (\ref{eqn:270}) is an isomorphism.
\end{itemize}
\end{thm}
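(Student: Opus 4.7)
My plan is to prove the three conditions are equivalent by first establishing (i) $\Leftrightarrow$ (iii), and then transferring the result to (ii) via the adjunction between the telescope complex and its dual. The underlying reason to handle the torsion side first is that $\mrm{R}\Ga_{\a}$ has a tractable pro-module description via $\Ga_{\a}(M) = \lim_{j\to}\opn{Hom}_A(A/\a^{j+1}, M)$, and its sequential counterpart
\[ \mrm{R}_{\mrm{seq}}\Ga_{\a}(M) \cong \opn{K}^{\vee}_{\infty}(A;\ba)\otimes_A M \cong \lim_{j \to} \, \opn{K}^{\vee}(A;\ba^{j})\otimes_A M \]
can be compared with it term by term using the canonical maps $\opn{K}(A;\ba^j)\to A/\a^{j+1}$ (via $\opn{H}^0$ and Koszul self-duality).

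For the direction (i) $\Rightarrow$ (iii), I would choose a WPR generating sequence $\ba$ for $\a$ and a K-injective resolution $M\to I$. The sequential torsion becomes $\lim_{j\to}\opn{K}^{\vee}(A;\ba^j)\otimes_A I$, while $\mrm{R}\Ga_{\a}(M)$ is computed as $\Ga_{\a}(I) = \lim_{j\to}\opn{Hom}_A(A/\a^{j+1}, I)$. Koszul self-duality identifies $\opn{K}^{\vee}(A;\ba^j)\otimes_A I$ with $\opn{Hom}_A(\opn{K}(A;\ba^j), I)[-p]$ up to natural shifts. The weak proregularity hypothesis, asserted as the pro-zero vanishing of $\{\opn{H}^{-i}(\opn{K}(A;\ba^j))\}_j$ for $i\geq 1$, is precisely the input needed to conclude that the map $\opn{K}(A;\ba^j)\to A/\a^{j+1}$ induces a pro-isomorphism on $\opn{RHom}_A(-, I)$; passing to the direct limit in $j$ then yields the isomorphism $v^{\mrm{R}}_M$. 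Conversely, for (iii) $\Rightarrow$ (i), I would test $v^{\mrm{R}}_A$ on $M=A$ with a K-injective resolution. Unraveling both sides reduces the assertion of iso for $A$ to the requirement that the inverse system of Koszul cohomologies $\{\opn{H}^{-i}(\opn{K}(A;\ba^j))\}_j$ be pro-zero, recovering Definition \ref{dfn:255}.

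For (iii) $\Leftrightarrow$ (ii), I would use that $\opn{Tel}(A;\ba)$ is a K-projective complex of free $A$-modules, so the usual tensor-hom adjunction gives a derived adjunction between $\mrm{R}_{\mrm{seq}}\Ga_{\a} = \opn{Tel}(A;\ba)\otimes_A(-)$ and $\mrm{L}_{\mrm{seq}}\La_{\a} = \opn{Hom}_A(\opn{Tel}(A;\ba), -)$. The idealistic functors $\mrm{R}\Ga_{\a}$ and $\mrm{L}\La_{\a}$ satisfy the analogous GM adjointness at the level of derived categories (with unit/counit compatible with the comparison morphisms $v^{\mrm{R}}$ and $v^{\mrm{L}}$). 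Bootstrapping from (iii), which identifies the torsion adjoints, then forces the completion adjoints to agree as well, giving $v^{\mrm{L}}_M$ an isomorphism for every $M$; and symmetrically in the other direction.

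The principal difficulty will be the last step, since a priori the idealistic adjointness between $\mrm{R}\Ga_{\a}$ and $\mrm{L}\La_{\a}$ is not obvious without already knowing the WPR-type conclusions. I would circumvent this by not appealing to full MGM duality, but rather by a direct argument: rewrite $\mrm{L}\La_{\a}$ on K-flat resolutions as a derived inverse limit of reductions modulo $\a^{j+1}$, and match this term-by-term with $\opn{Hom}_A(\opn{Tel}(A;\ba), -)$ using the pro-equivalence of $\{A/\a^{j+1}\}$ with $\{\opn{K}(A;\ba^j)\}$ already established under (i). Managing the passage between derived direct/inverse limits and concrete co/limits, and carrying the pro-zero conclusion through a $\lim^1$ argument for the completion side, is the technically delicate part that warrants the separate treatments in \cite{PSY1}, \cite{Po2} and \cite{Ye6}.
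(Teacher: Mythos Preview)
The paper does not provide its own proof of this theorem; it is stated as a citation of results from \cite{PSY1}, \cite{Po2}, and \cite[Theorem 3.11]{Ye6}, and is used as background. So there is no in-paper proof to compare against. That said, a few comments on your sketch are in order.

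Your outline of (i) $\Rightarrow$ (iii) is essentially the argument in \cite[Section 4]{PSY1}. However, your proposed converse (iii) $\Rightarrow$ (i) has a gap: testing $v^{\mrm{R}}_M$ only on $M = A$ does not recover the pro-zero condition on the Koszul cohomologies. Knowing that $\mrm{R}\Ga_{\a}(A) \cong \opn{K}^{\vee}_{\infty}(A;\ba)$ in $\cat{D}(A)$ identifies local cohomology with \v{C}ech cohomology of $A$, but this isomorphism of cohomology modules does not by itself force $\{\opn{H}^{i}(\opn{K}(A;\ba^j))\}_{j}$ to be pro-zero for $i<0$. The standard argument (see \cite[Theorem 4.24]{PSY1}) instead tests on \emph{injective} modules $I$: condition (iii) forces $\opn{H}^q(\opn{K}^{\vee}_{\infty}(A;\ba)\otimes_A I) = 0$ for all $q>0$ and all injectives $I$, and it is this vanishing that is equivalent to weak proregularity.

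Your plan for (ii) $\Leftrightarrow$ (iii) via an adjunction between the idealistic functors is also not how the cited references proceed, and you have correctly flagged the circularity: the Greenlees--May type adjointness between $\mrm{R}\Ga_{\a}$ and $\mrm{L}\La_{\a}$ in \cite{PSY1} is established \emph{under} the WPR hypothesis, so it cannot be invoked to deduce (ii) from (iii) without (i). In the literature the equivalence is assembled with (i) as the hub: (i) $\Rightarrow$ (ii) is \cite[Corollary 5.25]{PSY1}, (i) $\Rightarrow$ (iii) is \cite[Corollary 4.26]{PSY1}, (iii) $\Rightarrow$ (i) is \cite[Theorem 4.24]{PSY1}, and the harder implication (ii) $\Rightarrow$ (i) is due to Positselski \cite{Po2}. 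Your ``direct'' workaround in the last paragraph is really a re-derivation of (i) $\Rightarrow$ (ii), not an independent passage from (iii) to (ii).
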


In plain words: weak proregularity of $\a$ is the necessary and sufficient
condition for the two kinds of derived $\a$-adic completion to agree; and the
same for derived torsion. See \cite[Section 3]{Ye6} for a more detailed
discussion.

\begin{cor} \label{cor:440}
Let $\a$ be a weakly proregular ideal in the ring $A$. Then the functor
$\mrm{L} \La_{\a} : \cat{D}(A) \to \cat{D}(A)$ has finite cohomological
dimension. More precisely, suppose
$\ba = (a_1, \ldots, a_p)$ is a generating sequence of the ideal $\a$. Then the
cohomological displacement of the functor $\mrm{L} \La_{\a}$ is contained in
the integer interval $[-p, 0]$.
\end{cor}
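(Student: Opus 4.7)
The plan is to use the weak proregularity hypothesis to replace $\mrm{L} \La_{\a}$ with a functor given by an explicit formula involving a bounded complex of free modules, and then read off the displacement directly.

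First, by Theorem \ref{thm:270}, the WPR hypothesis yields a functorial isomorphism $v^{\mrm{L}}_M : \mrm{L}_{\mrm{seq}} \La_{\a}(M) \iso \mrm{L} \La_{\a}(M)$ for every $M \in \cat{D}(A)$. It therefore suffices to prove the bound for $\mrm{L}_{\mrm{seq}} \La_{\a}$. By formula (\ref{eqn:338}), given a generating sequence $\ba = (a_1, \ldots, a_p)$ of $\a$, we have an isomorphism $\mrm{L}_{\mrm{seq}} \La_{\a}(M) \cong \opn{Hom}_A(\opn{Tel}(A; \ba), M)$ in $\cat{D}(A)$, natural in $M$.

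Next, I would invoke the description of the telescope complex $\opn{Tel}(A; \ba)$ as a complex of countable rank free $A$-modules concentrated in degrees $0, \ldots, p$. Being bounded and free, it is K-projective, so the Hom-functor $\opn{Hom}_A(\opn{Tel}(A; \ba), -) : \cat{C}(A) \to \cat{C}(A)$ sends quasi-isomorphisms to quasi-isomorphisms and hence already computes the derived functor without any replacement of $M$.

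Finally, I would estimate the degrees. Given $M \in \cat{D}(A)$ with $\opn{inf}(\opn{H}(M)) = i_0$ and $\opn{sup}(\opn{H}(M)) = i_1$, apply stupid truncations to replace $M$ by a quasi-isomorphic complex concentrated in the interval $[i_0, i_1]$. From the explicit formula
\[ \opn{Hom}_A(\opn{Tel}(A; \ba), M)^n = \prod_{i = 0}^{p} \opn{Hom}_A(\opn{Tel}(A; \ba)^i, M^{n + i}) , \]
this graded module is concentrated in the degree interval $[i_0 - p, i_1]$, and the same holds for its cohomology. Therefore
\[ \opn{inf}(\opn{H}(\mrm{L} \La_{\a}(M))) - \opn{inf}(\opn{H}(M)) \geq -p , \quad \opn{sup}(\opn{H}(\mrm{L} \La_{\a}(M))) - \opn{sup}(\opn{H}(M)) \leq 0 , \]
which is exactly the assertion that the cohomological displacement of $\mrm{L} \La_{\a}$ is contained in $[-p, 0]$.

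There is no serious obstacle here: the proof is essentially a routine assembly of Theorem \ref{thm:270}, the formula (\ref{eqn:338}), and the K-projectivity and boundedness of $\opn{Tel}(A; \ba)$. The only minor point requiring care is to confirm that the degree bound $[i_0 - p, i_1]$ holds on cohomology and not merely on the underlying graded module, which is why one first passes to a quasi-isomorphic truncation of $M$.
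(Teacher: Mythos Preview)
Your proof is essentially the same as the paper's: both invoke Theorem \ref{thm:270} (equivalently \cite[Corollary 5.25]{PSY1}) to identify $\mrm{L}\La_{\a} \cong \opn{Hom}_A(\opn{Tel}(A;\ba), -)$, and then read off the displacement from the fact that the telescope complex is a complex of free modules concentrated in $[0,p]$. The paper simply stops after stating this isomorphism, leaving the degree bound implicit.

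One small correction in your final step: to replace $M$ by a quasi-isomorphic complex concentrated in $[i_0, i_1]$ you need \emph{smart} truncations, not stupid ones; stupid truncations alter the cohomology at the boundary degrees and do not yield a quasi-isomorphic complex. With that adjustment your degree computation goes through (and handles the unbounded cases $i_0 = -\infty$ or $i_1 = \infty$ by truncating only on the finite side).
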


\begin{proof}
Let $T := \opn{Tel}(A; \ba)$ be the telescope complex associated to $\ba$.
It is a complex of free $A$-modules concentrated in the integer interval
$[0, p]$. According to Theorem \ref{thm:270}, or to
\cite[Corollary 5.25]{PSY1}, there is an isomorphism of
functors $\mrm{L} \La_{\a} \cong \opn{Hom}_A(T, -)$.
\end{proof}

Here is the main theorem of the paper \cite{PSY1}.

\begin{thm}[MGM Equivalence, {\cite[Theorem 1.1]{PSY1}}] \label{thm:271}
Let $\a$ be a weakly proregular ideal in the ring $A$. Then:
\begin{enumerate}
\item For every $M \in \cat{D}(A)$ one has
$\mrm{R} \Gamma_{\a} (M) \in \cat{D}(A)_{\a \tup{-tor}}$
and
$\mrm{L} \Lambda_{\a} (M) \in \cat{D}(A)_{\a \tup{-com}}$.

\item The functor
\[ \mrm{R} \Gamma_{\a} :
\cat{D}(A)_{\a \tup{-com}} \to
\cat{D}(A)_{\a \tup{-tor}} \]
is an equivalence, with quasi-inverse $\mrm{L} \Lambda_{\a}$.
\end{enumerate}
\end{thm}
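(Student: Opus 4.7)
The plan is to reduce everything to a single K-flat, K-projective representative: the telescope complex $T := \opn{Tel}(A; \ba)$ associated to a finite generating sequence $\ba = (a_1, \ldots, a_p)$ of $\a$. Being a bounded complex of countable-rank free $A$-modules, $T$ computes both derived functors: combining Theorem \ref{thm:270} with formulas \eqref{eqn:338} and \eqref{eqn:340}, weak proregularity delivers functorial isomorphisms
\[ \mrm{R} \Ga_{\a}(M) \cong T \ot_A M, \qquad \mrm{L} \La_{\a}(M) \cong \opn{Hom}_A(T, M) \]
in $\cat{D}(A)$, under which $\si^{\mrm{R}}_M$ and $\tau^{\mrm{L}}_M$ are induced by the augmentation $T \to A$. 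Tensor--Hom adjunction at the strict complex level then upgrades to an adjoint pair $\mrm{R} \Ga_{\a} \dashv \mrm{L} \La_{\a}$ on $\cat{D}(A)$.

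The central technical input is a single quasi-isomorphism: the map $T \ot_A T \to A \ot_A T = T$ induced by the augmentation is a quasi-isomorphism. I would prove this by forming the cofiber triangle $T \to A \to C \to$ in $\cat{K}(A)$ and reducing to acyclicity of $T \ot_A C$. The complex $C$ is, up to shift, an inverse-limit of stupid truncations of Koszul complexes; its interaction with $T$ can then be analysed termwise, and the pro-vanishing of the higher Koszul cohomologies in Definition \ref{dfn:255} is precisely what forces $T \ot_A C$ to be acyclic. This is the point at which WPR is used essentially, and is the core technical content of \cite{PSY1}. Granting $T \ot_A T \simeq T$, part (1) is immediate: tensoring with $M$ yields $\mrm{R} \Ga_{\a}(\mrm{R} \Ga_{\a}(M)) \simeq \mrm{R} \Ga_{\a}(M)$, with the augmentation inducing $\si^{\mrm{R}}_{\mrm{R} \Ga_{\a}(M)}$ as its inverse, so by Theorem \ref{thm:270} we obtain $\mrm{R} \Ga_{\a}(M) \in \cat{D}(A)_{\tup{$\a$-tor}}$; dually, $\opn{Hom}_A(T, \opn{Hom}_A(T, M)) \cong \opn{Hom}_A(T \ot_A T, M) \simeq \opn{Hom}_A(T, M)$ gives $\mrm{L} \La_{\a}(M) \in \cat{D}(A)_{\tup{$\a$-com}}$.

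Part (2) then unfolds formally from the same key quasi-isomorphism. For $M \in \cat{D}(A)_{\tup{$\a$-com}}$, the chain
\[ \mrm{L} \La_{\a}(\mrm{R} \Ga_{\a}(M)) \cong \opn{Hom}_A(T, T \ot_A M) \cong \opn{Hom}_A(T \ot_A T, M) \simeq \opn{Hom}_A(T, M) \cong \mrm{L} \La_{\a}(M) \cong M \]
shows that $\mrm{L} \La_{\a} \circ \mrm{R} \Ga_{\a}$ restricts to the identity on $\cat{D}(A)_{\tup{$\a$-com}}$; a symmetric computation (replacing $\opn{Hom}_A$ by $T \ot_A -$) shows that $\mrm{R} \Ga_{\a} \circ \mrm{L} \La_{\a}$ restricts to the identity on $\cat{D}(A)_{\tup{$\a$-tor}}$. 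Combined with part (1) --- which ensures each functor lands in the correct subcategory --- this produces the asserted equivalence of triangulated categories, with naturality provided by the adjunction.
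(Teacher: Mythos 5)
The paper does not prove Theorem \ref{thm:271}: it is quoted directly from \cite[Theorem 1.1]{PSY1} in the background Section \ref{sec:back-der}, and no internal proof is supplied, so there is no proof of the paper's own to compare your sketch against. Judged on its own terms, your high-level plan --- identify $\mrm{R}\Ga_\a$ and $\mrm{L}\La_\a$ with $T \ot_A -$ and $\opn{Hom}_A(T, -)$ via Theorem \ref{thm:270}, prove the idempotence quasi-isomorphism $T \ot_A T \to T$, and extract the equivalence formally --- is indeed the backbone of the argument in \cite{PSY1}, and your treatment of part (1) is essentially correct once that quasi-isomorphism is granted, since $e \ot \opn{id}: T \ot_A T \to T$ is a quasi-isomorphism of bounded complexes of free modules and so is respected by both $(-) \ot_A M$ and $\opn{Hom}_A(-, M)$.

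Part (2), however, has a genuine gap. You write $\opn{Hom}_A(T, T \ot_A M) \cong \opn{Hom}_A(T \ot_A T, M)$ and attribute this to tensor--Hom adjunction. Tensor--Hom gives $\opn{Hom}_A(T \ot_A T, M) \cong \opn{Hom}_A(T, \opn{Hom}_A(T, M))$, which is not the same object as $\opn{Hom}_A(T, T \ot_A M)$; there is no such natural isomorphism for an infinite-rank complex $T$ (for a single free module of countable rank the two sides are already different). The morphism you actually need to control is $\opn{Hom}_A(T, T \ot_A M) \to \opn{Hom}_A(T, M)$ induced by the augmentation, which is exactly the Greenlees--May comparison $\mrm{L}\La_\a \circ \mrm{R}\Ga_\a \to \mrm{L}\La_\a$, and showing it is an isomorphism is not a formal consequence of $T \ot_A T \simeq T$ alone. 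In \cite{PSY1} this step requires additional work: one writes $T \simeq \opn{K}^\vee_\infty(A; \ba) = \varinjlim_j \opn{K}^\vee(A; \ba^j)$, exploits the genuine finite-rank self-duality $\opn{RHom}_A(\opn{K}^\vee(A; \ba^j), -) \cong \opn{K}(A; \ba^j) \ot_A -$ at each finite stage, and then controls the passage to the (co)limit. A secondary, more local confusion: you describe the cone $C$ of the augmentation $T \to A$ as ``an inverse-limit of stupid truncations of Koszul complexes,'' but $\opn{Tel}(A; \ba)$ is a mapping telescope, i.e.\ a homotopy \emph{colimit}, of the finite dual Koszul complexes $\opn{K}^\vee(A; \ba^j)$, not an inverse limit, and $C$ inherits that colimit structure.
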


The full subcategory $\cat{M}_{\tup{$\a$-com}}(A)$
of $\cat{M}(A)$ on the $\a$-adically
complete complexes is not abelian (it is not closed under cokernels, see
\cite[Example 3.20]{Ye1}). Positselski found the correct modification: the
category $\cat{M}_{\tup{$\a$-dcom}}(A)$, the full subcategory of
$\cat{M}(A)$ on the modules $M$ that are derived $\a$-adically complete in the
sequential sense; he calls them {\em $\a$-contramodules}, see
\cite[Section 1]{Po2}. The category
$\cat{M}_{\tup{$\a$-dcom}}(A)$
is a thick abelian subcategory of $\cat{M}(A)$.

The next theorem describes the full subcategories
$\cat{D}(A)_{\tup{$\a$-com}}$ and $\cat{D}(A)_{\tup{$\a$-tor}}$
of $\cat{D}(A)$ in terms of their cohomologies.

\begin{thm}[\cite{PSY1}, \cite{Po2}] \label{thm:276}
Let $A$ be a ring, let $\a$ be a weakly proregular ideal in $A$, and let
$M \in \cat{D}(A)$. Then:
\begin{enumerate}
\item The complex $M$ is derived $\a$-torsion if and only if its cohomology
modules $\opn{H}^i(M)$ are all $\a$-torsion modules.

\item The complex $M$ is derived $\a$-adically complete if and only if its
cohomology modules $\opn{H}^i(M)$ are all $\a$-contramodules.
\end{enumerate}
\end{thm}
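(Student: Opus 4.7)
My plan is to leverage weak proregularity via Theorem \ref{thm:270}, which allows us to compute $\mrm{R}\Gamma_\a$ and $\mrm{L}\La_\a$ from the bounded telescope complex $T = \opn{Tel}(A;\ba)$, and which gives both functors finite cohomological dimension (Corollary \ref{cor:440} and its torsion analogue). The overall pattern for each of (1) and (2) is the same: first verify the statement for a single module, and then bootstrap to arbitrary complexes using truncation triangles combined with a convergent hypercohomology spectral sequence.

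For part (1), the ($\Rightarrow$) direction is essentially formal: pick a K-injective resolution $J$ of $M$, so $\mrm{R}\Gamma_\a(M) \cong \Gamma_\a(J)$; every term $\Gamma_\a(J^i)$ is $\a$-torsion, and since under Convention \ref{conv:280} the subcategory $\cat{M}_{\tup{$\a$-tor}}(A)$ is thick abelian in $\cat{M}(A)$, the cohomologies $\opn{H}^i(\Gamma_\a(J)) \cong \opn{H}^i(M)$ are $\a$-torsion. For the ($\Leftarrow$) direction, I would first handle single modules: if $N$ is $\a$-torsion, then in the sequential model $\mrm{R}_{\mrm{seq}}\Gamma_\a(N) \cong \opn{K}_\infty^\vee(A;\ba) \ot_A N$, and all of the localizations $N[a_i^{-1}]$ vanish (every element of $N$ is annihilated by some power of each $a_i$); so only the degree-zero term survives and the augmentation gives $\mrm{R}_{\mrm{seq}}\Gamma_\a(N) \iso N$, which via Theorem \ref{thm:270} promotes to $N \in \cat{D}(A)_{\tup{$\a$-tor}}$. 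For a general complex $M$ with $\a$-torsion cohomology I would feed this module case into the hypercohomology spectral sequence
\[ E_2^{s,t} = \opn{R}^s\Gamma_\a(\opn{H}^t(M)) \;\Longrightarrow\; \opn{H}^{s+t}(\mrm{R}\Gamma_\a(M)) , \]
which is strongly convergent because of the $[0,p]$ bound on $s$; it degenerates to the $s = 0$ column and forces $\si^{\mrm{R}}_M$ to induce an isomorphism on cohomology, hence to be an isomorphism in $\cat{D}(A)$. In the bounded case one can alternatively iterate the truncation triangles $\tau^{\leq n}M \to M \to \tau^{>n}M$, using that $\cat{D}(A)_{\tup{$\a$-tor}}$ is a triangulated subcategory.

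Part (2) follows the same blueprint with $\mrm{L}\La_\a$ and $\a$-contramodules in place of $\mrm{R}\Gamma_\a$ and $\a$-torsion modules, exploiting Positselski's result that $\cat{M}_{\tup{$\a$-dcom}}(A)$ is a thick abelian subcategory of $\cat{M}(A)$. The ($\Leftarrow$) direction is straightforward: an $\a$-contramodule is by definition derived $\a$-adically complete in the sequential sense, hence idealistically so by Theorem \ref{thm:270}, and the truncation plus spectral sequence argument from part (1) lifts this to complexes. The main obstacle, and the harder direction, is ($\Rightarrow$): one must show that for every $A$-module $N$ the cohomologies $\opn{H}^i(\mrm{L}\La_\a(N))$ are $\a$-contramodules. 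The formal identification $\mrm{L}\La_\a(N) \cong \opn{Hom}_A(T, N)$ presents these cohomologies concretely as a bounded complex, but establishing the contramodule property is genuinely nontrivial; this is exactly the content of Positselski's contribution \cite{Po2}, which I would cite at this point. Granting that result, a K-flat (or K-projective) resolution argument parallel to part (1) reduces the complex case to the module case, and the hypercohomology spectral sequence for $\mrm{L}\La_\a$ (bounded in cohomological degree by $[-p,0]$ thanks to Corollary \ref{cor:440}) completes the proof.
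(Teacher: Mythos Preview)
The paper does not prove Theorem \ref{thm:276}; immediately after the statement it simply records that item (1) is \cite[Corollary 4.3.2]{PSY1} and item (2) is \cite[Lemma 2.3]{Po1}, with no further argument. Your proposal therefore supplies substantially more than the paper does, and your outline is essentially correct. The module-level computation for part (1)($\Leftarrow$) via the vanishing of $N[a_i^{-1}]$ for $\a$-torsion $N$ is the standard argument, and invoking the hypercohomology spectral sequence (convergent because $\mrm{R}\Ga_{\a}$ and $\mrm{L}\La_{\a}$ have finite cohomological dimension, as you note) to handle unbounded complexes in both parts is the right mechanism; you are also correct that compatibility of $\si^{\mrm{R}}_M$ (resp.\ $\tau^{\mrm{L}}_M$) with the spectral sequence is what identifies the \emph{canonical} morphism as the quasi-isomorphism. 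Two small points. First, in part (2)($\Rightarrow$) the phrase ``a K-flat (or K-projective) resolution argument \ldots\ reduces the complex case to the module case'' is misleading: the reduction is effected by the spectral sequence $\mrm{L}_s\La_{\a}(\opn{H}^t(M))\Rightarrow \opn{H}^{s+t}(\mrm{L}\La_{\a}(M))$ together with the fact that $\cat{M}_{\tup{$\a$-dcom}}(A)$ is thick abelian (so closure under subquotients and extensions propagates the contramodule property from the $E_2$-page through the finite filtration on the abutment), not by any resolution. Second, the paper attributes item (2) to \cite{Po1} rather than \cite{Po2}; you may wish to align the citation.
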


Item (1) of the theorem is \cite[Corollary 4.3.2]{PSY1}, and item (2) is
\cite[Lemma 2.3]{Po1}.

\section{Adically Flat Modules}
\label{sec:ad-flat}

The goal of this section is to prove Theorems \ref{thm:502} and
\ref{thm:115}. Throughout the section we retain Convention \ref{conv:280};
so $A$ is a ring and $\a$ is a finitely generated ideal in it.

We begin by recalling this definition from \cite{Ye4}.

\begin{dfn} \label{dfn:260}
An $A$-module $P$ is called {\em $\a$-adically flat} if
$\opn{Tor}_i^A(P, N) = 0$ for every $\a$-torsion $A$-module $N$ and every
$i > 0$.
\end{dfn}

In other texts, such as \cite{BS}, the name {\em $\a$-completely flat} is used.
Of course if $P$ is flat then it is $\a$-adically flat.
But there are counterexamples: $\a$-adically flat modules that are not flat;
see \cite[Section 7]{Ye4}.

It is easy to see that an $A$-module $P$ is $\a$-adically flat
if and only if for every $\a$-torsion $A$-module $N$, the canonical morphism
$\eta^{\mrm{L}}_{P, N} :  P \ot^{\mrm{L}}_A N \to P \ot_A N$
in $\cat{D}(A)$ is an isomorphism.

The following theorem provides a description of $\a$-adically flat and
complete modules in the WPR case.

\begin{thm} \label{thm:502}
Let $A$ be a ring, let $\a$ be a weakly proregular ideal in $A$,
and let $P$ be an $\a$-adically complete $A$-module. For $k \geq 0$ define the
ring $A_k := A / \a^{k + 1}$ and the $A_k$-module $P_k := A_k \ot_A P$.
Then the following three conditions are equivalent:
\begin{enumerate}
\rmitem{i} The $A$-module $P$ is $\a$-adically flat.

\rmitem{ii} The functor $P \ot_{A} (-)$ is exact on $\a$-torsion $A$-modules.

\rmitem{iii} For every $k \geq 0$ the $A_k$-module $P_k$ is flat.
\end{enumerate}
\end{thm}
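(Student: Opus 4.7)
The plan is to prove the implications (i) $\Rightarrow$ (ii) $\Rightarrow$ (iii) $\Rightarrow$ (i), with the first two being essentially formal and the last being the substantive step where the WPR hypothesis and the $\a$-adic completeness of $P$ are genuinely used.

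For (i) $\Rightarrow$ (ii), I would just observe that $\a$-adic flatness of $P$ says $\opn{Tor}^A_1(P, N) = 0$ for every $\a$-torsion $A$-module $N$, and this is exactly the exactness of $P \ot_A (-)$ on short exact sequences in the abelian category $\cat{M}_{\tup{$\a$-tor}}(A)$. For (ii) $\Rightarrow$ (iii), I would fix $k \geq 0$ and note that every $A_k$-module is $\a$-torsion (annihilated by $\a^{k+1}$). Since for any $A_k$-module $N$ there is a natural identification $P \ot_A N \cong P_k \ot_{A_k} N$, exactness of $P \ot_A (-)$ on short exact sequences of $A_k$-modules translates directly into flatness of $P_k$ over $A_k$.

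The bulk of the work lies in (iii) $\Rightarrow$ (i). Here I would invoke the general characterization of $\a$-adic flatness recalled in the introduction (from \cite{Ye4}): an $A$-module $Q$ is $\a$-adically flat if and only if $Q_0$ is flat over $A_0$ \emph{and} $\opn{Tor}^A_i(Q, A_0) = 0$ for all $i > 0$. Condition (iii) for $k = 0$ supplies the first half, so the whole problem reduces to establishing the Tor-vanishing $\opn{Tor}^A_i(P, A_0) = 0$ for $i > 0$.

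To handle this Tor-vanishing I would exploit that an $\a$-adically complete module is automatically an $\a$-contramodule, and therefore under WPR (by Theorems \ref{thm:270} and \ref{thm:276}) is derived $\a$-adically complete. Combined with Corollary \ref{cor:440} and the telescope description of $\mrm{L} \La_{\a}$, this yields an identification $P \cong \mrm{R} \opn{lim}_j \opn{K}(P; \ba^j)$ in $\cat{D}(A)$, where $\opn{K}(P; \ba^j) := P \ot_A \opn{K}(A; \ba^j)$. I would then compute $P \ot_A^{\mrm{L}} A_0$ by combining this derived limit presentation with (a) the pro-zero vanishing of the negative Koszul cohomologies $\opn{H}^i(\opn{K}(A; \ba^j))$ supplied by WPR and (b) the hypothesis that $P_k$ is flat over $A_k$ for every $k$, which controls the reductions of $\opn{K}(P; \ba^j)$ modulo powers of $\a$.

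The main obstacle will be orchestrating this interplay. The Koszul complex $\opn{K}(A; \ba)$ is not a resolution of $A_0$ on the nose, and WPR only provides pro-resolution information; meanwhile, $\mrm{R} \opn{lim}$ does not commute with $(-) \ot_A^{\mrm{L}} A_0$ in general. So the reduction of the computation to the individual terms $\opn{K}(P; \ba^j) \ot_A^{\mrm{L}} A_0$, where the $A_k$-flatness of the $P_k$ actually enters, requires a careful spectral-sequence / limiting argument, together with the cofinality between the ideals $(\ba^j)$ and $\a^k$.
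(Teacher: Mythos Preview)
Your arguments for (i) $\Rightarrow$ (ii) and (ii) $\Rightarrow$ (iii) are correct and match the paper's proof essentially verbatim.

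For (iii) $\Rightarrow$ (i), the paper does something much simpler than what you propose: it observes that $\{P_k\}_{k \geq 0}$ is a \emph{flat $\a$-adic system} in the sense of \cite{Ye4}, with $P \cong \lim_{\leftarrow k} P_k$ because $P$ is $\a$-adically complete, and then invokes \cite[Theorem 6.9]{Ye4}, which says precisely that the inverse limit of a flat $\a$-adic system is $\a$-adically flat when $\a$ is WPR. That is the entire proof of this implication in the paper.

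Your proposal, by contrast, tries to establish this implication from scratch via derived completeness and a derived-limit Koszul presentation. The ingredients you list are reasonable (complete $\Rightarrow$ contramodule $\Rightarrow$ derived complete under WPR; the telescope/Koszul description of $\mrm{L}\La_\a$), but you yourself flag the decisive obstacle and do not resolve it: $\mrm{R}\!\lim$ does not commute with $(-)\otimes^{\mrm{L}}_A A_0$, and you give no mechanism for how the flatness of \emph{all} the $P_k$ (not just $P_0$) together with the pro-zero Koszul cohomology actually forces $\opn{Tor}^A_i(P,A_0)=0$. Phrases like ``controls the reductions'' and ``a careful spectral-sequence / limiting argument'' are placeholders, not arguments. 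Since this step is exactly where the content of \cite[Theorem 6.9]{Ye4} lives, your proposal as written has a genuine gap at the one nontrivial point of the theorem. Either cite that theorem, as the paper does, or supply the missing limiting argument explicitly.
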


\begin{proof}
(i) $\Rightarrow$ (ii): Let
$0 \to N' \to N \to N'' \to 0$ be an exact sequence of torsion modules.
Since $\opn{Tor}^A_1(P, N'') = 0$, the sequence
\[ 0 \to P \ot_A N' \to P \ot_A N \to P \ot_A N'' \to 0 \]
is exact.

\medskip \noindent
(ii) $\Rightarrow$ (iii): For an $A_k$-module $N$ there is a canonical
isomorphism
$P \ot_A N \cong P_k \ot_{A_k} N$.
Hence $P_k \ot_{A_k} (-)$ is exact on $A_k$-modules.

\medskip \noindent
(iii) $\Rightarrow$ (i): This is the only implication that requires WPR.
The inverse system $\{ P_k \}_{k \in \N}$ is a flat $\a$-adic system, and
$P \cong \lim_{\lto k} \msp{1} P_k$. According to \cite[Theorem 6.9]{Ye4} the
$A$-module $P$ is $\a$-adically flat.
\end{proof}

\begin{thm} \label{thm:115}
Let $A$ be a ring, let $\a$ be a finitely generated ideal in $A$,
and let $P$ be a complex of $\a$-adically flat $A$-modules.
Assume that $P$ is bounded above, or that the ideal $\a$ is weakly proregular.
Then the canonical morphism
$\eta^{\mrm{L}}_{P} : \mrm{L} \La_{\a}(P) \to \La_{\a}(P)$
in $\cat{D}(A)$ is an isomorphism.
\end{thm}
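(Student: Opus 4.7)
The plan is to reduce the claim to the acyclicity of $\La_{\a}$ applied to the cone of a resolution. Take a semi-free resolution $\rho : Q \to P$ in $\cat{C}_{\mrm{str}}(A)$; since $Q$ is K-flat, $\mrm{L} \La_{\a}(P)$ is represented by $\La_{\a}(Q)$ and $\eta^{\mrm{L}}_{P}$ is represented by $\La_{\a}(\rho)$. Setting $C := \opn{cone}(\rho)$, it suffices to prove $\La_{\a}(C)$ is acyclic. Note that $C$ is acyclic with $\a$-adically flat terms $C^{i} = Q^{i+1} \oplus P^{i}$, since free modules are $\a$-adically flat and the class is closed under direct sums.

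For the bounded-above case I would take $Q$ bounded above, so $C$ is bounded above too. Let $K^{i} := \opn{ker}(d^{i}_{C})$; acyclicity yields short exact sequences
\[ 0 \to K^{i} \to C^{i} \to K^{i+1} \to 0 . \]
A downward induction starting from the top (where $K^{i} = 0$) shows each $K^{i}$ is $\a$-adically flat, because the $\opn{Tor}$ long exact sequence implies $\a$-adic flatness is preserved under kernels when the cokernel is $\a$-adically flat. Since $\opn{Tor}^{A}_{1}(A_{k}, K^{i+1}) = 0$, tensoring with $A_{k}$ keeps each sequence exact, and the inverse system $\{ A_{k} \ot_{A} K^{i} \}_{k}$ has surjective transitions; Mittag--Leffler then preserves exactness under $\lim_{\lto k}$, producing short exact sequences
\[ 0 \to \La_{\a}(K^{i}) \to \La_{\a}(C^{i}) \to \La_{\a}(K^{i+1}) \to 0 . \]
Splicing these via $\La_{\a}(C^{i}) \twoheadrightarrow \La_{\a}(K^{i+1}) \hookrightarrow \La_{\a}(C^{i+1})$ shows $\La_{\a}(C)$ is acyclic.

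For the WPR case with $P$ possibly unbounded, the downward induction has no starting point. I would instead use the finite cohomological displacement $[-p, 0]$ of $\mrm{L} \La_{\a}$ provided by Corollary \ref{cor:440}, together with the term-wise identity $\La_{\a}(\si^{\leq n} P) = \si^{\leq n} \La_{\a}(P)$. Fix $j \in \Z$ and choose $n > j + p$; in particular $n > j$. The stupid truncation $\si^{\leq n} P$ is bounded above with $\a$-adically flat terms, so $\eta^{\mrm{L}}_{\si^{\leq n} P}$ is already known to be an isomorphism by the first case. The term-wise identity gives $\opn{H}^{j}(\La_{\a}(\si^{\leq n} P)) \cong \opn{H}^{j}(\La_{\a}(P))$. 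On the derived side, the distinguished triangle $\mrm{L} \La_{\a}(\si^{\leq n} P) \to \mrm{L} \La_{\a}(P) \to \mrm{L} \La_{\a}(\si^{>n} P)$ has third term with cohomology in degrees $\geq n + 1 - p > j + 1$, so the long exact sequence yields $\opn{H}^{j}(\mrm{L} \La_{\a}(\si^{\leq n} P)) \cong \opn{H}^{j}(\mrm{L} \La_{\a}(P))$. Naturality of $\eta^{\mrm{L}}$ in these isomorphisms then forces $\opn{H}^{j}(\eta^{\mrm{L}}_{P})$ to be an isomorphism for every $j$.

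The main obstacle is the downward induction on the $K^{i}$ in the bounded-above case, which relies on both the closure of $\a$-adically flat modules under kernels with $\a$-adically flat cokernel and the Mittag--Leffler exactness of $\lim_{\lto k}$. The WPR assumption enters only through Corollary \ref{cor:440}, which rests on the telescope description of $\mrm{L} \La_{\a}$; no separate module-level lemma is needed, because the unbounded WPR case reduces directly to the bounded-above conclusion via stupid truncations.
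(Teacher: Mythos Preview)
Your proposal is correct and follows essentially the same approach as the paper's proof: in the bounded-above case you pass to the cone of a semi-free resolution, use downward induction and the $\opn{Tor}$ long exact sequence to show the kernels $K^i$ are $\a$-adically flat (this is the content of the paper's Lemmas \ref{lem:415} and \ref{lem:416}), and then invoke Mittag--Leffler; in the WPR case you reduce to the bounded-above case via stupid truncations and the finite cohomological displacement from Corollary \ref{cor:440}, exactly as the paper does. The only cosmetic differences are that the paper applies Mittag--Leffler to the entire inverse system of acyclic complexes $\{ C \ot_A A_k \}_k$ rather than to each short exact sequence separately, and that your distinguished triangle should read $\mrm{L}\La_{\a}(\si^{> n} P) \to \mrm{L}\La_{\a}(P) \to \mrm{L}\La_{\a}(\si^{\leq n} P)$ (the natural map goes $P \to \si^{\leq n} P$), though this does not affect the long exact sequence argument.
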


The proof of the theorem comes after two lemmas.

\begin{lem} \label{lem:415}
Let $0 \to P' \to P \to P'' \to 0$
be an exact sequence of $A$-modules. If $P$ and $P''$ are $\a$-adically flat,
then so is $P'$.
\end{lem}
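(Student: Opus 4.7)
The statement is a straightforward consequence of the long exact sequence of $\opn{Tor}$, applied in the definition of $\a$-adic flatness. So the plan is essentially to unwind Definition \ref{dfn:260} and read off the vanishing from a single long exact sequence; there is no subtle obstacle here.

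More concretely, fix an arbitrary $\a$-torsion $A$-module $N$. The short exact sequence $0 \to P' \to P \to P'' \to 0$ induces a long exact sequence
\[ \cdots \to \opn{Tor}^A_{i+1}(P'', N) \to \opn{Tor}^A_i(P', N) \to \opn{Tor}^A_i(P, N) \to \opn{Tor}^A_i(P'', N) \to \cdots \]
For every $i \geq 1$, the two outer terms $\opn{Tor}^A_{i+1}(P'', N)$ and $\opn{Tor}^A_i(P, N)$ vanish by the $\a$-adic flatness of $P''$ and $P$, respectively (using Definition \ref{dfn:260}, noting that $i+1 \geq 1$). Exactness forces $\opn{Tor}^A_i(P', N) = 0$ for all $i \geq 1$. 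Since $N$ was an arbitrary $\a$-torsion module, $P'$ is $\a$-adically flat, again by Definition \ref{dfn:260}.

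Since the argument is purely homological and uses neither finite generation of $\a$, completeness of the modules, nor weak proregularity, no additional hypotheses need to be invoked — in particular Theorem \ref{thm:502} is not required. The only potential pitfall is the indexing convention (whether the definition requires vanishing for $i > 0$ or $i \geq 0$), but Definition \ref{dfn:260} explicitly uses $i > 0$, which matches the range produced by the long exact sequence argument.
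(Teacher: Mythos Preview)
Your proof is correct and is precisely the argument the paper has in mind: the paper's proof simply reads ``The proof is the same as for ordinary flatness,'' and your long exact sequence argument is exactly that standard proof, specialized to $\a$-torsion test modules $N$.
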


\begin{proof}
The proof is the same as for ordinary flatness.
\end{proof}

\begin{lem} \label{lem:416}
Let $P$ be an acyclic bounded above complex of $\a$-adically flat $A$-modules,
and let $N$ be an $\a$-torsion $A$-module.
Then the complex $P \ot_A N$ is acyclic.
\end{lem}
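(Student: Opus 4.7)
The plan is to exploit that $P$ is bounded above and acyclic in order to exhibit the cycle modules of $P$ as $\a$-adically flat, and then to tensor in $N$ without losing exactness.

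First I would fix $n \in \Z$ with $P^i = 0$ for all $i > n$, and set $Z^i := \opn{Z}^i(P) \sub P^i$. Since $P$ is acyclic, for every $i$ there is a short exact sequence
\[ 0 \to Z^i \to P^i \xar{} Z^{i + 1} \to 0, \]
because $P^i / Z^i \cong \opn{B}^{i + 1}(P) = Z^{i + 1}$. At the top degree $Z^{n + 1} = 0$, so $Z^n = P^n$, which is $\a$-adically flat by hypothesis. By downward induction on $i$, using Lemma \ref{lem:415} on the displayed short exact sequence (with $P^i$ and $Z^{i + 1}$ adically flat), we conclude that every $Z^i$ is $\a$-adically flat.

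Next I would tensor each such short exact sequence with the $\a$-torsion module $N$. Because $Z^{i + 1}$ is $\a$-adically flat and $N$ is $\a$-torsion, $\opn{Tor}^A_1(Z^{i + 1}, N) = 0$, so the long exact $\opn{Tor}$-sequence gives a short exact sequence
\[ 0 \to Z^i \ot_A N \to P^i \ot_A N \to Z^{i + 1} \ot_A N \to 0 . \]
The differential $d^i_P \ot \opn{id}_N$ on $P \ot_A N$ factors as the surjection $P^i \ot_A N \surj Z^{i + 1} \ot_A N$ composed with the injection $Z^{i + 1} \ot_A N \inj P^{i + 1} \ot_A N$.

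Finally I would splice: the kernel of $d^i_P \ot \opn{id}_N$ equals the image of $Z^i \ot_A N \inj P^i \ot_A N$, while the image of $d^{i - 1}_P \ot \opn{id}_N$ is the image of $P^{i - 1} \ot_A N \surj Z^i \ot_A N \inj P^i \ot_A N$, and these coincide. Hence $\opn{H}^i(P \ot_A N) = 0$ for every $i$. I do not foresee a serious obstacle here; the only subtle ingredient is the closure of $\a$-adically flat modules under kernels of surjections onto $\a$-adically flat modules, which is exactly Lemma \ref{lem:415}, and the boundedness above is what makes the downward induction on $Z^i$ terminate.
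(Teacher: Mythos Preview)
Your proposal is correct and follows essentially the same route as the paper's proof: dissect the acyclic bounded-above complex into short exact sequences $0 \to Z^i \to P^i \to Z^{i+1} \to 0$, use Lemma~\ref{lem:415} and downward induction to see that each $Z^i$ is $\a$-adically flat, then tensor with the $\a$-torsion module $N$ and splice. The paper's version is terser, but the argument is the same.
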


\begin{proof}
The proof is by the standard splicing argument.
Say $\opn{sup}(P) = i_0$.
Let $Q^i := \opn{Z}^i(P) = \opn{B}^i(P)$. The acyclic complex $P$
can be dissected into short exact sequences
\begin{equation} \label{eqn:415}
0 \to Q^i \xar{\msp{3} \mrm{inc} \msp{3}} P^i
\xar{\msp{3} \d_P \msp{3}} Q^{i + 1} \to  0 .
\end{equation}
By Lemma \ref{lem:415} and downward induction on $i \leq i_0$, the modules
$Q^i$ are all $\a$-adically flat. Hence the sequences
\begin{equation} \label{eqn:416}
0 \to Q^i \ot_A N \xar{} P^i  \ot_A N \xar{} Q^{i + 1}  \ot_A N \to  0
\end{equation}
are exact.
The exact sequences (\ref{eqn:416}) can now be spliced to yield the acyclic
complex $P \ot_A N$.
\end{proof}

\begin{proof}[Proof of Theorem {\rm \ref{thm:115}}]
The proof is similar to those of \cite[Corollary I.5.3]{RD} and
\cite[Lemma 16.1.5]{Ye5}.
However, since the functor $\La_{\a}$ is not right exact, special care is
required.

\medskip \noindent
Step 1. Here we assume that $P$ is a bounded above complex.
Choose a quasi-isomorphism $\phi : Q \to  P$, where $Q$ is a
bounded above complex of free $A$-modules.
Let $R$ be the standard cone of $\phi$. So $R$ is an acyclic bounded above
complex of $\a$-adically flat $A$-modules.

For every $k \geq 0$ consider the complex
$R_k := R \ot_{A} A_{k}$.
By Lemma \ref{lem:416}, with $N := A_k$, the complex $R_k$ acyclic.
We obtain an inverse system of acyclic complexes $\{ R_k \}_{k \geq 0}$
with surjective transition homomorphisms
$R_{k + 1} \to R_k$.
By the Mittag-Leffler argument (see
\cite[Proposition 1.12.4]{KS} or \cite[Corollary 11.1.8]{Ye5})
the complex
$\what{R} := \lim_{\lto k} R_k$ is acyclic.
But $\what{R}$ is isomorphic, in $\cat{C}_{\mrm{str}}(A)$,
to the standard cone of the homomorphism
$\La_{\a}(\phi) : \La_{\a}(Q) \to \La_{\a}(P)$,
and hence $\La_{\a}(\phi)$ is a quasi-isomorphism.

Now let's examine the following commutative diagram in $\cat{D}(A)$~:
\[ \begin{tikzcd} [column sep = 14ex, row sep = 6ex]
\mrm{L} \La_{\a}(Q)
\ar[r, "{\mrm{L} \La_{\a}(\opn{Q}(\phi))}", "{\simeq}"']
\ar[d, "{\eta^{\mrm{L}}_{Q}}"', "{\simeq}"]
&
\mrm{L} \La_{\a}(P)
\ar[d, "{\eta^{\mrm{L}}_{P}}"]
\\
\La_{\a}(Q)
\ar[r, "{\opn{Q}(\La_{\a}(\phi))}", "{\simeq}"']&
\La_{\a}(P)
\end{tikzcd} \]
(Notice that the upright $\opn{Q}$ is the localization functor, and the italic
$Q$ is the complex of modules.)
The two horizontal arrows are isomorphisms because
$\phi$ and $\La_{\a}(\phi)$ are quasi-isomorphisms.
The morphism $\eta^{\mrm{L}}_{Q}$ is an isomorphism because $Q$ is a
semi-free complex.
The conclusion is that
$\eta^{\mrm{L}}_{P}$ is an isomorphism.

\medskip \noindent
Step 2. Here $\a$ is WPR and $P$ can be unbounded.
By Corollary \ref{cor:440}, the functor $\mrm{L} \La_{\a}$ has cohomological
displacement $[-d, 0]$ for some $d \in \N$.

To prove that $\eta^{\mrm{L}}_{P} : \mrm{L} \La_{\a}(P) \to \La_{\a}(P)$
is an isomorphism, it suffices to prove that for every $i \in \Z$ the
homomorphism
$\opn{H}^i(\eta^{\mrm{L}}_{P}) : \opn{H}^i(\mrm{L} \La_{\a}(P)) \to
\opn{H}^i(\La_{\a}(P))$
is an isomorphism.

Fix an integer $i$. Let
$P' := \opn{stt}^{\leq i + d + 1}(P)$ and
$P'' := \opn{stt}^{\geq i + d + 2}(P)$, the stupid truncations.
There are distinguished triangles
$P'' \to P \to P' \xar{\triangle}$
and
$\mrm{L} \La_{\a}(P'') \to \mrm{L} \La_{\a}(P) \to
\mrm{L} \La_{\a}(P') \xar{\triangle}$
in $\cat{D}(A)$.
The cohomological concentrations of the complexes $P''$
and $\mrm{L} \La_{\a}(P'')$ are in the integer intervals
$[i + d + 2, \infty]$ and $[i + 2, \infty]$, respectively.
Therefore the homomorphisms
$\opn{H}^i(\La_{\a}(P)) \to \opn{H}^i(\La_{\a}(P'))$
and
$\opn{H}^i(\mrm{L} \La_{\a}(P)) \to \opn{H}^i(\mrm{L} \La_{\a}(P'))$
are isomorphisms.

Consider this commutative diagram
\[ \begin{tikzcd} [column sep = 6ex, row sep = 6ex]
\opn{H}^i(\mrm{L} \La_{\a}(P))
\ar[r, "{\simeq}"]
\ar[d, "{\opn{H}^i(\eta^{\mrm{L}}_{P})}"']
&
\opn{H}^i(\mrm{L} \La_{\a}(P'))
\ar[d, "{\opn{H}^i(\eta^{\mrm{L}}_{P'})}"]
\\
\opn{H}^i(\La_{\a}(P))
\ar[r, "{\simeq}"]
&
\opn{H}^i(\La_{\a}(P'))
\end{tikzcd} \]
in $\cat{M}(A)$. The horizontal arrows are isomorphisms by the previous
paragraph.
The complex $P'$ is a bounded above complex of $\a$-adically flat modules, so
by step 1 the homomorphism
$\opn{H}^i(\eta^{\mrm{L}}_{P'})$ is an isomorphism.
Hence
$\opn{H}^i(\eta^{\mrm{L}}_{P})$ is an isomorphism.
\end{proof}

\begin{rem}
We feel that the concept of adic flatness is not sufficiently understood.
Here are a few matters that we would like to settle.

First, to find a good definition of an {\em $\a$-adically K-flat
complex}, generalizing the usual definition of K-flat complexes.
Here are two reasonable definitions, for a complex $P$~:
\begin{enumerate}
\rmitem{i}For every complex of $\a$-torsion $A$-modules $N$, the canonical
morphism
$\eta^{\mrm{L}}_{P, N} :  P \ot^{\mrm{L}}_A N \to P \ot_A N$
in $\cat{D}(A)$ is an isomorphism.

\rmitem{ii}For every acyclic complex of $\a$-torsion modules $N$, the
complex $P \ot_A N$ is acyclic.
\end{enumerate}
It is quite easy to see that condition (i) implies condition (ii).
We do not know whether the reverse implication is true.

Next, we would like to have a description of $\a$-adically flat modules in
terms of $\a$-adically complete modules.

Lastly, we are wondering how to relate the $\a$-adic flatness of
$P$ to the complete tensor product operation
$P \msp{1} \wh{\ot}_{A} M := \La_{\a}(P \ot_A M)$.
\end{rem}

\section{Recognizing Derived Complete Complexes}
\label{sec:recognize}

The aim of this section is to prove Theorems
\ref{thm:155} and \ref{thm:130}, which are a repetition of
Theorem \ref{thm:232} from the Introduction.
These theorems provide criteria for a complex $M$ to be derived $\a$-adically
complete. In this section, in addition to Convention \ref{conv:280}, we also
assume that $\a$ is a weakly proregular ideal in $A$.

In \cite{Ye1} we defined $\a$-adically free $\a$-modules; this notion was
recalled in Definition \ref{dfn:145}.
We mentioned semi-free complexes in Section \ref{sec:back-der}.
Here is a combination of these notions.

\begin{dfn} \label{dfn:155}
A complex of $A$-modules $P$ is called {\em $\a$-adically semi-free} if
there is an isomorphism
$P \cong \La_{\a}(P')$ in $\cat{C}_{\mrm{str}}(A)$, where
$P'$ is a semi-free complex of $A$-modules.
\end{dfn}

This new concept is very good for calculations, as we shall now see; but there
is a categorical difficulty with it, as explained in Remark \ref{rem:155}.

\begin{thm} \label{thm:155}
Let $A$ be a ring, let $\a$ be a weakly proregular ideal in $A$, and
let $M \in \cat{D}(A)$. The following two conditions are equivalent:
\begin{itemize}
\rmitem{i} $M$ is a derived $\a$-adically complete complex.

\rmitem{ii} There is an isomorphism $M \cong P$
in $\cat{D}(A)$, where $P$ is an $\a$-adically semi-free complex of
$A$-modules.
\end{itemize}
Furthermore, when these equivalent conditions hold, the $\a$-adically semi-free
complex $P$ in condition \tup{(ii)} can be chosen such that
$\opn{sup}(P) = \opn{sup}(\opn{H}(M))$.
\end{thm}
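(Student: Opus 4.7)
The plan is to prove both implications by combining semi-free resolutions of complexes with the MGM Equivalence (Theorem \ref{thm:271}).

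For the direction (ii) $\Rightarrow$ (i), suppose $P \cong \La_{\a}(P')$ where $P'$ is semi-free. Since semi-free complexes are K-projective and in particular K-flat, they compute the left derived functor $\mrm{L} \La_{\a}$. Hence there is a canonical isomorphism $\La_{\a}(P') \cong \mrm{L} \La_{\a}(P')$ in $\cat{D}(A)$, and by part (1) of the MGM Equivalence (Theorem \ref{thm:271}) this object lies in $\cat{D}(A)_{\tup{$\a$-com}}$. So $M \cong P$ is derived $\a$-adically complete.

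For the harder direction (i) $\Rightarrow$ (ii), I would begin by choosing a semi-free resolution $\rho : P' \to M$ with $\opn{sup}(P') = \opn{sup}(\opn{H}(M))$; such a resolution exists by the discussion in Section \ref{sec:back-der} (quoting \cite[Corollary 11.4.27]{Ye5}). Set $P := \La_{\a}(P')$, which is $\a$-adically semi-free by definition. Again using that $P'$ is K-flat, the computation of $\mrm{L} \La_{\a}$ gives a canonical isomorphism $P = \La_{\a}(P') \cong \mrm{L} \La_{\a}(P') \cong \mrm{L} \La_{\a}(M)$ in $\cat{D}(A)$. Since $M$ is assumed derived $\a$-adically complete, the morphism $\tau^{\mrm{L}}_M : M \to \mrm{L} \La_{\a}(M)$ is an isomorphism, so we obtain $M \cong P$ in $\cat{D}(A)$.

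For the final clause on suprema, observe that $\La_{\a}$ acts degreewise on complexes, so $P^i = \La_{\a}((P')^i)$ vanishes whenever $(P')^i$ does, giving $\opn{sup}(P) \leq \opn{sup}(P')$. Combining with $\opn{sup}(P) \geq \opn{sup}(\opn{H}(P)) = \opn{sup}(\opn{H}(M)) = \opn{sup}(P')$ yields equality. The main potential obstacle is verifying that $\mrm{L} \La_{\a}$ really is computed by semi-free (not merely K-flat) resolutions in the form $\mrm{L} \La_{\a}(M) \cong \La_{\a}(P')$ in $\cat{D}(A)$; this follows from the universal property of the derived functor together with the fact, noted in Section \ref{sec:back-der}, that $\mrm{L} \La_{\a}$ can be evaluated on K-flat complexes and that $\rho$ is a quasi-isomorphism. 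Weak proregularity enters only through part (1) of the MGM Equivalence, which is what guarantees that $\mrm{L} \La_{\a}$ lands in $\cat{D}(A)_{\tup{$\a$-com}}$; otherwise the argument is formal.
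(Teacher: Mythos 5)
Your proposal is correct and takes essentially the same approach as the paper's proof: in both directions you use a semi-free resolution $P' \to M$ to form $P := \La_\a(P')$, the fact that $\eta^{\mrm{L}}_{P'}$ is an isomorphism because $P'$ is semi-free (hence K-flat), and the MGM Equivalence to conclude that the essential image of $\mrm{L}\La_\a$ lands in $\cat{D}(A)_{\text{$\a$-com}}$. The paper packages the isomorphism chain into an explicit commutative diagram, but the underlying argument is identical to yours.
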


\begin{proof}
(i) $\Rightarrow$ (ii):
Let $\psi : P' \to  M$ be a semi-free resolution in
$\cat{C}_{\mrm{str}}(A)$, such that
$\opn{sup}(P') = \opn{sup}(\opn{H}(M))$.
This resolution exists by \cite[Theorem 11.4.17 and Corollary 11.4.27]{Ye5}.
Define the complex $P := \La_{\a}(P')$,
which is an $\a$-adically semi-free complex of $A$-modules, and
$\opn{sup}(P) = \opn{sup}(P') = \opn{sup}(\opn{H}(M))$.

Consider this commutative diagram
\begin{equation} \label{eqn:405}
\begin{tikzcd} [column sep = 12ex, row sep = 6ex]
P'
\ar[d, "{\tau^{\mrm{L}}_{P'}}"]
\ar[r, "{\opn{Q}(\psi)}", "{\simeq}"']
\arrow[dd, bend right = 50, start anchor = west, end anchor = west,
"{\opn{Q}(\tau^{}_{P'})}"']
&
M
\ar[d, "{\tau^{\mrm{L}}_M}", "{\simeq}"']
\\
\mrm{L} \La_{\a}(P')
\ar[r, "{\mrm{L} \La_{\a}(\opn{Q}(\psi))}", "{\simeq}"']
\ar[d, "{\eta^{\mrm{L}}_{P'}}"', "{\simeq}"]
&
\mrm{L} \La_{\a}(M)
\\
\La_{\a}(P') = P
\end{tikzcd}
\end{equation}
in $\cat{D}(A)$.
The horizontal arrows are isomorphisms because $\psi$ is a quasi-isomorphism.
The morphism $\tau^{\mrm{L}}_M$ is an isomorphism because $M$ is derived
$\a$-adically  complete. And the morphism $\eta^{\mrm{L}}_{P'}$ is an
isomorphism because $P'$
is semi-free. We get an isomorphism
\[ (\tau^{\mrm{L}}_M)^{-1} \circ \mrm{L} \La_{\a}(\opn{Q}(\psi))
\circ (\eta^{\mrm{L}}_{P'})^{-1} : P \to M \]
in $\cat{D}(A)$.

\medskip \noindent
(ii) $\Rightarrow$ (i):
It is enough to prove that the complex $P$ is derived $\a$-adically complete.

By definition there is an isomorphism
$P \cong \La_{\a}(P')$ in $\cat{C}_{\mrm{str}}(A)$ for some semi-free complex
$P'$. Because $P'$ is semi-free there is an isomorphism
$\eta^{\mrm{L}}_{P'} : \mrm{L} \La_{\a}(P') \iso \La_{\a}(P')$
in $\cat{D}(A)$. Hence
$P \cong \mrm{L} \La_{\a}(P')$ in $\cat{D}(A)$.

Now we use that fact that $\a$ is weakly proregular.
According to \cite[Theorem 1.1]{PSY1},
the essential image of the functor
$\mrm{L} \La_{\a} : \cat{D}(A) \to \cat{D}(A)$
is $\cat{D}(A)_{\tup{$\a$-com}}$.
Since $P \cong \mrm{L} \La_{\a}(P')$, it follows that
$P \in \cat{D}(A)_{\tup{$\a$-com}}$.
\end{proof}

Here is a lemma needed for the proof of Theorem \ref{thm:130}.

\begin{lem} \label{lem:146}
Assume $\a$ is WPR.
If $M$ is an $\a$-adically complete $A$-module, then it is derived
$\a$-adically complete.
\end{lem}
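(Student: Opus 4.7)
\smallskip
\noindent\textbf{Proof plan.}
By Theorem~\ref{thm:270}, weak proregularity of $\a$ gives a natural isomorphism $\mrm{L}\La_{\a}(M) \cong \opn{RHom}_A(\opn{K}^{\vee}_{\infty}(A;\ba), M)$ under which $\tau^{\mrm{L}}_M$ corresponds to the map $M \to \opn{RHom}_A(\opn{K}^{\vee}_{\infty}(A;\ba), M)$ induced by the canonical projection $\opn{K}^{\vee}_{\infty}(A;\ba) \to A$ onto degree~$0$. The task thus reduces to showing that this map is a quasi-isomorphism.

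To this end, consider the short exact sequence $0 \to C \to \opn{K}^{\vee}_{\infty}(A;\ba) \to A \to 0$ in $\cat{C}_{\mrm{str}}(A)$, where $C := \opn{stt}^{\geq 1} \opn{K}^{\vee}_{\infty}(A;\ba)$ is the non-augmented \v{C}ech complex concentrated in degrees $[1, p]$. Applying $\opn{RHom}_A(-, M)$ yields a distinguished triangle
\[ M \to \opn{RHom}_A(\opn{K}^{\vee}_{\infty}(A;\ba), M) \to \opn{RHom}_A(C, M) \to M[1], \]
so it suffices to prove $\opn{RHom}_A(C, M) = 0$. The complex $C$ is bounded, and each $C^k$ is a direct sum of localizations of the form $A[a^{-1}]$ where $a$ is a product of generators of $\a$ (hence $a \in \a$). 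A stupid-truncation induction on $C$ therefore reduces the task to showing $\opn{RHom}_A(A[a^{-1}], M) = 0$ for every such $a$.

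The core computation: the identification $A[a^{-1}] = \opn{colim}_n(A \xar{a} A \xar{a} \cdots)$ yields $\opn{RHom}_A(A[a^{-1}], M) \cong \mrm{R}\lim_n(M \xal{a} M \xal{a} \cdots)$, so it remains to show that both $\lim$ and $\lim^1$ of this tower vanish. The $\lim$ vanishes because any $a$-divisible sequence in $M$ is contained in $\bigcap_k a^k M \subseteq \bigcap_k \a^k M$, which is zero by the $\a$-adic separatedness of $M$. The $\lim^1$ also vanishes: for any $(x_n) \in \prod_n M$, the series $m_n := \sum_{k \geq 0} a^k x_{n+k}$ converges in the $\a$-adically complete $M$ (because $a \in \a$ forces $a^k x_{n+k} \in \a^k M$), and the resulting element solves $m_n - a m_{n+1} = x_n$.

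The main delicate point is this last $\lim^1$ step; the crucial observation is that the relevant convergence is $\a$-adic (not merely $a$-adic), so the argument applies uniformly to every product $a$ of generators appearing in the \v{C}ech complex, without needing $M$ to be individually $a$-adically complete for those $a$.
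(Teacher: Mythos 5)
Your proof is correct, but it takes a genuinely different route from the paper's.

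The paper resolves $M$ by a nonpositive complex $P$ of $\a$-adically \emph{free} modules (Proposition \ref{prop:145}), invokes \cite[Theorem 5.3]{Ye4} to conclude that the $P^i$ are $\a$-adically flat, and then applies Theorem \ref{thm:115} to get $\eta^{\mrm{L}}_P : \mrm{L}\La_{\a}(P) \iso \La_{\a}(P)$; since $\tau_P : P \to \La_{\a}(P)$ is already an isomorphism (each $P^i$ being complete), a diagram chase forces $\tau^{\mrm{L}}_M$ to be an isomorphism. Your approach instead passes to the sequential model via Theorem \ref{thm:270}, reduces to the vanishing of $\opn{RHom}_A(A[a^{-1}], M)$ for $a$ a product of the generators (hence $a \in \a$), and computes this directly as $\mrm{R}\lim$ of the tower $M \xal{a} M \xal{a} \cdots$, killing $\lim$ by separatedness and $\lim^1$ by an explicit geometric-series solution using $\a$-adic (not merely $a$-adic) completeness. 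Your observation that the convergence is $\a$-adic is the right one and is what makes the argument go through uniformly across the \v{C}ech terms; your identification of the degree-$0$ projection $\opn{K}^{\vee}_{\infty}(A;\ba) \to A$ as a genuine chain map, inducing $\tau^{\ba}_M$ on applying $\opn{RHom}_A(-,M)$, is correct, and the stupid-truncation dévissage on $C = \opn{stt}^{\geq 1}\opn{K}^{\vee}_{\infty}(A;\ba)$ is valid since each $C^k$ is a finite direct sum of localizations. What each route buys: your argument is more self-contained and elementary, essentially re-deriving the well-known fact that a classically complete module is derived complete in the sequential sense and then using weak proregularity only to transfer to the idealistic sense; the paper's argument is less computational but reuses the adically-free/adically-flat machinery (Proposition \ref{prop:145}, Theorem \ref{thm:115}) that is also needed for the Nakayama theorem and elsewhere, so it is the more economical choice given the paper's surrounding infrastructure.
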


\begin{proof}
We need to prove that the canonical morphism
$\tau^{\mrm{L}}_{M} :M \to \mrm{L} \La_{\a}(M)$
in $\cat{D}(A)$ is an isomorphism.

According to Proposition \ref{prop:145} there is a quasi-isomorphism
$\phi : P \to M$ in $\cat{C}_{\mrm{str}}(A)$, where $P$ is a nonpositive
complex of $\a$-adically free $A$-modules.
(As explained in Remark \ref{rem:155}, we do not know whether $P$ is an
$\a$-adically semi-free complex.)
By \cite[Theorem 5.3]{Ye4} the $A$-modules $P^i$ are all $\a$-adically
flat. Therefore, by Theorem \ref{thm:115},
$\eta^{\mrm{L}}_{P} : \mrm{L} \La_{\a}(P) \to \La_{\a}(P)$ is
an isomorphism.

Consider this commutative diagram
\begin{equation} \label{eqn:406}
\begin{tikzcd} [column sep = 12ex, row sep = 6ex]
P
\ar[d, "{\tau^{\mrm{L}}_P}"]
\ar[r, "{\opn{Q}(\phi)}", "{\simeq}"']
\arrow[dd, bend right = 50, start anchor = west, end anchor = west,
"{\opn{Q}(\tau^{}_P)}"', "{\simeq}"' near end]
&
M
\ar[d, "{\tau^{\mrm{L}}_M}"]
\\
\mrm{L} \La_{\a}(P)
\ar[r, "{\mrm{L} \La_{\a}(\opn{Q}(\phi))}", "{\simeq}"']
\ar[d, "{\eta^{\mrm{L}}_{P}}"', "{\simeq}"]
&
\mrm{L} \La_{\a}(M)
\\
\La_{\a}(P)
&
\end{tikzcd}
\end{equation}
in $\cat{D}(A)$. It is very similar to diagram (\ref{eqn:405}).
The two horizontal arrows are isomorphisms because
$\phi$ is a quasi-isomorphism.
The morphism $\opn{Q}(\tau^{}_P)$ is an isomorphism because
$\tau^{}_P : P \to \La_{\a}(P)$ is an isomorphism.
The conclusion is that both $\tau^{\mrm{L}}_{P}$ and
$\tau^{\mrm{L}}_{M}$ are isomorphisms.
\end{proof}

\begin{rem} \label{rem:150}
The converse of this lemma is false -- the $A$-module $M$ in
\cite[Example 3.20]{Ye1} is derived $\a$-adically complete, but it is not
$\a$-adically complete in the plain sense.

However, if an $A$-module $M$ is both derived $\a$-adically complete
and $\a$-adically separated, then $M$ is $\a$-adically complete
in the plain sense; see \cite{Ye2}.
\end{rem}

\begin{thm} \label{thm:130}
Let $A$ be a ring, let $\a$ be a weakly proregular ideal in $A$, and
let $M \in \cat{D}(A)$. The following two conditions are equivalent:
\begin{itemize}
\rmitem{i} $M$ is a derived $\a$-adically complete complex.

\rmitem{ii} There is an isomorphism $M \cong N$ in $\cat{D}(A)$,
where $N$ is a complex of $\a$-adically complete $A$-modules.
\end{itemize}
\end{thm}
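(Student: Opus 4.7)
The plan is to handle the two implications separately, with each one reducing to machinery already established in the paper.

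For (i) $\Rightarrow$ (ii): I would invoke Theorem \ref{thm:155}, which gives an isomorphism $M \cong P$ in $\cat{D}(A)$, where $P = \La_{\a}(P')$ for some semi-free complex $P'$. By construction $P^i = \La_{\a}((P')^i)$, and since $\a$ is finitely generated (Convention \ref{conv:280}), the functor $\La_{\a}$ is idempotent; each $P^i$ is therefore $\a$-adically complete. So $N := P$ does the job.

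For (ii) $\Rightarrow$ (i): The strategy is to verify the cohomological criterion of Theorem \ref{thm:276}(2), i.e., that every $\opn{H}^i(N)$ is an $\a$-contramodule. First, for each $i$ the module $N^i$ is $\a$-adically complete, hence derived $\a$-adically complete in the idealistic sense by Lemma \ref{lem:146}; since $\a$ is WPR, Theorem \ref{thm:270} identifies idealistic and sequential derived completeness, so $N^i \in \cat{M}_{\tup{$\a$-dcom}}(A)$. Positselski's thick abelian subcategory $\cat{M}_{\tup{$\a$-dcom}}(A)$ is closed under subobjects, quotients, and extensions, so the subquotient $\opn{H}^i(N) = \opn{Z}^i(N) / \opn{B}^i(N)$ also lies in $\cat{M}_{\tup{$\a$-dcom}}(A)$, i.e., is an $\a$-contramodule. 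Applying Theorem \ref{thm:276}(2) to $N$ then gives $N \in \cat{D}(A)_{\tup{$\a$-com}}$, and transporting along the isomorphism $M \cong N$ yields (i).

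The only place where something could go wrong is in the passage from the pointwise statement (each $N^i$ is a contramodule) to the cohomological statement (each $\opn{H}^i(N)$ is a contramodule). The unbounded case is not an obstacle here because we are using the abelian thickness of $\cat{M}_{\tup{$\a$-dcom}}(A)$ inside $\cat{M}(A)$, which applies uniformly in each single degree — there is no need for a dévissage or a Mittag-Leffler argument over all degrees at once. So the proof will be short, essentially a bookkeeping argument assembling Theorem \ref{thm:155}, Lemma \ref{lem:146}, Theorem \ref{thm:270}, and Theorem \ref{thm:276}.
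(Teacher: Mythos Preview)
Your proposal is correct and takes a genuinely different route from the paper for the implication (ii) $\Rightarrow$ (i).

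The paper does \emph{not} invoke Theorem~\ref{thm:276}(2). Instead it proves directly that $\tau^{\mrm{L}}_N : N \to \mrm{L}\La_{\a}(N)$ is an isomorphism via a three-step d\'evissage: first for a single complete module (Lemma~\ref{lem:146}), then for bounded complexes by induction on amplitude using stupid truncations, and finally for unbounded $N$ by fixing a degree $i$ and truncating to a bounded complex, using that $\mrm{L}\La_{\a}$ has finite cohomological displacement (Corollary~\ref{cor:440}). Your approach bypasses all of this by appealing to the cohomological criterion and the abelian-subcategory structure of $\cat{M}_{\tup{$\a$-dcom}}(A)$; it is shorter and handles the unbounded case for free, at the cost of importing Positselski's structural results as black boxes. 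The paper's argument is more self-contained and exhibits explicitly how weak proregularity enters through the finite cohomological dimension of $\mrm{L}\La_{\a}$.

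One correction to your wording: $\cat{M}_{\tup{$\a$-dcom}}(A)$ is \emph{not} closed under arbitrary subobjects and quotients --- for instance, with $A = \Z$ and $\a = (p)$, the submodule $\Z \subset \Z_p$ is not a $p$-contramodule. What the paper asserts (and what Positselski proves) is that it is a full abelian subcategory closed under extensions: kernels and cokernels of maps \emph{between contramodules} are contramodules. This is exactly what you need, since $\opn{Z}^i(N) = \ker(N^i \to N^{i+1})$, $\opn{B}^i(N) \cong N^{i-1}/\opn{Z}^{i-1}(N)$, and $\opn{H}^i(N) = \opn{Z}^i(N)/\opn{B}^i(N)$ are all kernels or cokernels of maps between modules already known to be contramodules. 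So the argument stands; just replace ``subobjects, quotients'' by ``kernels, cokernels''.
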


\begin{proof}
(i) $\Rightarrow$ (ii): Use the implication
(i) $\Rightarrow$ (ii) in Theorem \ref{thm:155}, and take
$N := P$.

\medskip \noindent
(ii) $\Rightarrow$ (i):
We will prove that the complex $N$ is derived $\a$-adically
complete, namely that
$\tau^{\mrm{L}}_N : N \to \mrm{L} \La_{\a}(N)$ is an isomorphism.
It will be done in three steps, corresponding to the amplitude of $N$.

\medskip \noindent
Step 1. Here we assume the amplitude of $N$ is $\leq 0$, i.e.\
$N = N^i[-i]$ for some integer $i$.
By Lemma \ref{lem:146} the $A$-module $N^i$ is derived $\a$-adically
complete. Since $\cat{D}(A)_{\tup{$\a$-com}}$ is a full triangulated
subcategory of $\cat{D}(A)$, it follows that $N$ also belongs to it.

\medskip \noindent
Step 2. This is an inductive step.
Here we assume that $N$ has finite amplitude $l \geq 1$,
and that the assertion holds for every complex of complete modules
with amplitude $< l$.
The concentration of $N$ is an integer interval
$[i_0, i_1]$ with $i_0, i_1 \in \Z$ and $i_1 - i_0 = l$.
Let $N' := \opn{stt}^{\geq i_1}(N) = N^{i_1}[-i_1]$,
the stupid truncation above $i_1$, and let
$N'' := \opn{stt}^{\leq i_1 - 1}(N)$.
There is a distinguished triangle
$N' \to N \to N'' \xar{\msp{3} \triangle \msp{3}}$
in $\cat{D}(A)$.
Since the amplitudes of $N'$ and $N''$ are $< l$,
these belong to $\cat{D}(A)_{\tup{$\a$-com}}$.
Therefore also $N$ belongs to $\cat{D}(A)_{\tup{$\a$-com}}$.

\medskip \noindent
Step 3. Here $N$ is allowed to be unbounded in any direction.
We shall prove that for a fixed integer $i$, the homomorphism
$\opn{H}^i(\tau^{\mrm{L}}_N) : \opn{H}^i(N) \to \opn{H}^i(\mrm{L} \La_{\a}(N))$
is an isomorphism of $A$-modules.
The proof is very similar to step 2 in the proof of Theorem \ref{thm:115}.

Since $\a$ is WPR, the cohomological displacement of the functor
$\mrm{L} \La_{\a}$ is inside the integer interval $[-d, 0]$ for some
$d \in \N$.

Let
$N_1 := \opn{stt}^{\geq i - 1}(N)$, and let
$N_2 := \opn{stt}^{\leq i + 1 + d}(N_1)$.
There are homomorphisms $N_1 \to N$ and $N_1 \to N_2$ in
$\cat{C}_{\mrm{str}}(A)$, inducing isomorphism
$\opn{H}^i(N_1) \to \opn{H}^i(N)$ and
$\opn{H}^i(N_1) \to \opn{H}^i(N_2)$.

Next, let $N'_1 := \opn{stt}^{\leq i - 2}(N)$.
There is a distinguished triangle
$N_1 \to N \to N_1' \xar{\triangle}$ in $\cat{D}(A)$,
and an induced distinguished triangle
$\mrm{L} \La_{\a}(N_1) \to \mrm{L} \La_{\a}(N) \to
\mrm{L} \La_{\a}(N_1') \xar{\triangle}$.
The complex $N'_1$ has cohomological concentration in the integer interval
$[-\infty, i - 2]$. Due to the fact that the cohomological  displacement of the
functor $\mrm{L} \La_{\a}$ is inside the integer interval $[-d, 0]$,
it follows that $\mrm{L} \La_{\a}(N'_1)$ has cohomological concentration in the
integer interval $[-\infty, i - 2]$.
Therefore the homomorphism
$\opn{H}^i(\mrm{L} \La_{\a}(N_1)) \to \opn{H}^i(\mrm{L} \La_{\a}(N))$
is an isomorphism.

Define the complex $N'_2 := \opn{stt}^{\geq i + 2 + d}(N_1)$.
As above, there is a distinguished triangle
$\mrm{L} \La_{\a}(N'_2) \to \mrm{L} \La_{\a}(N_1) \to
\mrm{L} \La_{\a}(N_2) \xar{\triangle}$
in $\cat{D}(A)$. Due to the cohomological displacement, the complex
$\mrm{L} \La_{\a}(N'_2)$ has cohomological concentration in the
integer interval $[i + 2, \infty]$.
Therefore the homomorphism
$\opn{H}^i(\mrm{L} \La_{\a}(N_1)) \to \opn{H}^i(\mrm{L} \La_{\a}(N_2))$
is an isomorphism.

Consider these commutative diagrams in $\cat{M}(A)$. We just proved that the
horizontal arrows are isomorphisms.
\begin{equation} \label{eqn:442}
\begin{tikzcd} [column sep = 4ex, row sep = 6ex]
\opn{H}^i(N_1)
\ar[d, "{\opn{H}^i(\tau^{\mrm{L}}_{N_1}})"]
\ar[r, "{\simeq}"]
&
\opn{H}^i(N)
\ar[d, "{\opn{H}^i(\tau^{\mrm{L}}_{N})}"']
\\
\opn{H}^i(\mrm{L} \La_{\a}(N_1))
\ar[r, "{\simeq}"]
&
\opn{H}^i(\mrm{L} \La_{\a}(N))
\end{tikzcd}
\qquad
\begin{tikzcd} [column sep = 6ex, row sep = 6ex]
\opn{H}^i(N_1)
\ar[d, "{\opn{H}^i(\tau^{\mrm{L}}_{N_1}})"]
\ar[r, "{\simeq}"]
&
\opn{H}^i(N_2)
\ar[d, "{\opn{H}^i(\tau^{\mrm{L}}_{N_2})}"']
\\
\opn{H}^i(\mrm{L} \La_{\a}(N_1))
\ar[r, "{\simeq}"]
&
\opn{H}^i(\mrm{L} \La_{\a}(N_2))
\end{tikzcd}
\end{equation}
The complex $N_2$ is bounded. By step 2,
$\opn{H}^i(\tau^{\mrm{L}}_{N_2})$ is an isomorphism.
The second diagram shows that
$\opn{H}^i(\tau^{\mrm{L}}_{N_1})$ is an isomorphism, and then the first
diagram shows that
$\opn{H}^i(\tau^{\mrm{L}}_{N})$ is an isomorphism.
\end{proof}

\section{The Structure of the Category of Derived Complete Complexes}
\label{sec:struct-cat-dc}

In this section we study the category $\cat{D}(A)_{\tup{$\a$-com}}$
of derived $\a$-adically complete complexes. The main result is
Theorem \ref{thm:135}.
Theorem \ref{thm:401} is required for Theorem \ref{thm:135} to make sense, as
we explain in Remark \ref{rem:155}.
In addition to Convention \ref{conv:280}, we also assume
that $\a$ is a weakly proregular ideal in $A$.

K-projective complexes were recalled in Section \ref{sec:back-der}.
Here is an analogue for the adic setting -- a new definition.

\begin{dfn} \label{dfn:400}
A complex of $A$-modules $P$ is called {\em $\a$-adically
K-projective} if it satisfies these two conditions:
\begin{itemize}
\rmitem{a} $P$ is a complex of $\a$-adically complete $A$-modules.

\rmitem{b} Suppose $N$ is an acyclic complex of $\a$-adically
complete $A$-modules. Then the complex $\opn{Hom}_{A}(P, N)$ is acyclic.
\end{itemize}
\end{dfn}

\begin{prop} \label{prop:400}
If $P$ is an $\a$-adically semi-free complex, then it is an
$\a$-adically K-projective complex.
\end{prop}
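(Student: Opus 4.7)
The plan is to unpack the definition of $\a$-adic semi-freeness and then reduce to the standard fact that semi-free complexes are K-projective. Write $P = \La_{\a}(P')$ in $\cat{C}_{\mrm{str}}(A)$, where $P'$ is a semi-free complex of $A$-modules, so each $P'^{\msp{1} i}$ is free and $P^i = \La_{\a}(P'^{\msp{1} i})$.

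First I would verify condition (a) of Definition \ref{dfn:400}. Under Convention \ref{conv:280}, since $\a$ is finitely generated, the functor $\La_{\a}$ is idempotent: for any $A$-module $M$, the module $\La_{\a}(M)$ is $\a$-adically complete. Applying this componentwise to $P = \La_{\a}(P')$ shows that each $P^i$ is $\a$-adically complete.

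Next I would verify condition (b). Let $N$ be an acyclic complex of $\a$-adically complete $A$-modules. The key step is to identify the Hom complexes $\opn{Hom}_{A}(P, N)$ and $\opn{Hom}_{A}(P', N)$. For each pair of indices $i, j$, Proposition \ref{prop:440}(3) applied to $M := P'^{\msp{1} i}$ and $N := N^j$ (which is $\a$-adically complete by assumption) yields a bijection
\[ \opn{Hom}_{A}(P^i, N^j) = \opn{Hom}_{A}(\La_{\a}(P'^{\msp{1} i}), N^j) \iso \opn{Hom}_{A}(P'^{\msp{1} i}, N^j) , \]
induced by $\tau_{P'^{\msp{1} i}} : P'^{\msp{1} i} \to P^i$. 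Naturality of $\tau$ ensures these bijections assemble into an isomorphism
\[ \opn{Hom}_{A}(P, N) \iso \opn{Hom}_{A}(P', N) \]
in $\cat{C}_{\mrm{str}}(A)$, not merely as graded $A$-modules.

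Finally, since $P'$ is semi-free it is K-projective, so $\opn{Hom}_{A}(P', N)$ is acyclic for any acyclic complex $N$ (in particular for our acyclic complex of complete modules). The isomorphism above then shows $\opn{Hom}_{A}(P, N)$ is acyclic, establishing condition (b). The only delicate point is checking that the adjunction in Proposition \ref{prop:440}(3) is sufficiently natural in both arguments to yield a strict isomorphism of Hom complexes, but this follows directly from the construction of $\tau_{P'^{\msp{1} i}}$ and the fact that the Hom complex differentials are defined componentwise by post- and precomposition.
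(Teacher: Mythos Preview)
Your proof is correct and follows essentially the same route as the paper's own argument: write $P \cong \La_{\a}(P')$ for a semi-free $P'$, invoke Proposition~\ref{prop:440}(3) to identify $\opn{Hom}_{A}(P, N) \cong \opn{Hom}_{A}(P', N)$ for $N$ a complex of $\a$-adically complete modules, and conclude acyclicity from the K-projectivity of $P'$. Your version is slightly more explicit in verifying condition~(a) and in justifying that the degreewise bijections assemble into a strict isomorphism of complexes, but the substance is identical.
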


\begin{proof}
Choose an isomorphism $P \cong  \La_{\a}(P')$ in $\cat{C}_{\mrm{str}}(A)$, for
some semi-free complex $P'$.
Let $N$ be an acyclic complex of $\a$-adically complete
$A$-modules. By Proposition \ref{prop:440}(3) there is an isomorphism
$\opn{Hom}_{A}(P, N) \cong \opn{Hom}_{A}(P', N)$
in $\cat{C}_{\mrm{str}}(A)$.
Since $P'$ is a K-projective complex of $A$-modules, the complex
$\opn{Hom}_{A}(P', N)$ is acyclic.
\end{proof}

\begin{lem} \label{lem:405}
Let $P$ and $Q$ be $\a$-adically K-projective complexes, and let
$\phi : P \to Q$ be a homomorphism in $\cat{C}_{\mrm{str}}(A)$.
The following conditions are equivalent:
\begin{itemize}
\rmitem{i} $\phi$ is a quasi-isomorphism.

\rmitem{ii} $\phi$ is a homotopy equivalence.
\end{itemize}
\end{lem}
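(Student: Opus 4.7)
The implication (ii) $\Rightarrow$ (i) is immediate, since every homotopy equivalence induces an isomorphism on cohomology. The substantive content is (i) $\Rightarrow$ (ii), and I would prove it by adapting the standard argument that a quasi-isomorphism between ordinary K-projective complexes is a homotopy equivalence.

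The plan is to show that the standard mapping cone $C$ of $\phi$ in $\cat{C}_{\mrm{str}}(A)$ is contractible. First observe that as a graded module, $C^{\natural} = Q^{\natural} \oplus P[1]^{\natural}$; since each $P^i$ and $Q^i$ is $\a$-adically complete, and finite direct sums commute with the inverse limit $\La_{\a} = \lim_{\lto k} A_k \ot_A (-)$, the cone $C$ is again a complex of $\a$-adically complete $A$-modules. Because $\phi$ is a quasi-isomorphism, $C$ is acyclic.

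Next, I would verify that $C$ is itself $\a$-adically K-projective. Given any acyclic complex $N$ of $\a$-adically complete $A$-modules, the split-exact sequence $0 \to Q \to C \to P[1] \to 0$ in $\cat{C}_{\mrm{str}}(A)$ yields a short exact sequence of complexes of abelian groups
\[ 0 \to \opn{Hom}_{A}(P[1], N) \to \opn{Hom}_{A}(C, N) \to \opn{Hom}_{A}(Q, N) \to 0 . \]
Actually it is cleaner to use the induced distinguished triangle and note that $\opn{Hom}_A(P, N)$ and $\opn{Hom}_A(Q, N)$ are both acyclic by the $\a$-adic K-projectivity of $P$ and $Q$; the long exact sequence in cohomology then forces $\opn{Hom}_A(C, N)$ to be acyclic as well. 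Thus $C$ satisfies condition (b) of Definition \ref{dfn:400}, so it is $\a$-adically K-projective.

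Finally, apply this property with $N := C$, which is legal since $C$ is an acyclic complex of complete modules. This gives that $\opn{Hom}_{A}(C, C)$ is acyclic, so
\[ \opn{Hom}_{\cat{K}(A)}(C, C) = \opn{H}^0(\opn{Hom}_{A}(C, C)) = 0 , \]
forcing $\opn{id}_C = 0$ in $\cat{K}(A)$, i.e., $C$ is contractible. By the standard triangulated argument, a strict morphism is a homotopy equivalence iff its mapping cone is contractible, so $\phi$ is a homotopy equivalence. The only delicate point is the stability of the K-projectivity property under forming cones; everything else is a formal bookkeeping with the $\opn{Hom}$-acyclicity criterion of Definition \ref{dfn:400}.
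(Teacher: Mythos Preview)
Your proof is correct and rests on the same core observation as the paper's: the standard cone $C$ of $\phi$ is an acyclic complex of $\a$-adically complete modules, and the sequence $0 \to Q \to C \to P[1] \to 0$ is degree-wise split. The paper finishes a bit differently: instead of first proving that $C$ is itself $\a$-adically K-projective and then contractible, it applies $\opn{Hom}_A(Q,-)$ and $\opn{Hom}_A(P,-)$ to the split sequence and uses the acyclicity of $\opn{Hom}_A(Q,C)$ and $\opn{Hom}_A(P,C)$ to construct the homotopy inverse $\bar\psi$ of $\bar\phi$ explicitly. Your route via contractibility of the cone is the other standard way to package this argument; a small bonus is that you establish en route that cones of maps between $\a$-adically K-projective complexes are again $\a$-adically K-projective, which the paper proves separately in Proposition~\ref{prop:430}.
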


\begin{proof}
The implication (ii) $\Rightarrow$ (i) it trivial.

The other implication is proved like the usual result
(cf.\ \cite[Corollary 10.2.14]{Ye3}), but taking care about completeness.
Suppose $\phi : P \to Q$ is a quasi-isomorphism in $\cat{C}_{\mrm{str}}(A)$.
We need to prove that the morphism
$\bar{\phi} := \opn{P}(\phi) : P \to Q$ in $\cat{K}(A)$ is an isomorphism.

Let $N$ be the standard cone of $\phi$, see \cite[Section 4.2]{Ye3}.
So $N$ is an acyclic complex of $\a$-adically complete $A$-modules.
Consider the standard short exact sequence
\begin{equation} \label{eqn:430}
 0 \to Q \to N \to P[1] \to 0
\end{equation}
in $\cat{C}_{\mrm{str}}(A)$,
see \cite[Section 4.2]{Ye5}.
This is a split exact sequence in the category $\cat{G}_{\mrm{str}}(A)$ of
graded $A$-modules.
Therefore the sequence
\begin{equation} \label{eqn:445}
0 \to \opn{Hom}_{A}(Q, Q) \to \opn{Hom}_{A}(Q, N) \to
\opn{Hom}_{A}(Q, P[1]) \to 0
\end{equation}
in $\cat{C}_{\mrm{str}}(A)$, obtained from (\ref{eqn:430}) by
applying the functor $\opn{Hom}_{A}(Q, -)$, is exact.
Since the complex $\opn{Hom}_{A}(Q, N)$ is acyclic, in the
long exact cohomology sequence associated to (\ref{eqn:445}),
the homomorphism
\[ \opn{H}^0 \bigl( \opn{Hom}_{A}(\opn{id}_{Q}, \phi) \bigr) :
\opn{H}^0 \bigl( \opn{Hom}_{A}(Q, P) \bigr) \to
\opn{H}^0 \bigl( \opn{Hom}_{A}(Q, Q) \bigr) \]
is an isomorphism. This means that the function
\[ \opn{Hom}_{\cat{K}(A)}(\opn{id}_{Q}, \bar{\phi}) :
\opn{Hom}_{\cat{K}(A)}(Q, P) \to \opn{Hom}_{\cat{K}(A)}(Q, Q) , \msp{5}
\bar{\psi} \mapsto \bar{\phi} \circ \bar{\psi} \]
is a bijection. Let
$\bar{\psi} : Q \to P$ be the morphism in $\cat{K}(A)$ satisfying
$\bar{\phi} \circ \bar{\psi} = \opn{id}_{Q}$.

It remains to prove that
$\bar{\psi} \circ \bar{\phi} = \opn{id}_{P}$.
By the same arguments as above, only now applying
$\opn{Hom}_{A}(P, -)$ to (\ref{eqn:430}), the function
\[ \opn{Hom}_{\cat{K}(A)}(\opn{id}_{P}, \bar{\phi}) :
\opn{Hom}_{\cat{K}(A)}(P, P) \to \opn{Hom}_{\cat{K}(A)}(P, Q)  , \msp{5}
\bar{\si} \mapsto \bar{\phi} \circ \bar{\si} \]
is a bijection.
Consider the morphisms $\bar{\psi} \circ \bar{\phi}$ and $\opn{id}_{P}$
in $\opn{Hom}_{\cat{K}(A)}(P, P)$. We have
\[ \bar{\phi} \circ (\bar{\psi} \circ \bar{\phi}) =
\opn{id}_{Q} \circ \msp{2} \bar{\phi} = \bar{\phi} =
\bar{\phi} \circ \opn{id}_{P} . \]
By canceling $\bar{\phi}$ we see that
$\bar{\psi} \circ \bar{\phi} = \opn{id}_{P}$.
\end{proof}

Here a useful characterization of $\a$-adically K-projective complexes.

\begin{thm} \label{thm:401}
Let $A$ be a ring, and let $\a$ be a weakly proregular ideal in $A$.
The following three conditions are equivalent for a complex $P$ of $\a$-adically
complete $A$-modules.
\begin{itemize}
\rmitem{i} $P$ is an $\a$-adically K-projective complex.

\rmitem{ii} There is a homotopy equivalence
$Q \to P$ in $\cat{C}_{\mrm{str}}(A)$, where $Q$ is an
$\a$-adically semi-free complex.

\rmitem{iii} For every complex of $\a$-adically complete $A$-modules $M$,
the canonical morphism
\[ \eta^{\mrm{R}}_{P, M} : \opn{Hom}_{A}(P, M) \to \opn{RHom}_{A}(P, M) \]
in $\cat{D}(A)$ is an isomorphism.
\end{itemize}
Furthermore, when these equivalent conditions hold, the $\a$-adically
semi-free complex $Q$ in condition {\rm (ii)} can be chosen such that
$\opn{sup}(Q) = \opn{sup}(\opn{H}(P))$.
\end{thm}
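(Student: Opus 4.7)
The plan is to run the cycle (iii) $\Rightarrow$ (i), (ii) $\Rightarrow$ (i), (i) $\Rightarrow$ (ii), (ii) $\Rightarrow$ (iii). The first two implications are short. For (iii) $\Rightarrow$ (i): if $N$ is an acyclic complex of $\a$-adically complete modules then $N \cong 0$ in $\cat{D}(A)$, so $\opn{RHom}_{A}(P, N) \cong 0$, and the hypothesis (iii) forces $\opn{Hom}_{A}(P, N) \cong 0$ in $\cat{D}(A)$, i.e.\ acyclic. For (ii) $\Rightarrow$ (i): the complex $Q$ is $\a$-adically K-projective by Proposition \ref{prop:400}, and the functor $\opn{Hom}_{A}(-, N)$ sends the homotopy equivalence $Q \to P$ to a homotopy equivalence on Hom complexes, transferring acyclicity from $\opn{Hom}_{A}(Q, N)$ to $\opn{Hom}_{A}(P, N)$.

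The main step is (i) $\Rightarrow$ (ii). Since $P$ is a complex of $\a$-adically complete modules, Theorem \ref{thm:130} says $P$ is derived $\a$-adically complete, and Theorem \ref{thm:155} then furnishes a semi-free resolution $\psi : P' \to P$ in $\cat{C}_{\mrm{str}}(A)$ with $\opn{sup}(P') = \opn{sup}(\opn{H}(P))$. I set $Q := \La_{\a}(P')$, an $\a$-adically semi-free complex with the desired degree bound. Because each $P^i$ is already $\a$-adically complete, $\tau_P : P \to \La_{\a}(P)$ is the identity, so $\La_{\a}(\psi) : Q \to P$ is a legitimate strict morphism. To prove it is a quasi-isomorphism I invoke the naturality square for $\eta^{\mrm{L}}$ applied to $\opn{Q}(\psi)$: the top edge $\mrm{L} \La_{\a}(\opn{Q}(\psi))$ is an isomorphism (as $\psi$ is a quasi-isomorphism), $\eta^{\mrm{L}}_{P'}$ is an isomorphism ($P'$ is semi-free), and $\eta^{\mrm{L}}_P$ is an isomorphism (the identity $\opn{Q}(\tau_P) = \eta^{\mrm{L}}_P \circ \tau^{\mrm{L}}_P$ with $\tau_P = \opn{id}_P$ and $\tau^{\mrm{L}}_P$ an iso forces it so). By commutativity $\opn{Q}(\La_{\a}(\psi))$ is an isomorphism in $\cat{D}(A)$. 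Finally, $Q$ is $\a$-adically K-projective by Proposition \ref{prop:400}, and $P$ by assumption, so Lemma \ref{lem:405} upgrades this quasi-isomorphism to a homotopy equivalence in $\cat{C}_{\mrm{str}}(A)$.

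For (ii) $\Rightarrow$ (iii) I would use functoriality of $\eta^{\mrm{R}}_{-, M}$ along the homotopy equivalence $Q \to P$ to reduce to the case $P = Q = \La_{\a}(Q')$ with $Q'$ semi-free. The plan is then to fit $\eta^{\mrm{R}}_{Q, M}$ into the naturality square induced by $\tau_{Q'} : Q' \to Q$, whose corners are $\opn{Hom}_{A}(Q, M)$, $\opn{RHom}_{A}(Q, M)$, $\opn{Hom}_{A}(Q', M)$ and $\opn{RHom}_{A}(Q', M)$. The left vertical is an isomorphism of complexes by Proposition \ref{prop:440}(3), applied degree by degree to the complete modules $M^i$; the bottom horizontal is an isomorphism in $\cat{D}(A)$ because $Q'$ is semi-free and hence K-projective; and the right vertical is an isomorphism in $\cat{D}(A)$ because $\mrm{L} \La_{\a}$ is a reflector onto $\cat{D}(A)_{\tup{$\a$-com}}$, which is a consequence of the MGM equivalence (Theorem \ref{thm:271}). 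Explicitly, $\eta^{\mrm{L}}_{Q'}$ identifies $\mrm{L} \La_{\a}(Q') \cong Q$, and since every shift $M[n]$ is derived complete (Theorem \ref{thm:130}), precomposition with $\tau^{\mrm{L}}_{Q'}$ furnishes a bijection $\opn{Hom}_{\cat{D}(A)}(Q, M[n]) \iso \opn{Hom}_{\cat{D}(A)}(Q', M[n])$, which is exactly the right vertical arrow. The main obstacle I anticipate is the bookkeeping required to check that this last square commutes and that its right vertical really is the reflector adjunction rather than some competing map; once that is settled, a short diagram chase makes $\eta^{\mrm{R}}_{Q, M}$, and hence $\eta^{\mrm{R}}_{P, M}$, an isomorphism. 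The degree bound on $Q$ in the final sentence of the theorem is inherited directly from the corresponding bound in Theorem \ref{thm:155}.
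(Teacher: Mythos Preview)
Your proof is correct and follows essentially the same route as the paper: the implications (iii) $\Rightarrow$ (i) and (i) $\Rightarrow$ (ii) are argued identically (semi-free resolution, complete it, show the completed map is a quasi-isomorphism via the $\eta^{\mrm{L}}$ square, then upgrade with Lemma \ref{lem:405}), and your (ii) $\Rightarrow$ (iii) uses exactly the same three ingredients---Proposition \ref{prop:440}(3), K-projectivity of the semi-free $Q'$, and the reflector property of $\mrm{L}\La_{\a}$ from the MGM equivalence---only packaged as a single square rather than the paper's longer diagram. One small quibble: Theorem \ref{thm:155} gives an isomorphism in $\cat{D}(A)$, not a semi-free resolution in $\cat{C}_{\mrm{str}}(A)$; what you actually need (and what the paper invokes directly) is the standard existence of semi-free resolutions with the stated degree bound, as in \cite[Corollary 11.4.27]{Ye5}.
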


\begin{proof}
(i) $\Rightarrow$ (ii):
According to Theorem \ref{thm:130} the complex $P$ is derived $\a$-adically
complete. Choose some semi-free resolution
$\rho : Q' \to P$ in $\cat{C}_{\mrm{str}}(A)$,
such that $\opn{sup}(Q') = \opn{sup}(\opn{H}(P))$.
Define the $\a$-adically semi-free complex $Q := \La_{\a}(Q')$, and the
homomorphism
$\what{\rho} := \La_{\a}(\rho) : Q \to P$. We want to prove that
$\what{\rho} : Q \to P$ is a homotopy equivalence.

Let's examine the next commutative diagram
\begin{equation} \label{eqn:407}
\begin{tikzcd} [column sep = 12ex, row sep = 6ex]
Q'
\ar[d, "{\tau^{\mrm{L}}_{Q'}}"]
\ar[r, "{\opn{Q}(\rho)}", "{\simeq}"']
\arrow[dd, bend right = 50, start anchor = west, end anchor = west,
"{\opn{Q}(\tau^{}_{Q'})}"']
&
P
\ar[d, "{\tau^{\mrm{L}}_P}"', "{\simeq}"]
\arrow[dd, bend left = 50, start anchor = east, end anchor = east,
"{\opn{Q}(\tau^{}_{P})}", "{\simeq}"']
\\
\mrm{L} \La_{\a}(Q')
\ar[r, "{\mrm{L} \La_{\a}(\opn{Q}(\rho))}", "{\simeq}"']
\ar[d, "{\eta^{\mrm{L}}_{Q'}}"', "{\simeq}"]
&
\mrm{L} \La_{\a}(P)
\ar[d, "{\eta^{\mrm{L}}_{P}}"]
\\
\La_{\a}(Q') = Q
\ar[r, "{\opn{Q}(\what{\rho})}"]
&
\La_{\a}(P) = P
\end{tikzcd}
\end{equation}
in $\cat{D}(A)$.
(Note that upright $\opn{Q}$ is the functor, and italic $Q$ is the complex.)
The morphisms $\opn{Q}(\rho)$ and $\mrm{L} \La_{\a}(\opn{Q}(\rho))$
are isomorphisms because $\rho$ is a quasi-iso\-morphism.
The morphism $\tau^{\mrm{L}}_P$ is an isomorphism because $P$ is derived
$\a$-adically complete.
The morphism $\opn{Q}(\tau_P)$ is an isomorphism because $\tau_P$ is an
isomorphism.
The morphism $\eta^{\mrm{L}}_{Q'}$ is an isomorphism because $Q'$ is
semi-free. By the commutativity of the diagram it follows that
all the arrows in it are isomorphisms.
In particular, $\opn{Q}(\what{\rho}) : Q \to P$ is an isomorphism.
Thus $\what{\rho} : Q \to P$ is a quasi-isomorphism.
According to Proposition \ref{prop:400} and
Lemma \ref{lem:405}, $\what{\rho}$ is a homotopy
equivalence.

\medskip \noindent
(ii) $\Rightarrow$ (iii):
Let $\phi : Q \to P$ be a homotopy equivalence in $\cat{C}_{\mrm{str}}(A)$ from
an $\a$-adically semi-free complex $Q$. Let
$\psi : \La_{\a}(Q') \to Q$ be an isomorphism in $\cat{C}_{\mrm{str}}(A)$,
where $Q'$ is a semi-free complex.
We obtain this commutative diagram in $\cat{D}(A)$.
\begin{equation} \label{eqn:531}
\begin{tikzcd} [column sep = 3ex, row sep = 6ex]
\opn{Hom}_A(P, M)
\ar[d, "{\eta^{\mrm{R}}_{P, M}}"]
\ar[r, "{\al}"]
&
\opn{Hom}_A(Q, M)
\ar[d, "{\eta^{\mrm{R}}_{Q, M}}"]
\ar[r, "{\beta}"]
&
\opn{Hom}_A(\La_{\a}(Q'), M)
\ar[d, "{\eta^{\mrm{R}}_{\La_{\a}(Q'), M}}"]
\ar[r, "{\ga}"]
&
\opn{Hom}_A(Q', M)
\ar[d, "{\eta^{\mrm{R}}_{Q', M}}"]
\\
\opn{RHom}_A(P, M)
\ar[r, "{\al^{\mrm{R}}}"]
&
\opn{RHom}_A(Q, M)
\ar[r, "{\beta^{\mrm{R}}}"]
&
\opn{RHom}_A(\La_{\a}(Q'), M)
\ar[r, "{\ga^{\mrm{R}}}"]
&
\opn{RHom}_A(Q', M)
\end{tikzcd}
\end{equation}
Here
$\al := \opn{Q}(\opn{Hom}_A(\phi, \opn{id}_M))$,
$\al^{\mrm{R}} := \opn{RHom}_A(\opn{Q}(\phi), \opn{id}_M)$, etc.
The morphisms $\al$ and $\beta$ are isomorphisms because
$\phi$ is a homotopy equivalence and $\psi$ is an isomorphism.
For the same reason $\al^{\mrm{R}}$ and $\beta^{\mrm{R}}$ are isomorphisms.
The morphism $\ga$ is an isomorphism by Proposition \ref{prop:440}(3).

We shall now prove that the morphism
$\ga^{\mrm{R}} =  \opn{RHom}_{A}(\opn{Q}(\tau_{Q'}), \opn{id}_M)$
in $\cat{D}(A)$ is an isomorphism.
Consider this commutative diagram in $\cat{D}(A)$~:

\begin{equation} \label{eqn:530}
\begin{tikzcd} [column sep = 16ex, row sep = 6ex]
\opn{RHom}_A(\La_{\a}(Q'), M)
\ar[dr, "{\ga^{\mrm{R}}}"]
\ar[d, "{\opn{RHom}_A(\eta^{\mrm{L}}_{Q'}, \opn{id}_M)}"']
\\
\opn{RHom}_A(\mrm{L} \La_{\a}(Q'), M)
\ar[r, "{\opn{RHom}_A(\tau^{\mrm{L}}_{Q'}, \opn{id}_M)}"']
&
\opn{RHom}_A(Q', M)
\end{tikzcd}
\end{equation}
According to Theorem \ref{thm:130} the morphism
$\tau^{\mrm{L}}_{M} : M \to \mrm{L} \La_{\a}(M))$
is an isomorphism. This means that in diagram (\ref{eqn:530}) we can replace
$M$ with $\mrm{L} \La_{\a}(M)$.
Because $Q'$ is a semi-free complex, the morphism
$\eta^{\mrm{L}}_{Q'} : \mrm{L} \La_{\a}(Q') \to \La_{\a}(Q')$
is an isomorphism.
Therefore
$\opn{RHom}_A(\eta^{\mrm{L}}_{Q'}, \opn{id}_M)$
is an isomorphism too.
The last isomorphism in \cite[Erratum, Theorem 9]{PSY1} implies that
$\opn{RHom}_A(\tau^{\mrm{L}}_{Q'}, \opn{id})$
is an isomorphism. Conclusion: $\ga^{\mrm{R}}$ is an isomorphism.

Going back to diagram (\ref{eqn:531}), we see that all the horizontal
arrows in it are isomorphisms.
Again using the fact that $Q'$ is semi-free, it follows that
$\eta^{\mrm{R}}_{Q', M}$ is an isomorphism.
We conclude that all arrows in the
diagram are isomorphisms, and in particular this is true for
$\eta^{\mrm{R}}_{P, M}$.

\medskip \noindent
(iii) $\Rightarrow$ (i):
Take an acyclic complex $N$ of $\a$-adically complete modules.
Since $N$ is acyclic, we have $\opn{RHom}_{A}(P, N) = 0$ in $\cat{D}(A)$.
Since $N$ is a complex of $\a$-adically complete modules,
condition (iii) says that $\opn{Hom}_{A}(P, N) = 0$ in $\cat{D}(A)$.
This means that $\opn{Hom}_{A}(P, N)$ is an acyclic complex. Thus $P$ is
$\a$-adically K-projective.
\end{proof}

\begin{dfn} \label{dfn:160}
The full subcategory of $\cat{K}(A)$ on the
$\a$-adically semi-free (resp.\ $\a$-adically K-projective) complexes is denoted
by $\cat{K}(A)_{\tup{$\a$-sfr}}$ (resp.\ $\cat{K}(A)_{\tup{$\a$-kpr}}$).
\end{dfn}

\begin{prop} \label{prop:430}
The categories $\cat{K}(A)_{\tup{$\a$-kpr}}$
and $\cat{K}(A)_{\tup{$\a$-sfr}}$
are full triangulated subcategories of $\cat{K}(A)$,
and the inclusion
$\cat{K}(A)_{\tup{$\a$-sfr}} \to \cat{K}(A)_{\tup{$\a$-kpr}}$
is an equivalence of triangulated categories.
\end{prop}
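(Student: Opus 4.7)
The plan is to verify the three assertions in sequence, using the standard cone construction in the first two parts and invoking Theorem \ref{thm:401} at the decisive step.

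First I would show that $\cat{K}(A)_{\tup{$\a$-kpr}}$ is a triangulated subcategory. Closure under shifts is immediate, since conditions (a) and (b) of Definition \ref{dfn:400} are shift-stable. For closure under cones, given $\phi : P \to Q$ in $\cat{C}_{\mrm{str}}(A)$ with $P, Q \in \cat{K}(A)_{\tup{$\a$-kpr}}$, I form the standard cone $N$; each $N^i = Q^i \oplus P^{i+1}$ is $\a$-adically complete as a finite direct sum of complete modules, so condition (a) holds. For (b), apply $\opn{Hom}_{A}(-, M)$ to the standard short exact sequence $0 \to Q \to N \to P[1] \to 0$, which is split in $\cat{G}_{\mrm{str}}(A)$. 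The long exact cohomology sequence, combined with the acyclicity of $\opn{Hom}_{A}(P[1], M)$ and $\opn{Hom}_{A}(Q, M)$ for any acyclic complex $M$ of $\a$-adically complete modules, forces $\opn{Hom}_{A}(N, M)$ to be acyclic.

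Next I would handle $\cat{K}(A)_{\tup{$\a$-sfr}}$. Closure under shifts follows from the identity $\La_{\a}(P'[n]) \cong \La_{\a}(P')[n]$ together with the shift-stability of semi-freeness. Closure under cones is the subtle point flagged in Remark \ref{rem:155}, and I handle it indirectly: for $\phi : P \to Q$ between $\a$-adically semi-free complexes, Proposition \ref{prop:400} places $P$ and $Q$ in $\cat{K}(A)_{\tup{$\a$-kpr}}$, so by the previous step the standard cone $N$ lies there too. Theorem \ref{thm:401}, implication (i) $\Rightarrow$ (ii), then produces an $\a$-adically semi-free complex $Q''$ with a homotopy equivalence $Q'' \to N$, so $N$ is isomorphic in $\cat{K}(A)$ to an object of $\cat{K}(A)_{\tup{$\a$-sfr}}$. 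Taking the subcategory to be closed under $\cat{K}(A)$-isomorphisms (as is standard for triangulated subcategories), this yields cone closure, and the triangulated structure on $\cat{K}(A)_{\tup{$\a$-sfr}}$ is the one inherited from $\cat{K}(A)$.

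Finally, for the equivalence: the inclusion $\cat{K}(A)_{\tup{$\a$-sfr}} \to \cat{K}(A)_{\tup{$\a$-kpr}}$ is fully faithful since both are full subcategories of $\cat{K}(A)$, and it is essentially surjective by the same implication (i) $\Rightarrow$ (ii) of Theorem \ref{thm:401}. It is triangulated since distinguished triangles on both sides are those inherited from $\cat{K}(A)$. The main obstacle throughout is the cone-closure assertion for $\cat{K}(A)_{\tup{$\a$-sfr}}$: standard cones of morphisms between $\a$-adic completions of semi-free complexes are not themselves manifestly such completions, and Theorem \ref{thm:401} is precisely what is needed to bridge this gap.
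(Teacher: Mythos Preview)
Your proposal is correct and follows essentially the same route as the paper's proof: first establish that $\cat{K}(A)_{\tup{$\a$-kpr}}$ is triangulated via the standard cone and the split short exact sequence, then invoke Proposition \ref{prop:400} and Theorem \ref{thm:401} to identify $\cat{K}(A)_{\tup{$\a$-kpr}}$ as the isomorphism closure of $\cat{K}(A)_{\tup{$\a$-sfr}}$, from which the triangulated structure on the latter and the equivalence follow. The only cosmetic difference is that the paper phrases the cone argument for $\cat{K}(A)_{\tup{$\a$-kpr}}$ as ``two terms acyclic implies the third'' directly from the short exact sequence of complexes, whereas you invoke the long exact cohomology sequence; these are the same.
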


\begin{proof}
It is clear that the full subcategory of $\cat{C}_{\mrm{str}}(A)$ on the
$\a$-adically K-projective complexes is closed under translations and finite
direct sums. Suppose $\phi : P \to Q$ is a homomorphism in
$\cat{C}_{\mrm{str}}(A)$ between two $\a$-adically K-projective complexes, and
let $R$ be the standard cone of $\phi$. So $R$ is a complex of $\a$-adically
complete modules. We have the standard short exact sequence
$0 \to Q \to R \to P[1] \to 0$
in $\cat{C}_{\mrm{str}}(A)$, which is split in
$\cat{G}_{\mrm{str}}(A)$. Given an acyclic complex of $\a$-adically complete
modules $N$, the sequence
\[ 0 \to \opn{Hom}_{A}(P[1], N) \to \opn{Hom}_{A}(R, N) \to
\opn{Hom}_{A}(Q, N) \to 0 \]
in $\cat{C}_{\mrm{str}}(A)$ is exact. Since the complexes
$\opn{Hom}_{A}(P[1], N)$ and $\opn{Hom}_{A}(Q, N)$ are acyclic, so is
$\opn{Hom}_{A}(R, N)$. We see that $R$ is also $\a$-adically K-projective.
Conclusion: the category $\cat{K}(A)_{\tup{$\a$-kpr}}$ is a full triangulated
subcategory of $\cat{K}(A)$.

According to Proposition \ref{prop:400} and Theorem \ref{thm:401},
$\cat{K}(A)_{\tup{$\a$-kpr}}$
is the closure of $\cat{K}(A)_{\tup{$\a$-sfr}}$ under isomorphisms
inside $\cat{K}(A)$. This implies that
$\cat{K}(A)_{\tup{$\a$-sfr}}$ is also triangulated, and the inclusion
$\cat{K}(A)_{\tup{$\a$-sfr}} \to \cat{K}(A)_{\tup{$\a$-kpr}}$
is an equivalence.
\end{proof}

See Remark \ref{rem:155} regarding the problem of cones with $\a$-adically
semi-free complexes, and the importance of this proposition.

The next theorem is a repetition of Theorem \ref{thm:233} from the
Introduction. It is a generalization of \cite[Theorem 1.19]{PSY2} in
two ways: first, we replace the noetherian condition on $A$ by the WPR
condition on $\a$; and second, we allow for cohomologically unbounded
complexes.

\begin{thm} \label{thm:135}
Let $A$ be a ring, and let $\a$ be a weakly proregular ideal in $A$.
Then the localization functor
$\opn{Q} : \cat{K}(A) \to \cat{D}(A)$
restricts to an equivalence of triangulated categories
\[ \tag{$*$}  \opn{Q} : \cat{K}(A)_{\tup{$\a$-sfr}} \to
\cat{D}(A)_{\tup{$\a$-com}} . \]
\end{thm}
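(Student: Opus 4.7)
The plan is to verify the three conditions for the restricted functor to be an equivalence of triangulated categories: it lands in the correct target, it is essentially surjective, and it is fully faithful. Triangulatedness is then automatic since $\opn{Q}$ is a triangulated functor, the source $\cat{K}(A)_{\tup{$\a$-sfr}}$ is a full triangulated subcategory of $\cat{K}(A)$ by Proposition \ref{prop:430}, and the target $\cat{D}(A)_{\tup{$\a$-com}}$ is a full triangulated subcategory of $\cat{D}(A)$.

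First I would verify that the image lies in $\cat{D}(A)_{\tup{$\a$-com}}$. This is exactly the implication (ii) $\Rightarrow$ (i) of Theorem \ref{thm:155}: if $P$ is $\a$-adically semi-free, then $\opn{Q}(P) \in \cat{D}(A)_{\tup{$\a$-com}}$. Next, essential surjectivity is the converse: given any $M \in \cat{D}(A)_{\tup{$\a$-com}}$, Theorem \ref{thm:155} produces an $\a$-adically semi-free complex $P$ together with an isomorphism $M \cong \opn{Q}(P)$ in $\cat{D}(A)$.

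The core of the argument is full faithfulness. Fix complexes $P, Q \in \cat{K}(A)_{\tup{$\a$-sfr}}$; we need the localization map
\[ \opn{Q} : \opn{Hom}_{\cat{K}(A)}(P, Q) \to \opn{Hom}_{\cat{D}(A)}(P, Q) \]
to be bijective. By Proposition \ref{prop:400}, the source $P$ is $\a$-adically K-projective, and the target $Q$ is in particular a complex of $\a$-adically complete $A$-modules. Theorem \ref{thm:401}(iii) therefore applies and yields an isomorphism
\[ \eta^{\mrm{R}}_{P, Q} : \opn{Hom}_{A}(P, Q) \iso \opn{RHom}_{A}(P, Q) \]
in $\cat{D}(A)$. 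Passing to $\opn{H}^0$ and using the standard identifications
$\opn{H}^0(\opn{Hom}_{A}(P, Q)) = \opn{Hom}_{\cat{K}(A)}(P, Q)$ and
$\opn{H}^0(\opn{RHom}_{A}(P, Q)) = \opn{Hom}_{\cat{D}(A)}(P, Q)$
gives the desired bijection; one only has to observe that $\opn{H}^0(\eta^{\mrm{R}}_{P, Q})$ coincides with the localization map, which is a direct consequence of the universal property defining $\eta^{\mrm{R}}$.

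The hard part is not in this theorem itself but lies upstream: essentially every step above is a formal consequence of Theorems \ref{thm:155}, \ref{thm:401} and Proposition \ref{prop:400}. The only subtlety worth naming explicitly is the identification of $\opn{H}^0(\eta^{\mrm{R}}_{P, Q})$ with $\opn{Q}$ on Hom-sets, which could look like a potential gap but is just the definition of the universal morphism $\eta^{\mrm{R}}$ for the right derived functor $\opn{RHom}_A$ unpacked at the level of $\opn{H}^0$.
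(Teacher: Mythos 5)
Your proof is correct and follows essentially the same route as the paper's: essential surjectivity comes from Theorem \ref{thm:155}, and full faithfulness from Theorem \ref{thm:401}(iii) applied to a pair of $\a$-adically semi-free complexes (the source being $\a$-adically K-projective by Proposition \ref{prop:400}), followed by taking $\opn{H}^0$. The only additions you make beyond the paper's proof are to spell out explicitly that the triangulated structure is supplied by Proposition \ref{prop:430} and that $\opn{H}^0(\eta^{\mrm{R}}_{P,Q})$ is the localization map on Hom-sets, both of which the paper treats as understood.
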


\begin{proof}
By Theorem \ref{thm:155}, the functor $\opn{Q}$ in formula ($*$) is essentially
surjective on objects.
Let us prove that it is fully faithful.
Take any two complexes $P, Q \in \cat{K}(A)_{\tup{$\a$-sfr}}$.
Since $P$ is an $\a$-adically K-projective complex and $Q$ is a complex of
$\a$-adically complete modules, according to Theorem \ref{thm:401} the morphism
\[ \eta^{\mrm{R}}_{Q, P} : \opn{Hom}_{A}(P, Q) \to
\opn{RHom}_{A}(P, Q) \]
in $\cat{D}(A)$ is an isomorphism.
Taking $\opn{H}^0$, we obtain an isomorphism
\[ \opn{H}^0(\eta^{\mrm{R}}_{Q, P}) :
\opn{H}^0(\opn{Hom}_{A}(P, Q)) \to
\opn{H}^0(\opn{RHom}_{A}(P, Q)) . \]
Interpreting these as the morphism sets in
$\cat{K}(A)$ and $\cat{D}(A)$ respectively, the isomorphism
$\opn{H}^0(\eta^{\mrm{R}}_{Q, P})$ becomes $(*)$.
\end{proof}

\begin{rem} \label{rem:155}
As we have already seen in Section \ref{sec:recognize},
$\a$-adically semi-free complexes are very good for calculations.
But they pose a thorny descent problem that we now describe.

Suppose $P$ and $Q$ are $\a$-adically free modules, and $\phi : P \to Q$
is a homomorphism in $\cat{M}(A)$. We do not know whether $\phi$
descends to free complexes, namely whether there exists
a homomorphism $\phi' : P' \to Q'$ between free $A$-modules,
and a commutative diagram
\[ \begin{tikzcd} [column sep = 10ex, row sep = 4ex]
\La_{\a}(P')
\ar[d, "{\simeq}"']
\ar[r, "{\La_{\a}(\phi')}"]
&
\La_{\a}(Q')
\ar[d, "{\simeq}"]
\\
P
\ar[r, "{\phi}"]
&
Q
\end{tikzcd} \]
in $\cat{M}(A)$ with vertical isomorphisms.

For this reason, we do not know if every bounded above complex of
$\a$-adically free modules $P$ is an $\a$-adically semi-free complex.
We also do not know if the full subcategory of $\cat{C}_{\mrm{str}}(A)$ on
the $\a$-adically semi-free complexes is closed under taking standard cones.

For Theorem \ref{thm:135} to make sense, we need we need the category
$\cat{K}(A)_{\tup{$\a$-sfr}}$ to be triangulated.
We prove this in Proposition \ref{prop:430}, using $\a$-adically K-projective
complexes.
\end{rem}

\section{A Derived Complete Nakayama Theorem}
\label{sec:nakayama}

The next theorem is a repetition of Theorem \ref{thm:207} from the
Introduction. It is a generalization of \cite[Theorem 2.2]{PSY2} from the
noetherian case to the WPR case.

\begin{thm} \label{thm:104}
Let $A$ be a ring, let $\a$ be a weakly proregular ideal in $A$, let
$\what{A}$ be the $\a$-adic completion of $A$, and let
and $A_0 := A / \a$.
Let $M$ be a derived $\a$-adically complete complex of $A$-modules.
Suppose that there are numbers $i_0 \in \Z$ and $r \in \N$ such that
$\opn{sup}(\opn{H}(M)) = i_0$,
and $\opn{H}^{i_0}(A_0 \ot^{\mrm{L}}_{A} M)$
is generated by $\leq r$ elements as an $A_0$-module.
Then $\opn{H}^{i_0}(M)$ is generated by $\leq r$ elements as an
$\what{A}$-module.
\end{thm}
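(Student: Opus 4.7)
The plan is to apply Theorem \ref{thm:155} to replace $M$ by an explicit $\a$-adically semi-free model $P = \La_{\a}(P')$ with $\opn{sup}(P) = \opn{sup}(P') = i_0$ (so $P^i = 0$ for $i > i_0$), set $H := \opn{H}^{i_0}(M) \cong P^{i_0} / \d_P(P^{i_0 - 1})$, and then reduce the statement to an application of the module-level complete Nakayama theorem \ref{thm:275}. Each $P^i$ is $\a$-adically free and hence $\a$-adically complete, and by Proposition \ref{prop:440}(5) the $A$-linear map $\d_P$ is automatically $\wh{A}$-linear, so the quotient $H$ inherits a canonical $\wh{A}$-module structure from $P^{i_0}$.

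The first substantive step is to identify the $A_0$-module $\opn{H}^{i_0}(A_0 \ot^{\mrm{L}}_{A} M)$ with $H / \a \cd H$. Since $P'$ is a semi-free resolution of $M$ and semi-free complexes are K-flat, one has $A_0 \ot^{\mrm{L}}_{A} M \cong A_0 \ot_{A} P'$. Proposition \ref{prop:440}(1) supplies natural isomorphisms $A_0 \ot_{A} P'^{\msp{2} i} \iso A_0 \ot_{A} P^i$ which commute with the differentials (because $\d_P = \La_{\a}(\d_{P'})$), yielding an isomorphism of complexes $A_0 \ot_{A} P' \cong A_0 \ot_{A} P$. Taking top cohomology and using that $A_0 \ot_{A} (-)$ is right exact on the presentation $P^{i_0 - 1} \xar{\d_P} P^{i_0} \to H \to 0$ gives $\opn{H}^{i_0}(A_0 \ot^{\mrm{L}}_{A} M) \cong H / \a \cd H$. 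Thus the hypothesis provides elements $h_1, \ldots, h_r \in P^{i_0}$ whose images generate $H / \a \cd H$ over $A_0$.

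The Nakayama step is then to form the $\wh{A}$-linear homomorphism
\[
\Psi : \wh{A}^{\msp{2} r} \oplus P^{i_0 - 1} \to P^{i_0}, \qquad
\bigl( (a_i), \msp{2} x \bigr) \mapsto \sum_{i} a_i \cd h_i + \d_P(x) ,
\]
whose source and target are both $\a$-adically complete. By the choice of the $h_i$, the reduction $A_0 \ot_{A} \Psi$ is surjective onto $A_0 \ot_{A} P^{i_0}$: its image contains the image of $A_0 \ot_{A} \d_P$ and surjects onto the quotient $H / \a \cd H$ via the $\bar{h}_i$. Theorem \ref{thm:275} (complete Nakayama) then forces $\Psi$ itself to be surjective. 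Projecting onto $H = P^{i_0} / \d_P(P^{i_0 - 1})$ annihilates the $P^{i_0 - 1}$ summand, so the classes $\bar{h}_1, \ldots, \bar{h}_r$ generate $H$ as an $\wh{A}$-module, as required.

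The main point to watch is the identification $\opn{H}^{i_0}(A_0 \ot^{\mrm{L}}_{A} M) \cong H / \a \cd H$: this needs Proposition \ref{prop:440}(1) to compare $A_0 \ot_{A} P'$ with $A_0 \ot_{A} P$, and it is crucial that $P$ (and not just $P'$) is used, because only the completed complex has $P^{i_0}$ being $\a$-adically complete, which is what feeds into Theorem \ref{thm:275}. Once this comparison is in place the complete Nakayama argument is immediate.
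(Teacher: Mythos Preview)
Your proof is correct and follows the same line as the paper's own argument: replace $M$ by an $\a$-adically semi-free complex $P$ via Theorem~\ref{thm:155}, identify $\opn{H}^{i_0}(A_0 \ot^{\mrm{L}}_A M)$ with $A_0 \ot_A \opn{H}^{i_0}(P)$, lift generators, and apply the complete Nakayama Theorem~\ref{thm:275} to the map $\wh{A}^{\,r} \oplus P^{i_0 - 1} \to P^{i_0}$. The only notable difference is in the identification step: the paper invokes the ``K\"unneth tricks'' from \cite{Ye7} to relate $\opn{H}^{i_0}(A_0 \ot^{\mrm{L}}_A P)$, $\opn{H}^{i_0}(A_0 \ot_A P)$, and $A_0 \ot_A \opn{H}^{i_0}(P)$, whereas you route the computation through the underlying semi-free complex $P'$ (using that $A_0 \ot^{\mrm{L}}_A M \cong A_0 \ot_A P'$ and then Proposition~\ref{prop:440}(1) to pass to $A_0 \ot_A P$), which is slightly more self-contained.
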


\begin{proof}
The proof is almost identical to the proof of \cite[Theorem 2.2]{PSY2},
but we need Theorem \ref{thm:155} to pass from the
noetherian setting to the WPR setting.

We can assume that $i_0 = 0$. By Theorem \ref{thm:155} we can replace $M$
with a nonpositive complex of $\a$-adically free $A$-modules $P$.
We are given that
$L_0 := \opn{H}^0(A_0 \ot^{\mrm{L}}_{A} P)$
is generated as an $A_0$-module by $\leq r$ elements, and we must prove that
$L := \opn{H}^0(P)$ is generated as an $\what{A}$-module by $\leq r$ elements.

By definition there are exact sequences of $\what{A}$-modules
\begin{equation} \label{eqn:447}
P^{-1} \xar{\d} P^0 \to L \to 0
\end{equation}
and
\begin{equation} \label{eqn:448}
A_0 \ot^{}_{A} P^{-1} \xar{\d_0} A_0 \ot^{}_{A} P^0 \to
\opn{H}^0(A_0 \ot^{}_{A} P) \to 0
\end{equation}
Here $\d$ and $\d_0$ are the differentials of the complexes
$P$ and $A_0 \ot^{}_{A} P$.
According to the K\"unneth tricks, see \cite[Theorems 1 and 7]{Ye7}, there are
canonical $A$-module isomorphisms
\begin{equation} \label{eqn:449}
\opn{H}^0(A_0 \ot^{}_{A} P) \cong A_0 \ot^{}_{A} \opn{H}^0(P)
\cong \opn{H}^0(A_0 \ot^{\mrm{L}}_{A} P) .
\end{equation}
Therefore we can transform (\ref{eqn:448}) into the exact sequence
\begin{equation} \label{eqn:450}
A_0 \ot^{}_{A} P^{-1} \xar{\d_0} A_0 \ot^{}_{A} P^0 \to L_0 \to 0 .
\end{equation}

Let $\bar{p}_1, \ldots, \bar{p}_r$ be elements of
$A_0 \ot_{A} P^0$ whose cohomology classes
$[\bar{p}_1], \ldots, [\bar{p}_r]$ generated $L_0$ as an $A_0$-module.
Let
$\phi_0 : A_0^{\oplus r} \to A_0 \ot_{A} P^0$
be the homomorphism corresponding to the sequence
$(\bar{p}_1, \ldots, \bar{p}_r)$.
Then the homomorphism
\begin{equation} \label{eqn:171}
\d_0 \oplus \phi_0 :
(A_0 \ot_{A} P^{-1}) \oplus A_0^{\oplus r} \to A_0 \ot_{A} P^{0}
\end{equation}
is surjective.

Choose elements $p_1, \ldots, p_r$ in $P^0$ lifting the elements
$\bar{p}_1, \ldots, \bar{p}_r$,
and let $\phi : \what{A}^{\oplus r} \to P^0$ be
the corresponding $\what{A}$-module homomorphism.
Define
$\psi := \d \oplus \phi : P^{-1} \oplus \what{A}^{\oplus r} \to P^0$.
Consider this commutative diagram of $A$-modules
\[ \begin{tikzcd} [column sep = 12ex, row sep = 6ex]
P^{-1} \oplus \what{A}^{\oplus r}
\ar[d, two heads]
\ar[r, "{\psi}"]
\ar[dr, "{\psi_0}", two heads]
&
P^0
\ar[d, two heads]
\\
(A_0 \ot_{A} P^{-1}) \oplus A_0^{\oplus r}
\ar[r,  two heads, "{\d_0 \oplus \phi_0}"]
&
A_0 \ot_{A} P^{0}
\end{tikzcd} \]
in which the vertical arrows are the canonical surjections. The homomorphism
$\d_0 \oplus \phi_0$ is known to be surjective, by (\ref{eqn:171}).
Hence $\psi_0$ is surjective. The $A$-modules
$P^{-1} \oplus \what{A}^{\oplus r}$ and $P^0$ are
$\a$-adically complete.
By the Complete Nakayama Theorem \ref{thm:275} the
homomorphism $\psi$ is surjective.
Comparing $\psi$ to (\ref{eqn:447}), we conclude that the cohomology
classes $[p_1], \ldots, [p_r]$ generate $L$ as an $\what{A}$-module.
\end{proof}

Here is Corollary \ref{cor:400} from the introduction.

\begin{cor} \label{cor:412}
In the setting of Theorem \ref{thm:104},
let $M$ and $N$ be derived $\a$-adically complete
complexes of $A$-modules, with
$\opn{sup}(\opn{H}(M)), \opn{sup}(\opn{H}(N)) \leq i_0$ for some
$i_0 \in \Z$.
Let $\phi : M \to  N$ be a morphism
in $\cat{D}(A)$.
The following two conditions are equivalent:
\begin{itemize}
\rmitem{i} The homomorphism
$\opn{H}^{i_0}(\phi) : \opn{H}^{i_0}(M) \to \opn{H}^{i_0}(N)$ is surjective.

\rmitem{ii} The homomorphism
\[ \opn{H}^{i_0}(\opn{id}_{A_0} \ot^{\mrm{L}}_{A} \msp{3} \phi) :
\opn{H}^{i_0}(A_0 \ot^{\mrm{L}}_{A} M) \to
\opn{H}^{i_0}(A_0 \ot^{\mrm{L}}_{A} N) \]
is surjective.
\end{itemize}
\end{cor}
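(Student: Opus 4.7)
The plan is to reduce the statement to the derived complete Nakayama Theorem~\ref{thm:104}, applied with $r = 0$ to the mapping cone of $\phi$. First I will embed $\phi$ in a distinguished triangle
\[ M \xar{\phi} N \to C \xar{\triangle} \]
in $\cat{D}(A)$. Since $\cat{D}(A)_{\tup{$\a$-com}}$ is a full triangulated subcategory of $\cat{D}(A)$, the cone $C$ is again derived $\a$-adically complete. The long exact cohomology sequence of this triangle, together with $\opn{H}^{i_0 + 1}(M) = 0$, shows that $\opn{sup}(\opn{H}(C)) \leq i_0$ and that condition~(i) is equivalent to the vanishing $\opn{H}^{i_0}(C) = 0$.

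Next I will apply the triangulated functor $A_0 \ot^{\mrm{L}}_A (-)$, yielding a distinguished triangle
\[ A_0 \ot^{\mrm{L}}_A M \to A_0 \ot^{\mrm{L}}_A N \to A_0 \ot^{\mrm{L}}_A C \xar{\triangle} \]
in $\cat{D}(A)$. Replacing each of $M$, $N$, $C$ by a K-flat (for instance, a semi-free) resolution whose underlying graded module is concentrated in degrees $\leq i_0$ shows that all three derived tensor products have cohomology concentrated in degrees $\leq i_0$. The same long exact sequence argument then reduces condition~(ii) to the vanishing $\opn{H}^{i_0}(A_0 \ot^{\mrm{L}}_A C) = 0$.

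The final step uses the top-degree K\"unneth identification $\opn{H}^{i_0}(A_0 \ot^{\mrm{L}}_A C) \cong A_0 \ot_A \opn{H}^{i_0}(C)$, which is the same isomorphism appearing in (\ref{eqn:449}). From this the implication (i)~$\Rightarrow$~(ii) is immediate. For (ii)~$\Rightarrow$~(i), if $\opn{sup}(\opn{H}(C)) < i_0$ then $\opn{H}^{i_0}(C)$ vanishes automatically; otherwise $\opn{sup}(\opn{H}(C)) = i_0$, and Theorem~\ref{thm:104} applied to $C$ with $r = 0$ forces $\opn{H}^{i_0}(C)$ to be generated by zero elements over $\what{A}$, and hence to vanish.

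The argument is essentially formal once Theorem~\ref{thm:104} is in hand; the only point that demands care is the bookkeeping around applying that theorem, since its hypothesis requires $\opn{sup}(\opn{H}(C)) = i_0$ exactly, so the degenerate case $\opn{sup}(\opn{H}(C)) < i_0$ has to be split off and disposed of separately. I expect this to be the main (minor) obstacle.
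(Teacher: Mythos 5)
Your proposal matches the paper's own proof essentially line for line: both pass to the cone of $\phi$, reduce conditions (i) and (ii) to the vanishing statements $\opn{H}^{i_0}(C)=0$ and $\opn{H}^{i_0}(A_0 \ot^{\mrm{L}}_A C)=0$ via the long exact sequence, invoke the top-degree K\"unneth identification for (i) $\Rightarrow$ (ii), split off the degenerate case $\opn{sup}(\opn{H}(C)) < i_0$, and apply Theorem~\ref{thm:104} with $r=0$ for the converse. The only cosmetic difference is that you spell out the use of K-flat resolutions concentrated in degrees $\leq i_0$ to bound $\opn{sup}(\opn{H}(A_0 \ot^{\mrm{L}}_A (-)))$, a point the paper leaves implicit.
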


\begin{proof}
Let $L$ be the standard cone of $\phi$. By \cite[Proposition 7.3.5]{Ye5} there
is a distinguished triangle
$M \xar{\phi} N \xar{} L \xar{\msp{3} \triangle \msp{3}}$
in $\cat{D}(A)$. Therefore $L \in \cat{D}(A)_{\tup{$\a$-com}}$,
and also $\opn{sup}(\opn{H}(L)) \leq i_0$.
There is an induced distinguished triangle
\[ A_0 \ot^{\mrm{L}}_{A} M \xar{\phi_0} A_0 \ot^{\mrm{L}}_{A} N \xar{}
A_0 \ot^{\mrm{L}}_{A} L \xar{\msp{3} \triangle \msp{3}} \]
in $\cat{D}(A_0)$, where
$\phi_0 := \opn{id}_{A_0} \ot^{\mrm{L}}_{A}  \msp{3} \phi$.

Condition (i) is equivalent to the condition $\opn{H}^{i_0}(L) = 0$, which
we call (i$'$).
Condition (ii) is equivalent to the condition
$\opn{H}^{i_0}(A_0 \ot^{\mrm{L}}_{A} L) = 0$, which
we call (ii$'$).
If $\opn{sup}(\opn{H}(L)) < i_0$ then condition (i$'$) holds trivially;
and if $\opn{sup}(\opn{H}(L)) = i_0$, the implication
(ii$'$) $\Rightarrow$ (i$'$) is a special case ($r = 0$) of
Theorem \ref{thm:104} above.
Since $\opn{sup}(\opn{H}(L)) \leq i_0$,
the derived K\"unneth trick \cite[Theorem 7]{Ye7} tells us that
$\opn{H}^{i_0}(A_0 \ot^{\mrm{L}}_{A} L) \cong
A_0 \ot_{A} \opn{H}^{i_0}(L)$,
and thus the implication (i$'$) $\Rightarrow$ (ii$'$) holds.
\end{proof}

\section{Completion Preserves Weak Proregularity}
\label{sec:WPR-and-comp}

In this section we prove Theorem \ref{thm:230} from the
Introduction, repeated here as Theorem \ref{thm:240}.
Convention \ref{conv:280} is in force.
Thus $\a \sub A$ is a finitely generated ideal, $\what{A} = \La_{\a}(A)$,
and $\what{\a} = \what{A} \cd \a \sub \what{A}$.

We already know that if $A$ is noetherian, then the ideal $\a$ is WPR.
Recall that one of the most fundamental facts about completion of noetherian
rings is this:
if $A$ is noetherian, then $\what{A}$ is noetherian too; implying that the
ideal $\what{\a}$ is WPR.
Another fundamental fact in the noetherian setting is that $\what{A}$ is flat
over $A$; this, combined with the next easy lemma, could also be used to
deduce the WPR of $\what{\a}$. Yet in the WPR setting (without the noetherian
property), completion can fail to be flat; see counterexample in
\cite[Theorem 7.2]{Ye4}.

These observations are an indication of the importance of Theorem \ref{thm:240}.

\begin{lem} \label{lem:510}
Let $A \to B$ be a flat ring homomorphism, let $\a \sub A$ be a WPR ideal, and
let $\b := B \cd \a \sub B$. Then the ideal $\b$ is WPR.
\end{lem}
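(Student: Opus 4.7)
The plan is to use the Koszul-cohomology characterization of weak proregularity (Definition \ref{dfn:255}) together with the obvious compatibility of Koszul complexes with base change.

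Concretely, I would first pick a WPR generating sequence $\ba = (a_1, \ldots, a_p)$ of $\a$, which exists by Definition \ref{dfn:256}. Let $\bb = (b_1, \ldots, b_p)$ be the image sequence in $B$, so $\bb$ generates $\b$. From formulas (\ref{eqn:333})--(\ref{eqn:334}) the Koszul complex is built by iterated tensor products of two-term free complexes, hence commutes with arbitrary base change: there is a canonical isomorphism
\[ \opn{K}(B; \bb^{j}) \cong B \ot_A \opn{K}(A; \ba^{j}) \]
in $\cat{C}_{\mrm{str}}(B)$, compatible with the transition maps of (\ref{eqn:275}) as $j$ varies.

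Next I would invoke flatness of $B$ over $A$ to commute cohomology with $B \ot_A (-)$, yielding, for every $i \in \Z$ and every $j \geq 1$, a canonical $B$-module isomorphism
\[ \opn{H}^{i}\bigl( \opn{K}(B; \bb^{j}) \bigr) \cong
B \ot_A \opn{H}^{i}\bigl( \opn{K}(A; \ba^{j}) \bigr) , \]
again compatible with transition maps as $j$ varies. So, as inverse systems indexed by $j$,
\[ \bigl\{ \opn{H}^{i}\bigl( \opn{K}(B; \bb^{j}) \bigr) \bigr\}_{j \geq 1}
\cong B \ot_A \bigl\{ \opn{H}^{i}\bigl( \opn{K}(A; \ba^{j}) \bigr) \bigr\}_{j \geq 1} . \]

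Finally I would observe that the pro-zero condition is preserved by any additive functor, in particular by $B \ot_A (-)$: if $j_1 \geq j_0$ is such that the transition homomorphism $\opn{H}^{i}(\opn{K}(A; \ba^{j_1})) \to \opn{H}^{i}(\opn{K}(A; \ba^{j_0}))$ vanishes, then applying $B \ot_A (-)$ gives the zero map between the corresponding $B$-modules. Since $\ba$ is WPR, this holds for every $i \leq -1$ and every $j_0 \geq 1$, so the inverse system of Koszul cohomologies for $\bb$ is pro-zero in each negative degree. Hence $\bb$ is a WPR sequence, and by Definition \ref{dfn:256} the ideal $\b$ is WPR.

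There is really no obstacle: the only subtle point is not to confuse the direction of the hypothesis (flatness of $B$, needed to pull cohomology through the tensor product); the pro-zero preservation is purely formal and requires no finiteness.
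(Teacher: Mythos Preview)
Your proposal is correct and follows essentially the same approach as the paper's proof: pick a WPR generating sequence, use the canonical isomorphism $\opn{K}(B;\bb^{j}) \cong B \ot_A \opn{K}(A;\ba^{j})$, invoke flatness of $B$ over $A$ to commute cohomology with $B \ot_A (-)$, and conclude that the resulting inverse systems are pro-zero in negative degrees. The paper's version is terser---it does not spell out the compatibility with transition maps or the formal preservation of pro-zero under additive functors---but the argument is the same.
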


\begin{proof}
Let $\bsym{a} = (a_1, \ldots, a_n)$ by a WPR generating sequence of $\a$, and
let $\bsym{b} = (b_1, \ldots, b_n)$ be the image of $\bsym{a}$ in $B$.
For every $j$ there is a canonical isomorphism of complexes
$\opn{K}(B; \bb^{j}) \cong B \ot_A \opn{K}(A; \ba^{j})$.
The flatness of $B$ gives a canonical isomorphism of modules
\[ B \ot_A \opn{H}^i \bigl( \opn{K}(A; \ba^{j}) \bigr) \cong
\opn{H}^i \bigl( B \ot_A \opn{K}(A; \ba^{j}) \bigr) \cong
\opn{H}^i \bigl( \opn{K}(B; \bb^{j}) \bigr) \]
for every $i$. So the inverse system
$\bigl\{ \opn{H}^i \bigl( \opn{K}(B; \bb^{j}) \bigr) \bigr\}_{j \in \N}$
is pro-zero for $i < 0$.
\end{proof}

Suppose $\b$ is some finitely generated ideal of $A$, such that
$\sqrt{\a} = \sqrt{\b}$. Then the sequences of ideals
$\{ \a^j \}_{j \geq 1}$ and $\{ \b^j \}_{j \geq 1}$ are cofinal.
This implies that the completion functors $\La_{\a}$ and $\La_{\b}$ are
isomorphic, and also the torsion functors $\Ga_{\a}$ and $\Ga_{\b}$ are
isomorphic. In particular, the ring $A / \b$ is an $\a$-torsion $A$-module.

\begin{lem} \label{lem:165}
Let $\b$ be a finitely generated ideal of $A$, such that
$\sqrt{\a} = \sqrt{\b}$.
Let $\what{\b} := \what{A} \cd \b$, the ideal in $\what{A}$ generated by $\b$.
Then the $A$-ring homomorphism
$A / \b \to \what{A} \msp{2} / \msp{2} \what{\b}$ is an isomorphism.
\end{lem}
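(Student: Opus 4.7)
The plan is to deduce this directly from Proposition \ref{prop:440}(2) applied with the module $M := A$ and the torsion module $N := A / \b$. The cornerstone observation is that $A / \b$ is an $\a$-torsion $A$-module: since $\sqrt{\a} = \sqrt{\b}$, there is an integer $m \geq 1$ with $\a^m \sub \b$, so $\a^m \cd (A / \b) = 0$, and in particular every element of $A / \b$ is annihilated by a power of $\a$.

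First, I would unpack the canonical homomorphism in the statement. The ring map $\tau_A : A \to \what{A}$ sends $\b$ into $\what{\b} = \what{A} \cd \b$, so it descends to an $A$-ring homomorphism $A / \b \to \what{A} \msp{2} / \msp{2} \what{\b}$; this is the map we must show is bijective. Next, I would invoke Proposition \ref{prop:440}(2) with $M := A$ (so $\what{M} = \what{A}$) and $N := A / \b$, which is $\a$-torsion by the paragraph above. The proposition then asserts that the homomorphism
\[ (A / \b) \ot_A A \;\to\; (A / \b) \ot_A \what{A} \]
induced by $\tau_A$ is bijective. Under the standard identifications $(A / \b) \ot_A A \cong A / \b$ and $(A / \b) \ot_A \what{A} \cong \what{A} \msp{2} / \msp{2} \b \cd \what{A} = \what{A} \msp{2} / \msp{2} \what{\b}$, this is precisely the map of the lemma, so we conclude.

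There is no real obstacle here: the substantive input is the fact that tensoring with an $\a$-torsion module does not see the difference between $M$ and its $\a$-adic completion, and that has already been packaged as Proposition \ref{prop:440}(2). The only care required is the (routine) verification that the tensor identifications above agree with the canonical quotient map induced by $\tau_A$, which is immediate from functoriality of $- \ot_A -$.
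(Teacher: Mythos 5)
Your proof is correct. Let me compare it with the route the paper actually takes. The paper's own proof observes that, since $\sqrt{\a} = \sqrt{\b}$, the $\a$-adic and $\b$-adic filtrations are cofinal, so $\what{A}$ is also the $\b$-adic completion of $A$; it then cites an external result (\cite[Theorem 2.8]{Ye4}, or equivalently \cite[Corollary 3.6 and Theorem 1.2]{Ye1}) saying that the canonical map from $A/\b$ to the quotient of the $\b$-adic completion by $\what{\b}$ is bijective. Your argument instead exploits the {\em other} consequence of $\sqrt{\a} = \sqrt{\b}$ --- that $A/\b$ is an $\a$-torsion module --- and then applies Proposition \ref{prop:440}(2) with $M := A$ and $N := A/\b$, identifying $(A/\b) \ot_A A \cong A/\b$ and $(A/\b) \ot_A \what{A} \cong \what{A} \msp{2}/\msp{2} \what{\b}$ and checking compatibility with the quotient map induced by $\tau_A$. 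Both roads ultimately rest on the same external fact (Proposition \ref{prop:440}(2) is deduced from Proposition \ref{prop:440}(1), whose proof is itself an appeal to \cite[Theorem 2.8]{Ye4}), but yours has the advantage of staying entirely within the paper's already-established toolkit and avoids the cofinality detour, making the dependence on the radical hypothesis slightly more transparent.
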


\begin{proof}
The ring $\what{A}$ is also the $\b$-adic completion of $A$.
According to \cite[Theorem 2.8]{Ye4} or
\cite[Corollary 3.6 and Theorem 1.2]{Ye1},
the homomorphism $A / \b \to \what{A} \msp{2} / \msp{2} \what{\b}$ is an
isomorphism.
\end{proof}

The next lemma is the key technical result of this section.

\begin{lem} \label{lem:156}
Assume $\a$ is a WPR ideal.
Let $\bb = (b_1, \ldots, b_n)$ be a sequence of elements in $A$,
let $\b$ be the ideal generated by $\bb$, and assume $\sqrt{\a} = \sqrt{\b}$.
For every $i$ let $\what{b}_i$ be the image of the element $b_i$ in $\what{A}$,
and define the sequence
$\what{\bb} := (\what{b}_1, \ldots, \what{b}_n)$ in $\what{A}$.
Then the obvious homomorphism of Koszul complexes
$\opn{K}(A; \bb) \to \opn{K}(\what{A}; \what{\bb})$ is a quasi-isomorphism.
\end{lem}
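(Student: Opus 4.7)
The plan is to reduce the lemma to the assertion that $K := \opn{K}(A;\bb)$ is derived $\a$-adically complete, and then to verify that assertion using Theorem \ref{thm:276}. The key preliminary observation is that $\La_{\a}(K) = \opn{K}(\what{A};\what{\bb}) = \wh{K}$ canonically in $\cat{C}_{\mrm{str}}(A)$: indeed $\La_{\a}(A^r) = \what{A}^r$ for finite $r$, and the Koszul differentials (matrices whose entries are $\pm b_j$) are transported under $\tau_A$ to the matrices with entries $\pm \what{b}_j$. Under this identification the ``obvious'' morphism $\phi$ of the lemma is nothing but the canonical map $\tau_K : K \to \La_{\a}(K)$. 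Invoking the commutative triangle (\ref{eqn:535}), we obtain $\opn{Q}(\phi) = \opn{Q}(\tau_K) = \eta^{\mrm{L}}_K \circ \tau^{\mrm{L}}_K$ in $\cat{D}(A)$, so it suffices to prove that both $\eta^{\mrm{L}}_K$ and $\tau^{\mrm{L}}_K$ are isomorphisms.

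The first, $\eta^{\mrm{L}}_K$, is immediate from Theorem \ref{thm:115}: $K$ is a bounded complex of free (and hence $\a$-adically flat) $A$-modules. For the second, I want to show $K$ is derived $\a$-adically complete, which by Theorem \ref{thm:276}(2) reduces to each cohomology $\opn{H}^i(K)$ being an $\a$-contramodule. The standard null-homotopy argument on Koszul complexes shows that each generator $b_j$ acts as $0$ on $\opn{H}^i(K)$, so $\b \cdot \opn{H}^i(K) = 0$. Combined with $\sqrt{\a} = \sqrt{\b}$ and the finite generation of $\a$, some power $\a^k$ lies in $\b$ and therefore annihilates $\opn{H}^i(K)$. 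Hence the $\a$-adic topology on $\opn{H}^i(K)$ is discrete, making $\opn{H}^i(K)$ trivially $\a$-adically complete. By Lemma \ref{lem:146} it is derived $\a$-adically complete, which under WPR coincides (via Theorem \ref{thm:270}) with being an $\a$-contramodule.

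The whole argument rests on the two observations $\phi = \tau_K$ and $\a^k \cdot \opn{H}^i(K) = 0$; everything else is a direct appeal to results already established in the paper (Theorems \ref{thm:115}, \ref{thm:270}, \ref{thm:276}, Lemma \ref{lem:146}, and the triangle (\ref{eqn:535})). I do not anticipate any serious obstacle; the only step requiring care is the routine verification that the degreewise identification $\La_{\a}(K) = \wh{K}$ matches the Koszul differentials, which it does because $\La_{\a}$ is a functor and the differentials are $A$-linear combinations of the $b_j$.
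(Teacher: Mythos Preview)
Your argument is correct, and it takes a genuinely different route from the paper's own proof. Both proofs begin with the same observation that $\wh{K} = \La_{\a}(K)$ and $\phi = \tau_K$, and both use Theorem~\ref{thm:115} to handle $\eta^{\mrm{L}}$. The divergence is in how the remaining isomorphism is obtained. The paper works on the \emph{torsion} side: it shows that both $K$ and $\wh{K}$ lie in $\cat{D}(A)_{\tup{$\a$-tor}}$ (their cohomologies being $A/\b$-modules), and then invokes the MGM Equivalence to reduce to proving that $\mrm{L}\La_{\a}(\opn{Q}(\phi))$ is an isomorphism; this in turn requires applying Theorem~\ref{thm:115} to $\wh{K}$ as well, which forces an appeal to \cite[Theorem~5.3]{Ye4} for the $\a$-adic flatness of $\wh{A}$. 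You instead work on the \emph{completion} side: the same fact that $\a^{k}$ annihilates $\opn{H}^{i}(K)$ makes each $\opn{H}^{i}(K)$ $\a$-adically complete, hence (via Lemma~\ref{lem:146} and Theorem~\ref{thm:270}) an $\a$-contramodule, so Theorem~\ref{thm:276}(2) gives $K \in \cat{D}(A)_{\tup{$\a$-com}}$ and $\tau^{\mrm{L}}_{K}$ is an isomorphism directly. Your route is shorter and avoids both the MGM machinery and the adic-flatness of $\wh{A}$; the paper's route is more self-contained in that it does not rely on Positselski's Theorem~\ref{thm:276}(2), and it fits the paper's narrative of exploiting MGM. (A minor streamlining of your argument: the detour through Lemma~\ref{lem:146} and Theorem~\ref{thm:270} can be replaced by the elementary fact that a module annihilated by $\a^{k}$ is already an $\a$-contramodule, no WPR needed; but what you wrote is certainly valid.)
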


\begin{proof}
Write $K := \opn{K}(A; \bb)$ and $\what{K} := \opn{K}(\what{A}; \what{\bb})$,
and let $\phi : K \to \what{K}$ be the obvious homomorphism of complexes of
$A$-modules, which is also a homomorphism of commutative DG $A$-rings.
We want to prove that $\phi$ is a quasi-isomorphism in
$\cat{C}_{\mrm{str}}(A)$,
and this is the same as proving that $\opn{Q}(\phi) : K \to \what{K}$
is an isomorphism in $\cat{D}(A)$.

Since $\opn{H}^0(K) \cong A / \b$ as $A$-rings,
all the cohomologies $\opn{H}^i(K)$ are $(A / \b)$-modules, and
therefore they are $\a$-torsion. According to
\cite[Corollary 4.32]{PSY1}
the complex $K$ belongs to $\cat{D}(A)_{\tup{$\a$-tor}}$.

By Lemma \ref{lem:165} the $A$-ring homomorphism
$A / \b \to \what{A} \msp{2} / \msp{2} \what{\b}$ is bijective.
Therefore $\opn{H}^0(\what{K}) \cong A / \b$ as $A$-rings,
all the cohomologies $\opn{H}^i(\what{K})$ are $(A / \b)$-modules, and so
the complex $\what{K}$ also belongs to
$\cat{D}(A)_{\tup{$\a$-tor}}$.

We need to prove that  the morphism
$\opn{Q}(\phi) : K \to \what{K}$ in $\cat{D}(A)$ is an
isomorphism.
Since both $K$ and $\what{K}$ belong to $\cat{D}(A)_{\tup{$\a$-tor}}$,
the MGM Equivalence \cite[Theorem 1.1]{PSY1} says that
$\opn{Q}(\phi) : K \to \what{K}$ is an
isomorphism in $\cat{D}(A)$ if and only if
\begin{equation} \label{eqn:165}
\mrm{L} \La_{\a}(\opn{Q}(\phi)) :
\mrm{L} \La_{\a}(K) \to \mrm{L} \La_{\a}(\what{K})
\end{equation}
is an isomorphism in $\cat{D}(A)$.
We are going to prove that $\mrm{L} \La_{\a}(\opn{Q}(\phi))$
is an isomorphism.

Observe that as a complex of $A$-modules,
$\what{K} = \La_{\a}(K)$, the $\a$-adic completion of the complex $K$.
The homomorphisms $\phi, \tau_K : K \to \what{K}$
in $\cat{C}_{\mrm{str}}(A)$ are equal,
and $\La_{\a}(\phi) : \La_{\a}(K) \to \La_{\a}(\what{K})$
is an isomorphism.

The complexes $K$ and $\what{K}$ are bounded complexes of $\a$-adically flat
$A$-modules. For $K$ this is clear, and for $\what{K}$ we use
\cite[Theorem 5.3]{Ye4}.
Consider the following commutative diagram in $\cat{D}(A)$.
\[ \begin{tikzcd} [column sep = 12ex, row sep = 6ex]
\mrm{L} \La_{\a}(K)
\ar[r, "{\mrm{L} \La_{\a}(\opn{Q}(\phi))}"]
\ar[d, "{\eta^{\mrm{L}}_{K}}"']
&
\mrm{L} \La_{\a}(\what{K})
\ar[d, "{\eta^{\mrm{L}}_{\what{K}}}"]
\\
\La_{\a}(K)
\ar[r, "{\opn{Q}(\La_{\a}(\phi))}"]
&
\La_{\a}(\what{K})
\end{tikzcd} \]
By Theorem \ref{thm:115} the vertical arrows are isomorphisms.
Since $\La_{\a}(\phi)$ is an isomorphism, so is
$\opn{Q}(\La_{\a}(\phi))$. We conclude that
$\mrm{L} \La_{\a}(\opn{Q}(\phi))$ is an isomorphism.
\end{proof}

\begin{thm} \label{thm:240}
Let $A$ be a ring, let $\a$ be a WPR ideal in $A$,
let $\what{A}$ be the $\a$-adic completion of $A$, and let
$\what{\a} := \what{A} \cd \a$, the ideal in $\what{A}$ generated by $\a$. Then
the ideal $\what{\a} \sub \what{A}$ is weakly proregular.
\end{thm}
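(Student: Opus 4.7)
The plan is to deduce Theorem \ref{thm:240} as an almost immediate consequence of Lemma \ref{lem:156}, applied one power at a time. Start by choosing a WPR generating sequence $\ba = (a_1, \ldots, a_n)$ of $\a$; such a sequence exists by Definition \ref{dfn:256}, and in fact by \cite[Corollary 6.3]{PSY1} any generating sequence will do. Let $\what{\ba} = (\what{a}_1, \ldots, \what{a}_n)$ be the image of $\ba$ under $\tau_A : A \to \what{A}$. Since $\ba$ generates $\a$ and $\what{\a} = \what{A} \cdot \a$, the sequence $\what{\ba}$ generates $\what{\a}$. The goal is then to show that $\what{\ba}$ is a WPR sequence in $\what{A}$.

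For every $j \geq 1$ the sequence $\ba^j = (a_1^j, \ldots, a_n^j)$ generates the ideal $\a^j$, which satisfies $\sqrt{\a^j} = \sqrt{\a}$. Therefore Lemma \ref{lem:156}, applied with $\bb := \ba^j$, yields a quasi-isomorphism
\[
\phi_j : \opn{K}(A; \ba^{j}) \to \opn{K}(\what{A}; \what{\ba}^{j})
\]
in $\cat{C}_{\mrm{str}}(A)$. The construction of $\phi_j$ is functorial: for $j_1 \geq j_0 \geq 1$ the transition homomorphisms (\ref{eqn:275}) for $A$ and for $\what{A}$ fit into a commutative square with $\phi_{j_1}$ and $\phi_{j_0}$ on the horizontal arrows, because each component of these maps is a rescaling by a power of $a_i$ (respectively $\what{a}_i$) that is preserved by the ring map $A \to \what{A}$.

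Taking $\opn{H}^i$ for $i \leq -1$, the quasi-isomorphisms $\phi_j$ assemble into an isomorphism of inverse systems of $A$-modules
\[
\bigl\{ \opn{H}^i \bigl( \opn{K}(A; \ba^{j}) \bigr) \bigr\}_{j \geq 1}
\;\iso\;
\bigl\{ \opn{H}^i \bigl( \opn{K}(\what{A}; \what{\ba}^{j}) \bigr) \bigr\}_{j \geq 1}.
\]
Because $\ba$ is WPR in $A$, the left-hand inverse system is pro-zero for every $i \leq -1$; hence so is the right-hand system. This is exactly the condition that $\what{\ba}$ is a WPR sequence in $\what{A}$, and since $\what{\ba}$ generates $\what{\a}$, we conclude that $\what{\a}$ is a WPR ideal.

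The entire substance of the argument sits in Lemma \ref{lem:156}; the step here is just a naturality check plus observing that $\sqrt{\a^j} = \sqrt{\a}$. The main obstacle, which has already been handled, was establishing the lemma, whose proof rested on the MGM Equivalence together with Theorem \ref{thm:115} to identify $\mrm{L}\La_\a$ on the Koszul complexes of flat modules with ordinary completion.
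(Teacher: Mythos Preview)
Your proof is correct and follows essentially the same route as the paper: apply Lemma \ref{lem:156} to each power $\ba^j$, obtain an isomorphism of inverse systems on Koszul cohomology, and transfer the pro-zero condition. One harmless slip: the sequence $\ba^j = (a_1^j, \ldots, a_n^j)$ does \emph{not} generate the ideal $\a^j$ in general (e.g.\ $a_1 a_2 \in \a^2$ need not lie in $(a_1^2, a_2^2)$); but the ideal it generates still has radical $\sqrt{\a}$, which is all Lemma \ref{lem:156} requires, so the argument is unaffected.
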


As mentioned at the end of the Introduction, Theorem \ref{thm:240} was
already known to Positselski, see \cite[Example 5.2(2)]{Po1}.
We discovered this theorem independently, unaware of the prior work of
Positselski. As far as we can tell, our proof
(based on the MGM Equivalence and Lemma \ref{lem:156}) is totally different
from the proof outlined by Positselski.

\begin{proof}
Choose some WPR sequence
$\ba = (a_1, \ldots, a_n)$ of elements of $A$ that generates the ideal $\a$.
For every $j \geq 1$ define the sequence
$\ba^j := (a_1^j, \ldots, a_n^j)$,
and let $\b_j$ be the ideal of $A$ generated by the sequence $\ba^j$.
This ideal satisfies $\sqrt{\b_j} = \sqrt{\a}$.

For every $i$ let $\what{a}_i$ be the image of the element $a_i$ in
$\what{A}$. Define the sequences
$\what{\ba} := (\what{a}_1, \ldots, \what{a}_n)$
and
$\what{\ba}^j := (\what{a}_1^{\msp{2} j}, \ldots, \what{a}_n^{\msp{2} j})$ in
the ring $\what{A}$.
The sequence $\what{\ba}$ generates the ideal $\what{\a}$,
and the sequence $\what{\ba}^{\msp{2} j}$ generates the ideal
$\what{\b}_j := \what{A} \cd \b_j \sub \what{A}$.
Again,
$\sqrt{\msp{3} \what{\b}_j \msp{2}} =
\sqrt{\msp{2} \what{\a} \msp{2}}$.

According to Lemma \ref{lem:156}, for every $j$ the homomorphism of Koszul
complexes
$\phi_j : \opn{K}(A; \ba^j) \to \opn{K}(\what{A}; \what{\ba}^j)$
is a quasi-isomorphism. Therefore, for every $i$ and $j$, the $A$-module
homomorphism
\[ \opn{H}^i(\phi_j) :
\opn{H}^i \bigl( \opn{K}(A; \ba^j) \bigr) \to
\opn{H}^i \bigl( \opn{K}(\what{A}; \what{\ba}^j) \bigr) \]
is an isomorphism. So for every $i$ there is an isomorphism
\begin{equation} \label{eqn:167}
\bigl\{ \opn{H}^i(\phi_j) \bigr\}_{j \geq 1} :
\bigl\{ \opn{H}^i \bigl( \opn{K}(A; \ba^j) \bigr) \bigr\}_{j \geq 1}
\iso
\bigl\{ \opn{H}^i \bigl( \opn{K}(\what{A}; \what{\ba}^j) \bigr)
\bigr\}_{j \geq 1}
\end{equation}
of inverse systems of $A$-modules.

We are given that $\ba$ is a WPR sequence in $A$.
This means that for every $i < 0$ the first inverse system of $A$-modules
in (\ref{eqn:167}) is pro-zero. But then the second inverse system there is
also pro-zero, and thus $\what{\ba}$ is a WPR sequence in $\what{A}$.
Since the sequence $\what{\ba}$ generates the ideal $\what{\a}$, it follows
that this ideal of $\what{A}$ is WPR.
\end{proof}

The ring homomorphism $A \to \what{A}$ induces the restriction functors
$\opn{Rest} : \cat{M}(\what{A}) \to \cat{M}(A)$,
$\opn{Rest} : \cat{K}(\what{A}) \to \cat{K}(A)$ and
$\opn{Rest} : \cat{D}(\what{A}) \to \cat{D}(A)$.

\begin{prop} \label{prop:453}
The functor
$\opn{Rest} : \cat{K}(\what{A}) \to \cat{K}(A)$
induces an isomorphism of categories
$\opn{Rest} : \cat{K}(\what{A})_{\msp{2} \tup{$\what{\a}$-kpr}} \to
\cat{K}(A)_{\tup{$\a$-kpr}}$.
\end{prop}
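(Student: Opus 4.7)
The plan is to show that the restriction functor is, after unwinding the identifications from Section~\ref{sec:back}, essentially the identity on the relevant underlying data. First, I would recall from the discussion after Convention~\ref{conv:280} that every $\a$-adically complete $A$-module carries a unique $\what{A}$-module structure extending its $A$-module structure, giving a bijection between complete $A$-modules and $\what{\a}$-adically complete $\what{A}$-modules. Proposition~\ref{prop:440}(5) upgrades this to a literal equality $\opn{Hom}_{\what{A}}(M, N) = \opn{Hom}_A(M, N)$ for any pair $M, N$ of such complete modules.

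Next I would pass to complexes. Applying the previous step degree-wise produces a bijection between the classes of complexes of $\a$-adically complete $A$-modules and of $\what{\a}$-adically complete $\what{A}$-modules, while the equality of Hom groups forces the two candidate differentials to coincide. This yields a strict isomorphism of the full DG subcategories of $\cat{C}(\what{A})$ and $\cat{C}(A)$ consisting of complexes of complete modules, realized by the restriction functor, hence also an isomorphism of the corresponding full homotopy subcategories of $\cat{K}(\what{A})$ and $\cat{K}(A)$.

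Finally I would verify that the K-projectivity condition transfers under this identification. By Definition~\ref{dfn:400}, $\what{\a}$-adic K-projectivity of such a $P$ requires that $\opn{Hom}_{\what{A}}(P, N)$ be acyclic for every acyclic complex $N$ of complete $\what{A}$-modules. Acyclicity is a property of the underlying complex of abelian groups, so the class of acyclic complexes of complete modules is the same on both sides; and Proposition~\ref{prop:440}(5) gives $\opn{Hom}_{\what{A}}(P, N) = \opn{Hom}_A(P, N)$ as complexes of abelian groups. Therefore $P$ is $\what{\a}$-adically K-projective if and only if its restriction is $\a$-adically K-projective, and combined with the previously established identification of morphism sets this yields the claimed isomorphism of categories. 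There is no substantial obstacle here: the entire content is bookkeeping enabled by Proposition~\ref{prop:440}(5); Theorem~\ref{thm:240} supplies the conceptual justification for even considering $\cat{K}(\what{A})_{\msp{2} \tup{$\what{\a}$-kpr}}$, but the proposition itself does not logically require weak proregularity of $\what{\a}$.
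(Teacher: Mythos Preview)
Your proof is correct and follows essentially the same approach as the paper: identify complete $A$-modules with complete $\what{A}$-modules, use Proposition~\ref{prop:440}(5) to equate the Hom sets and hence the DG and homotopy categories of complexes of complete modules, and then observe that the K-projectivity conditions match because the relevant Hom complexes coincide. Your closing remark that Theorem~\ref{thm:240} is not logically needed here is a correct additional observation not made explicit in the paper.
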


\begin{proof}
An $\a$-adically complete $A$-module is the same as an
$\what{\a}$-adically complete $\what{A}$-module.
So let us refer to them as complete modules.
According to Proposition \ref{prop:440}(5), for complete modules $M$ and $N$
there is equality
$\opn{Hom}_{\what{A}}(M, N) = \opn{Hom}_{A}(M, N)$.
Therefore the full DG subcategory of $\cat{C}(A)$ on the
complexes of complete modules is equal to the full DG subcategory of
$\cat{C}(\what{A})$ on the complexes of complete modules.
This implies that the full triangulated subcategory of $\cat{K}(A)$ on the
complexes of complete modules is equal to the full triangulated subcategory
of $\cat{K}(\what{A})$ on the complexes of complete modules.

Take complexes of complete modules $P$ and $N$, with $N$ acyclic.
Since
$\opn{Hom}_{\what{A}}(P, N) = \opn{Hom}_{A}(P, N)$,
we see that $P$ is $\a$-adically K-projective over $A$ if and only if
$P$ is $\what{\a}$-adically K-projective over $\what{A}$.
Conclusion:
$\cat{K}(\what{A})_{\msp{2} \tup{$\what{\a}$-kpr}} =
\cat{K}(A)_{\tup{$\a$-kpr}}$.
\end{proof}

\begin{thm} \label{thm:235}
Let $A$ be a ring, let $\a$ we a WPR ideal in $A$,
let $\what{A}$ be the $\a$-adic completion of $A$, and let
$\what{\a} := \what{A} \cd \a$, the ideal in $\what{A}$ generated by $\a$. Then:
\begin{enumerate}
\item A complex of $\what{A}$-modules $M$ is derived $\what{\a}$-adically
complete if and only if $\opn{Rest}(M)$ is a derived $\a$-adically
complete complex of $A$-modules.

\item The restriction functor
\[ \opn{Rest} : \cat{D}(\what{A} \msp{2})_{\msp{2} \tup{$\what{\a}$-com}} \to
\cat{D}(A)_{\tup{$\a$-com}} \]
is an equivalence of triangulated categories.
\end{enumerate}
\end{thm}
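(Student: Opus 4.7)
My plan is to handle part (1) first, via conservativity of restriction together with a naturality identification for the sequential derived completion, and then deduce part (2) from a commutative square in which three of the four sides are already known to be equivalences.

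For part (1) I would begin by observing that the restriction functor $\opn{Rest} : \cat{D}(\what{A}) \to \cat{D}(A)$ is conservative: both source and target compute cohomology at the level of underlying abelian groups, so a morphism is invertible on one side iff it is on the other. Next, picking a weakly proregular generating sequence $\ba$ of $\a$, its image $\what{\ba}$ in $\what{A}$ generates $\what{\a}$ and is WPR by Theorem \ref{thm:240}. The telescope construction is functorial in the base ring, giving a canonical strict isomorphism $\opn{Tel}(\what{A}; \what{\ba}) \cong \what{A} \ot_A \opn{Tel}(A; \ba)$, and the $(\what{A}, A)$-adjunction yields a natural isomorphism
\[
\opn{Hom}_{\what{A}}\bigl( \opn{Tel}(\what{A}; \what{\ba}), M \bigr) \;\cong\;
\opn{Hom}_{A}\bigl( \opn{Tel}(A; \ba), \opn{Rest}(M) \bigr)
\]
compatible with the canonical maps of (\ref{eqn:339}). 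Combining this with the equality $\mrm{L}_{\mrm{seq}}\La = \mrm{L}\La$ over both rings (Theorem \ref{thm:270}, applicable on the $\what{A}$ side thanks to Theorem \ref{thm:240}) one sees that $\tau^{\mrm{L}}_M$ over $\what{A}$ is an isomorphism iff its restriction, which is $\tau^{\mrm{L}}_{\opn{Rest}(M)}$ over $A$ up to canonical isomorphism, is an isomorphism. Conservativity of $\opn{Rest}$ then gives the equivalence of the two completeness conditions.

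For part (2) I would assemble the diagram
\[
\begin{tikzcd}[column sep = 12ex, row sep = 6ex]
\cat{K}(\what{A})_{\tup{$\what{\a}$-kpr}} \ar[r, "\opn{Q}"] \ar[d, "\opn{Rest}"'] & \cat{D}(\what{A})_{\tup{$\what{\a}$-com}} \ar[d, "\opn{Rest}"] \\
\cat{K}(A)_{\tup{$\a$-kpr}} \ar[r, "\opn{Q}"] & \cat{D}(A)_{\tup{$\a$-com}}
\end{tikzcd}
\]
which commutes strictly since both $\opn{Rest}$ and $\opn{Q}$ are the identity on the underlying complexes of abelian groups. The top arrow is an equivalence: by Theorem \ref{thm:240}, Proposition \ref{prop:430} and Theorem \ref{thm:135} can be applied over $\what{A}$, and their composition identifies $\cat{K}(\what{A})_{\tup{$\what{\a}$-kpr}}$ with $\cat{D}(\what{A})_{\tup{$\what{\a}$-com}}$ via $\opn{Q}$. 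The bottom arrow is an equivalence by the same theorems over $A$. The left arrow is an isomorphism of categories by Proposition \ref{prop:453}. The right arrow is a well-defined functor by part (1). A commuting square in which three sides are equivalences forces the fourth to be an equivalence, finishing the proof.

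The main obstacle, to the extent that there is one, is ensuring that the identifications used in part (1) are genuinely natural in $M$ and compatible with the canonical morphisms $\tau^{\msp{2}\ba}_M$ and $\tau^{\msp{2}\what{\ba}}_M$, not merely object-wise isomorphisms; this reduces to the functoriality of the telescope complex under the base change $A \to \what{A}$ together with the $(\what{A}, A)$-adjunction for Hom, both of which are standard. All the nontrivial input—namely, that $\what{\a}$ is WPR so that the whole machinery of Theorems \ref{thm:270}, \ref{thm:135} and Proposition \ref{prop:430} becomes available over $\what{A}$—has already been supplied by Theorem \ref{thm:240}.
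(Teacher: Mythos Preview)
Your proposal is correct and follows essentially the same route as the paper: part (1) via the telescope-complex identification $\opn{Tel}(\what{A};\what{\ba}) \cong \what{A}\ot_A \opn{Tel}(A;\ba)$ and the Hom adjunction to match the two completeness criteria, and part (2) via the same commutative square with Proposition~\ref{prop:453}, Proposition~\ref{prop:430}, and Theorem~\ref{thm:135}. The only cosmetic differences are that the paper cites \cite[Corollary~5.25]{PSY1} directly rather than invoking Theorem~\ref{thm:270}, and it states the square commutes up to canonical isomorphism rather than strictly.
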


\begin{proof}
(1) To simplify the presentation, and because we are going to work with explicit
quasi-isomorphisms in $\cat{C}_{\mrm{str}}(A)$ and
$\cat{C}_{\mrm{str}}(\what{A})$, we shall write $M$ instead of
$\opn{Rest}(M)$.

Let $\ba = (a_1, \ldots, a_p)$ be a finite sequence of elements in $A$ that
generates $\a$. We know that the sequence $\ba$ is WPR.
Let $T := \opn{Tel}(A; \ba)$, the telescope complex associated to $\ba$,
and let
$\tau^{\msp{2} \ba}_{M} : M \to \opn{Hom}_{A}(T, M)$
be the canonical homomorphism in $\cat{C}_{\mrm{str}}(A)$, as recalled in
Section \ref{sec:back-der}.
According to \cite[Corollary 5.25]{PSY1}, $M$ is a derived $\a$-adically
complete complex of $A$-modules if and only if
$\tau^{\msp{2} \ba}_{M}$ is a quasi-isomorphism.

Let $\what{\ba}$ be the image of the sequence $\ba$ in the ring $\what{A}$.
This sequence generates the ideal $\what{\a}$. By Theorem \ref{thm:240} we know
that $\what{\a}$ is a WPR ideal. Hence $\what{\ba}$ is a WPR sequence in
$\what{A}$. Let $\what{T} := \opn{Tel}(\what{A}; \what{\ba})$. Then, as above,
$M$ is a derived $\what{\a}$-adically complete complex of
$\what{A}$-modules if and only if
the homomorphism
$\tau^{\msp{2} \what{\ba}}_{M} : M \to
\opn{Hom}_{\what{A}}(\what{T}, M)$
in $\cat{C}_{\mrm{str}}(\what{A})$
is a quasi-isomorphism.

Now the obvious homomorphism $\what{A} \ot_A T \to \what{T}$
is an isomorphism in
$\cat{C}_{\mrm{str}}(\what{A})$. Hence
\begin{equation} \label{eqn:280}
\opn{Hom}_{\what{A}}(\what{T}, M) \cong
\opn{Hom}_{\what{A}}( \what{A} \ot_A T, M)
\cong \opn{Hom}_{A}(T, M)
\end{equation}
in $\cat{C}_{\mrm{str}}(A)$. Checking the definitions (see Section
\ref{sec:back-der})  we see that the diagram
\[ \begin{tikzcd} [column sep = 8ex, row sep = 6ex]
M
\ar[d, "{\tau^{\ba}_{M}}"']
\ar[dr, "{\tau^{\msp{2} \what{\ba}}_{M}}"]
\\
\opn{Hom}_{A}(T, M)
\ar[r, "{\simeq}"]
&
\opn{Hom}_{\what{A}}(\what{T}, M)
\end{tikzcd} \]
in $\cat{C}_{\mrm{str}}(A)$, in which the horizontal isomorphism is
(\ref{eqn:280}),  is commutative.
So $\tau^{\msp{2} \ba}_{M}$ is a quasi-isomorphism if and only if
$\tau^{\msp{2} \what{\ba}}_{M}$ is a quasi-isomorphism.

\medskip \noindent
(2) Consider the diagram of functors
\[ \begin{tikzcd} [column sep = 8ex, row sep = 6ex]
\cat{K}(\what{A} \msp{2})_{\tup{$\what{\a}$-kpr}}
\ar[d, "{\opn{Rest}}"']
\ar[r, "{\mrm{Q}}"]
&
\cat{D}(\what{A} \msp{2})_{\tup{$\what{\a}$-com}}
\ar[d, "{\opn{Rest}}"]
\\
\cat{K}(A \msp{2})_{\tup{$\a$-kpr}}
\ar[r, "{\mrm{Q}}"]
&
\cat{D}(A)_{\tup{$\a$-com}}
\end{tikzcd} \]
which is commutative up to a canonical isomorphism.
According to Proposition \ref{prop:430} and Theorem \ref{thm:135}, the
horizontal localization functors are equivalences. By Proposition
\ref{prop:453}, the restriction functor on the left is an isomorphism of
categories. Hence the restriction functor on the right is an equivalence of
categories.
\end{proof}

We end this section with two results of a practical nature.
{\em Adic flatness} was introduced in Definition \ref{dfn:260}; recall
that other texts, including \cite{BS}, use the adjective {\em complete
flatness}.

\begin{thm} \label{thm:510}
Let $A \to B$ be a flat ring homomorphism, and let $M$ be a flat $B$-module.
Let $\a \sub A$ be a weakly proregular ideal, and define the ideal
$\b := B \cd \a \sub B$. Let $\wh{B}$ be the $\b$-adic completion of $B$, with
ideal $\wh{\b} := \wh{B} \cd \b \sub B$, and let
$\wh{M}$ be the $\b$-adic completion of $M$.
Then $\wh{M}$ is a $\wh{\b}$-adically flat $\wh{B}$-module.
\end{thm}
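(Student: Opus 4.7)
The strategy is to reduce the claim to criterion (iii) of Theorem \ref{thm:502}, applied to the ring $\wh{B}$, the ideal $\wh{\b}$, and the module $\wh{M}$. Thus it suffices to show that $\wh{\b}$ is weakly proregular in $\wh{B}$, that $\wh{M}$ is $\wh{\b}$-adically complete, and that $\wh{B}_k \ot_{\wh{B}} \wh{M}$ is flat over $\wh{B}_k := \wh{B}/\wh{\b}^{\msp{2} k+1}$ for each $k \geq 0$.

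First I would establish the weak proregularity of $\wh{\b}$: since $A \to B$ is flat and $\a$ is WPR in $A$, Lemma \ref{lem:510} gives that $\b = B \cd \a$ is WPR in $B$; then Theorem \ref{thm:240} applied to the pair $(B, \b)$ yields that $\wh{\b} = \wh{B} \cd \b$ is WPR in $\wh{B}$. The module $\wh{M}$ is $\b$-adically complete over $B$ by construction, and under Convention \ref{conv:280} this is the same as being $\wh{\b}$-adically complete as a $\wh{B}$-module. So the hypotheses of Theorem \ref{thm:502} are in place.

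Next I would check condition (iii) by identifying $\wh{B}_k \ot_{\wh{B}} \wh{M}$ with $B_k \ot_B M$, where $B_k := B / \b^{k+1}$. By Convention \ref{conv:280} the canonical ring homomorphism $B_k \to \wh{B}_k$ is bijective, so the two quotient rings coincide. Proposition \ref{prop:440}(1), applied to the $B$-module $M$ with its completion $\wh{M}$, gives a canonical isomorphism $B_k \ot_B M \iso B_k \ot_B \wh{M} = \wh{B}_k \ot_{\wh{B}} \wh{M}$. Since $M$ is flat over $B$, the module $B_k \ot_B M$ is flat over $B_k$, and hence $\wh{B}_k \ot_{\wh{B}} \wh{M}$ is flat over $\wh{B}_k$ for every $k$. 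Invoking the implication (iii) $\Rightarrow$ (i) of Theorem \ref{thm:502}, we conclude that $\wh{M}$ is $\wh{\b}$-adically flat over $\wh{B}$.

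The main substantive input is Theorem \ref{thm:240}, which is the only place where weak proregularity is truly non-trivial to preserve; without it, the implication (iii) $\Rightarrow$ (i) of Theorem \ref{thm:502} over $\wh{B}$ would not be available. Everything else — the identification of $\wh{B}_k$ with $B_k$ and of $\wh{B}_k \ot_{\wh{B}} \wh{M}$ with $B_k \ot_B M$, and the passage from flatness of $M$ to flatness of its base change — is a routine unwinding via Proposition \ref{prop:440}(1) and Convention \ref{conv:280}. So I expect the write-up to be short once the above references are assembled.
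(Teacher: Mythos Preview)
Your proposal is correct and follows essentially the same route as the paper: both establish that $\wh{\b}$ is WPR via Lemma \ref{lem:510} and Theorem \ref{thm:240}, identify $\wh{B}_k \ot_{\wh{B}} \wh{M}$ with $B_k \ot_B M$ via Proposition \ref{prop:440}(1), and then conclude adic flatness from the flat $\wh{\b}$-adic system $\{M_k\}$. The only cosmetic difference is that the paper invokes \cite[Theorem 6.9]{Ye4} directly for the last step, whereas you invoke the implication (iii) $\Rightarrow$ (i) of Theorem \ref{thm:502}, whose proof is precisely that citation.
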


\begin{proof}
By Lemma \ref{lem:510} the ideal $\b \sub B$ is WPR. By Theorem \ref{thm:240}
the ideal $\wh{\b} \sub \wh{B}$ is WPR.
For every $i \geq 0$ let
$B_i := B / \b^{i + 1}$ and $M_i := B_i \ot_B M$; so
$\{ M_i \}_{i \geq 0}$ is a flat $\b$-adic system of $B$-modules.
Now according to Proposition \ref{prop:440}(1) the canonical homomorphism
$B_i \to B_i \ot_B \wh{B} \cong \wh{B} / \wh{\b}^{i + 1}$ is bijective.
We see that $\{ M_i \}_{i \geq 0}$ is a flat $\wh{\b}$-adic system of
$\wh{B}$-modules.
Lastly, according to \cite[Theorem 6.9]{Ye4} the $\wh{B}$-module
$\wh{M} = \lim_{\lto i} \msp{1} M_i$ is
a $\wh{\b}$-adically flat $\wh{B}$-module.
\end{proof}

\begin{rem} \label{rem:510}
We do not know whether Theorem \ref{thm:510} can be made stronger  by
asserting that $\wh{M}$ is a {\em flat} $\wh{B}$-module.
If the ring $\wh{B}$ happens to be noetherian, then this is true, by
\cite[Theorem 1.5]{Ye4}. Hence this rules out
\cite[Theorem 7.2]{Ye4} from being a counterexample.
\end{rem}

\begin{cor} \label{cor:510}
Let $A \to B \to C$ be flat ring homomorphisms, with $A$ noetherian.
Given an ideal $\a \sub A$, let $\b := B \cd \a \sub B$ and
$\c := C \cd \a \sub C$ be the induced ideals, and let
$\wh{B}$ and $\wh{C}$ be the corresponding completions of $B$ and $C$.
Define the ideal $\wh{\b} := \wh{B} \cd \b \sub \wh{B}$.
Then $\wh{C}$ is $\wh{\b}$-adically flat over $\wh{B}$.
\end{cor}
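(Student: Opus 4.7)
The plan is to deduce the corollary directly from Theorem \ref{thm:510}, applied to the flat ring map $A \to B$ with flat $B$-module $M := C$. Since $A$ is noetherian, every ideal of $A$ is WPR; in particular, $\a \sub A$ is WPR. So the hypotheses of Theorem \ref{thm:510} are satisfied, and the conclusion is that the $\b$-adic completion of $C$, viewed as a $\wh{B}$-module, is $\wh{\b}$-adically flat over $\wh{B}$.

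It therefore remains only to identify the $\b$-adic completion of the $B$-module $C$ with the ring $\wh{C}$ as we have defined it. Here the point is that $\b \cd C = (B \cd \a) \cd C = C \cd \a = \c$, so $\b^{k + 1} \cd C = \c^{k + 1}$ as ideals in $C$ for every $k \in \N$. Consequently
\[ \lim_{\lto k} \msp{2} (B / \b^{k + 1}) \ot_B C \cong
\lim_{\lto k} \msp{2} C / \c^{k + 1} = \wh{C} , \]
so the $\b$-adic completion of the $B$-module $C$ is canonically isomorphic (as a ring, in fact) to $\wh{C}$. Combining this identification with the conclusion of Theorem \ref{thm:510} gives exactly the statement that $\wh{C}$ is $\wh{\b}$-adically flat over $\wh{B}$.

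There is essentially no obstacle here, since the work is already done in Theorem \ref{thm:510}; the only thing to verify is the coincidence of the two adic topologies on $C$, which is immediate from $\b \cd C = \c$. The noetherian hypothesis on $A$ enters in only one place, namely to ensure that $\a$ is WPR so that Theorem \ref{thm:510} is applicable.
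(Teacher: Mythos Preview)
Your proof is correct and follows exactly the same approach as the paper's own proof: note that $\a$ is WPR because $A$ is noetherian, observe that $\wh{C}$ coincides with the $\b$-adic completion of $C$ (since $\b \cd C = \c$), and then apply Theorem~\ref{thm:510} with $M := C$. You have merely spelled out the identification of the two completions in slightly more detail than the paper does.
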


\begin{proof}
The ideal $\a$ is WPR because $A$ is noetherian.
The ring $\wh{C}$ is also the $\b$-adic completion of $C$.
Now use the theorem, with $M := C$.
\end{proof}


\end{document}